\newtheorem{theorem}{Theorem}[section]
\newtheorem{proposition}[theorem]{Proposition}
\newtheorem{lemma}[theorem]{Lemma}
\newtheorem{corollary}[theorem]{Corollary}
\theoremstyle{definition}
\newtheorem{definition}[theorem]{Definition}
\theoremstyle{remark}
\newtheorem{remark}[theorem]{Remark}
\newtheorem{example}[theorem]{Example}
\newcommand{\sDelta}{\boldsymbol{\Delta}} 
\newcommand{\StirlingII}[2]{\genfrac{\{}{\}}{0pt}{}{#1}{#2}}
\DeclareMathOperator{\rank}{rank}
\begin{document}
\begin{frontmatter}

\title{Diagonal Simplicial Tensor Modules and Algebraic \texorpdfstring{$n$}{n}-Hypergroupoids}

\author[cuny]{Florian Lengyel}
\affiliation[cuny]{organization={The City University of New York},
                  state={NY},
                  country={USA}}

\begin{abstract}
Let $A$ be a commutative ring, let $k\in\mathbb{Z}^+$, and let
$\vec{s}=(n_1,\dots,n_k)\in(\mathbb{Z}^+)^k$ with $n=\min_a(n_a)-1$.
We attach to $\vec{s}$ a diagonal simplicial tensor module $X_\bullet(\vec{s};A)$
whose $p$-simplices are functions on a cosimplicial index set
$I_p(\vec{s})\subseteq \mathbb{N}^k$.
This extends Quillen's diagonal on double semi-simplicial groups:
$X_\bullet(\vec{s};A)$ is obtained by restricting a $k$-fold simplicial
$A$-module along the diagonal $p\mapsto(p,\ldots,p)$.

Using a ``missing indices'' description of face kernels, we compute the horn
kernels $R_{p,j}(X)$ and show that $R_{p,j}(X)\neq 0$ if and only if $k\ge p$,
independently of $j$.
Consequently, $X_\bullet(\vec{s};A)$ is an algebraic $n$-hypergroupoid in the
sense of Duskin (1979) and Glenn (1982) if and only if $k\le n$, and horn fillers in dimension
$n$ are non-unique if and only if $k\ge n$; in particular it is strict
precisely when $k=n$.
A Horn Non-Degeneracy Lemma shows that, for $p\ge 1$,
$R_{p,j}(X)\cap D_p(X)=\{0\}$ and yields a decomposition
$X_p=R_{p,j}(X)\oplus D_p(X)$.
An explicit shift-and-truncate chain homotopy, equivariant under
$\operatorname{Stab}(\vec{s})$ and compatible with a natural filtration,
contracts $X_\bullet(\vec{s};A)$ and forces the associated spectral sequence to
collapse at $E_1$.

When $A$ is an infinite field $K$, we study simplicial submodules generated by a
single tensor via kernel sequences and a moduli map to a product of
Grassmannians. The moduli map image is an irreducible and unirational constructible subset
of a determinantal incidence variety.
\end{abstract}

\begin{keyword}
diagonal simplicial tensor modules \sep horn kernels \sep strict algebraic $n$-hypergroupoids \sep incidence varieties \sep moduli spaces
\smallskip
\MSC[2020] Primary 18G30; Secondary 55U10, 14M15, 05E45, 15A69.
\end{keyword}

\end{frontmatter}

\section{Introduction}
\label{sec:intro}

Let $A$ be a commutative ring and $k\in\mathbb{Z}^+$.
Fix a \textbf{shape} $\vec{s}=(n_1,\ldots,n_k)\in(\mathbb{Z}^+)^k$ and set $n=\min(\vec{s})-1$.
We construct the diagonal simplicial tensor module $X_\bullet(\vec{s};A)$ (DSTM), a simplicial $A$-module
whose $p$-simplices are functions on cosimplicial index sets $I_p(\vec{s})\subseteq\mathbb{N}^k$.

This construction generalizes Quillen's diagonal on a double semi-simplicial group~\cite{Quillen1966}:
it is the restriction $p\mapsto(p,\ldots,p)$ of a $k$-fold simplicial $A$-module in $\mathbf{Mod}_A$
(Remark~\ref{rem:kfold}), and is compatible with Dold--Kan normalization~\cite{Dold1958,Kan1958,Weibel}.

In particular, the diagonal simplicial module $X_\bullet(\vec{s};A)$ is obtained from
$X_\bullet(\vec{s};\mathbb{Z})$ by base change, and the normalized Moore complexes satisfy
\[
N_\bullet\bigl(X_\bullet(\vec{s};A)\bigr)
  \cong N_\bullet\bigl(X_\bullet(\vec{s};\mathbb{Z})\bigr)\otimes_{\mathbb{Z}} A
\]
for all commutative rings $A$.

We analyze the horn kernels
\[
R_{p,j}(X)=\bigcap_{i\neq j}\ker\bigl(d_i:X_p\to X_{p-1}\bigr),
\]
and show that their combinatorics controls the homotopy-theoretic behavior of the DSTM.
For the simplicial dimension $n=\min(\vec{s})-1$ we prove that $X_\bullet(\vec{s};A)$ is
a strict algebraic $n$-hypergroupoid in this sense if and only if the tensor order satisfies $k=n$.
The transition from non-unique to unique horn fillers occurs at $p=k$ (hence at $p=n$ in the strict case $k=n$).

We briefly outline the main aspects of the paper: combinatorics and classification of the DSTM,
and the algebraic geometry of generated submodules.

\subsection*{Combinatorics and Classification}

In the abelian setting of simplicial modules, horn fillers always exist (the Kan condition is automatic).
The obstruction to the uniqueness of fillers is captured by the horn kernel $R_{p,j}$.
Following terminology from Duskin and Glenn~\cite{Duskin1979,Glenn1982},
a simplicial module is a \textbf{strict algebraic $n$-hypergroupoid} if fillers are unique
in dimensions strictly greater than $n$ ($R_{p,j}=0$ for $p>n$), and non-unique in dimension $n$ ($R_{n,j}\ne 0$).

We characterize $R_{n,j}$ combinatorially via a basis of ``missing indices.''
The Horn Non-Degeneracy Lemma proves $R_{n,j}\cap D_n=\{0\}$, where $D_n$ denotes the degenerate submodule in degree $n$,
yielding the decomposition $X_n = R_{n,j} \oplus D_n$.
Counting missing indices gives exact rank formulas for $R_{p,j}$ via inclusion--exclusion on the product sets
$I_p = \prod_a [M_a(p)]$.
In the constant shape case $n_a=n+1$ for all $a$, so $M_a(p)=p$ and $I_p=[p]^k$, and these rank formulas
specialize to finite differences of $x^k$ and can be expressed using Stirling numbers of the second kind
$\StirlingII{k}{m}$~\cite{StanleyEC1}.
By analyzing when the horn kernel vanishes, we obtain our main classification theorem:
$X_\bullet(\vec{s};A)$ is a strict algebraic $n$-hypergroupoid if and only if $k=n$.

\subsection*{Algebraic Geometry of Generated Submodules}
When $A$ is an infinite field $K$, we work throughout with submodules generated by a single tensor $T\in X_n(\vec{s};K)$;
we do not attempt to parametrize arbitrary subcomplexes. Such a generated submodule $\langle T \rangle$ is
encoded by its kernel sequence
\[
K(T)_\bullet := (\ker f_{T,p})_p \subseteq K[\sDelta^n]_\bullet,
\]
where $f_T$ is the realization map from Section~\ref{sec:generated}.

We define a moduli map $\Psi$ from a Zariski-open locus $\mathcal{U} \subset X_n(\vec{s};K)$
to a product of Grassmannians by sending $T$ to the collection of subspaces $K(T)_p \subseteq K[\sDelta^n]_p$.
The image $\mathcal{M}(\vec{s})$ lies within a closed incidence subvariety defined by simplicial compatibility
conditions, which we show are linear in the Segre--Pl\"ucker coordinates.
This reduces the classification of these generated submodules to the geometry of these incidence conditions
and of the index collision maps $\mathcal{I}_p$.

\subsection*{Organization}

Section~\ref{sec:dstm} establishes notation and the DSTM construction.
Section~\ref{sec:horns} characterizes missing indices and the horn kernel basis.
Section~\ref{sec:normalization} develops the Moore filler algorithm and normalization.
Section~\ref{sec:combinatorics} establishes rank formulas and the classification theorem.
Section~\ref{sec:hornnondegeneracylemma} proves the Horn Non-Degeneracy Lemma and the
decomposition $X_n = R_{n,j} \oplus D_n$.
Section~\ref{sec:dichotomy} develops the homology dichotomy and the hypergroupoid classification.
Section~\ref{sec:generated} develops the algebraic geometry of generated submodules.
In \ref{app:filtration} we prove contractibility via an explicit equivariant homotopy.

\section{Diagonal simplicial tensor modules}
\label{sec:dstm}

Throughout, let $A$ be a commutative ring and write $\mathbf{Mod}_A$ for the category of $A$-modules.
Denote by $\mathbb{N}$ the set of nonnegative integers and by $\mathbb{Z}^+$ the set of positive integers.

\subsection{Simplicial preliminaries}

The \textbf{simplicial category} $\sDelta$ has objects $[p]:=\{0,\ldots,p\}$ for $p\in\mathbb{N}$; morphisms are nondecreasing maps.
It is generated by the \textbf{coface maps} $\delta_i^p:[p-1]\to[p]$ (the injection missing $i$) and the \textbf{codegeneracy maps} $\sigma_i^p:[p+1]\to[p]$ (the surjection repeating $i$).

A \textbf{simplicial $A$-module} is a functor $X_\bullet:\sDelta^{\mathrm{op}}\to\mathbf{Mod}_A$.
We write $X_p:=X([p])$. The induced maps are the \textbf{face maps} $d_i^p:=X(\delta_i^p):X_p\to X_{p-1}$ and the \textbf{degeneracy maps} $s_i^p:=X(\sigma_i^p):X_p\to X_{p+1}$.
The associated chain complex $(X_\bullet, \partial_\bullet)$ has boundary maps $\partial_p = \sum_{i=0}^p (-1)^i d_i^p$.

\subsection{Setup and index sets}

We now define the index sets underlying the diagonal simplicial tensor module.
Let $k\in\mathbb{Z}^+$. We analyze modules consisting of tensors of order $k$.
A \textbf{shape} is a $k$-tuple $\vec{s}=(n_1,\dots,n_k)\in(\mathbb{Z}^+)^k$.

We fix the shape $\vec{s}$ throughout and define its \textbf{simplicial dimension} as
\[
n=n(\vec{s}):=\min(n_1,\dots,n_k)-1.
\]

For any degree $p\ge 0$ and axis $1\le a\le k$, we define the index bounds:
\[
M_a(p):=n_a-1-n+p.
\]
Since $n_a-1 \ge n$, we have $M_a(p)\ge p$ for all $a$. The index set in degree $p$ is defined as
\[
I_p:=\prod_{a=1}^k [\,M_a(p)\,].
\]
Since $[p]\subseteq[\,M_a(p)\,]$ for every $a$, we have the inclusion $[p]^k\subseteq I_p$.

\subsection{\texorpdfstring{The Cosimplicial Index Set $I_\bullet$}{The Cosimplicial Index Set I-bullet}}

We define a functor $I_\bullet: \sDelta \to \mathbf{Set}$. On objects, $I_\bullet([p]) := I_p$. Note that $M_a(p\pm 1) = M_a(p)\pm 1$.

The structure maps are defined by applying the same simplicial generator in each tensor axis.

\begin{definition}[Cosimplicial Structure Maps]
The functor $I_\bullet$ maps the generators of $\sDelta$ as follows:

For $p\ge 1$ and $0\le i\le p$, the \textbf{index coface map} $\Delta_i^p: I_{p-1} \to I_p$ is
\[
\Delta_i^p := I_\bullet(\delta_i^p) := \prod_{a=1}^k \delta_i^{M_a(p)}.
\]
For $p\ge 0$ and $0\le i\le p$, the \textbf{index codegeneracy map} $\Sigma_i^p: I_{p+1} \to I_p$ is
\[
\Sigma_i^p := I_\bullet(\sigma_i^p) := \prod_{a=1}^k \sigma_i^{M_a(p)}.
\]
\end{definition}

Since $M_a(p)\ge p$, all factors are defined for the common index $i$. Because the standard maps $\delta_i, \sigma_i$ satisfy the cosimplicial identities, the product maps $\Delta_i^p, \Sigma_i^p$ also satisfy them componentwise, proving that $I_\bullet$ is a cosimplicial set.

\subsection{\texorpdfstring{The Diagonal Simplicial Module $X_\bullet(\vec{s};A)$}{The Diagonal Simplicial Module}}

We define the contravariant functor $A^{(-)}: \mathbf{Set} \to \mathbf{Mod}_A$. This functor maps a set $S$ to the $A$-module of functions $S\to A$, and a map of sets $f: S_1 \to S_2$ to the module homomorphism $A^f: A^{S_2} \to A^{S_1}$ defined by pre-composition ($T \mapsto T\circ f$).

\begin{definition}[Diagonal simplicial tensor module]
The \textbf{diagonal simplicial $A$-tensor module} $X_\bullet(\vec{s};A):\sDelta^{\mathrm{op}}\to\mathbf{Mod}_A$ is the composition $A^{(-)} \circ I_\bullet$.

On objects, $X_p(\vec{s};A):=A^{I_p}$. Since the index sets $I_p$ are finite, $X_p$ is a free $A$-module.

The face and degeneracy maps are induced by the corresponding maps in $I_\bullet$ via pre-composition:
\begin{align*}
d_i^p(T) &:= T\circ \Delta_i^p, \\
s_i^p(T) &:= T\circ \Sigma_i^p.
\end{align*}
\end{definition}

\begin{remark}\label{rem:kfold}
Define $\widetilde X:(\sDelta^{\mathrm{op}})^k\to\mathbf{Mod}_A$ by
\[
\widetilde X_{p_1,\ldots,p_k}:=A^{\prod_{a=1}^k [\,M_a(p_a)\,]}.
\]
In the $a$-th simplicial direction, the face and degeneracy maps are induced by precomposition
with $\delta_i^{M_a(p_a)}$ and $\sigma_i^{M_a(p_a)}$ on the $a$-th factor (identity on the others).
Then
\[
X_p(\vec s;A)=\widetilde X_{p,\ldots,p},
\]
so the structure maps of $X_\bullet(\vec s;A)$ are obtained by applying the same simplicial
operator in each direction.
\end{remark}

\begin{remark}[Matrices in the constant shape case]\label{rem:matrices-constant-shape}
If $k=2$ and $\vec s=(N,N)$, then $n=N-1$ and
\[
M_a(p) = n_a-1-n+p = p \qquad (a=1,2).
\]
Hence
\[
I_p = [p]^2
\qquad\text{and}\qquad
X_p(\vec s;A) = A^{[p]^2} \cong A^{(p+1)\times(p+1)}.
\]
In particular,
\[
X_{N-1}(\vec s;A) \cong A^{[N-1]^2} \cong A^{N\times N},
\]
so an $N\times N$ matrix may be viewed as a simplex in degree $N-1$ of $X_\bullet(\vec s;A)$.
\end{remark}

\subsection{Axis Symmetries and Permutations of Tensor Factors}

For a shape $\vec s=(n_1,\dots,n_k)$, define its stabilizer
\[
\operatorname{Stab}(\vec s)
 := \{\sigma\in S_k : n_{\sigma(a)} = n_a\ \text{for all }a\}.
\]
Each $\sigma\in\operatorname{Stab}(\vec s)$ acts (on the left) on the index sets $I_p$ by
permuting coordinates:
\[
\sigma\cdot (m_1,\dots,m_k)
 := (m_{\sigma^{-1}(1)},\dots,m_{\sigma^{-1}(k)}).
\]
This induces a (left) action on the diagonal simplicial module
$X_\bullet(\vec s;A)$ by precomposition:
\[
(\sigma\cdot T)(m) := T(\sigma^{-1}\cdot m),
 \qquad T\in X_p(\vec s;A),\ m\in I_p.
\]

\begin{lemma}[Equivariance under $\operatorname{Stab}(\vec s)$]\label{lem:equivariance-stab}
The action of $\operatorname{Stab}(\vec s)$ on $I_\bullet$ commutes with the cosimplicial structure maps. Consequently, for every $\sigma\in\operatorname{Stab}(\vec s)$ and all $p,i$ we have
\[
d_i^p(\sigma\cdot T) = \sigma\cdot d_i^p(T),
 \qquad
s_i^p(\sigma\cdot T) = \sigma\cdot s_i^p(T).
\]
The boundary operators $\partial_p$ also commute with the $\operatorname{Stab}(\vec s)$-action.
\end{lemma}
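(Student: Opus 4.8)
The plan is to prove the statement first at the level of the cosimplicial index set $I_\bullet$ and then transport it to $X_\bullet(\vec s;A)$ through the functor $A^{(-)}$; the whole argument is essentially a naturality statement, so I expect no genuine obstacle beyond bookkeeping. For $\sigma\in\operatorname{Stab}(\vec s)$ the defining condition $n_{\sigma(a)}=n_a$ gives $M_{\sigma(a)}(p)=n_{\sigma(a)}-1-n+p=n_a-1-n+p=M_a(p)$ for every $p$ and $a$, equivalently $M_{\sigma^{-1}(b)}(p)=M_b(p)$. Hence the coordinate permutation $P_\sigma\colon(m_1,\dots,m_k)\mapsto(m_{\sigma^{-1}(1)},\dots,m_{\sigma^{-1}(k)})$ carries $I_p=\prod_a[M_a(p)]$ into itself, since its $b$-th output $m_{\sigma^{-1}(b)}$ lies in $[M_{\sigma^{-1}(b)}(p)]=[M_b(p)]$; it is a bijection with inverse $P_{\sigma^{-1}}$, and $\sigma\mapsto P_\sigma$ is a left action on each $I_p$. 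The first sentence of the lemma is then the assertion that these bijections assemble into a natural automorphism of the functor $I_\bullet\colon\Delta\to\mathbf{Set}$, i.e.\ that $P_\sigma$ commutes with every $\Delta_i^p$ and $\Sigma_i^p$.

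Next I would check this commutation by a direct coordinatewise computation, which is where the ``diagonal'' definition of the structure maps does the work. Fix $m\in I_{p-1}$: the $a$-th coordinate of $\Delta_i^p(m)$ is $\delta_i^{M_a(p)}(m_a)$, so the $b$-th coordinate of $(P_\sigma\circ\Delta_i^p)(m)$ is $\delta_i^{M_{\sigma^{-1}(b)}(p)}(m_{\sigma^{-1}(b)})$, while the $b$-th coordinate of $(\Delta_i^p\circ P_\sigma)(m)$ is $\delta_i^{M_b(p)}(m_{\sigma^{-1}(b)})$. Since $M_{\sigma^{-1}(b)}(p)=M_b(p)$ these are literally the same map applied to the same argument, so $P_\sigma\circ\Delta_i^p=\Delta_i^p\circ P_\sigma$ as maps $I_{p-1}\to I_p$; the identical argument with $\sigma_i^{M_a(p)}$ replacing $\delta_i^{M_a(p)}$ gives $P_\sigma\circ\Sigma_i^p=\Sigma_i^p\circ P_\sigma$. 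Conceptually: applying one and the same standard generator of $\Delta$ in every tensor slot commutes with permuting the slots, once the matching of bounds guarantees the generators in distinct slots coincide after the permutation — and that matching is exactly the $\operatorname{Stab}(\vec s)$ hypothesis. Thus $I_\bullet$ upgrades to a cosimplicial object in $\operatorname{Stab}(\vec s)$-sets.

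Finally I would apply the contravariant functor $A^{(-)}$. Writing $\phi^{*}(T):=T\circ\phi$ for a map of finite sets $\phi$, the $\operatorname{Stab}(\vec s)$-action on $X_p=A^{I_p}$ is $\sigma\cdot T=T\circ P_{\sigma^{-1}}=(P_{\sigma^{-1}})^{*}T$, while $d_i^p=(\Delta_i^p)^{*}$ and $s_i^p=(\Sigma_i^p)^{*}$. Using the commutation of the previous paragraph (with $P$ read on the appropriate index set), $d_i^p(\sigma\cdot T)=(T\circ P_{\sigma^{-1}})\circ\Delta_i^p=T\circ(P_{\sigma^{-1}}\circ\Delta_i^p)=T\circ(\Delta_i^p\circ P_{\sigma^{-1}})=(T\circ\Delta_i^p)\circ P_{\sigma^{-1}}=\sigma\cdot d_i^p(T)$, and likewise $s_i^p(\sigma\cdot T)=\sigma\cdot s_i^p(T)$. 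Since each $\sigma$ acts $A$-linearly on $X_\bullet$ and $\partial_p=\sum_{i=0}^p(-1)^i d_i^p$, commutation with every $d_i^p$ yields $\partial_p(\sigma\cdot T)=\sigma\cdot\partial_p(T)$ at once. The only point that requires care is the verification that $P_\sigma$ actually maps $I_p$ to $I_p$ and that the generators in different slots of $\Delta_i^p$, $\Sigma_i^p$ agree after permuting — both of which are precisely what membership in $\operatorname{Stab}(\vec s)$ buys — and the rest is formal functoriality.
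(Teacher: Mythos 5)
Your proposal is correct and follows essentially the same route as the paper's proof: establish $M_{\sigma^{-1}(a)}(p)=M_a(p)$ from the stabilizer hypothesis, verify coordinatewise that the permutation action commutes with $\Delta_i^p$ and $\Sigma_i^p$ on $I_\bullet$, and then transport the commutation to $X_\bullet$ by precomposition, handling $\partial_p$ by linearity. The only additions (explicitly checking that $P_\sigma$ maps $I_p$ to itself, and phrasing the result as a natural automorphism of $I_\bullet$) are harmless elaborations of what the paper leaves implicit.
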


\begin{proof}
Let $\sigma \in \operatorname{Stab}(\vec{s})$. By definition, $n_a = n_{\sigma(a)}$, which implies $n_a = n_{\sigma^{-1}(a)}$. This ensures $M_a(p) = M_{\sigma^{-1}(a)}(p)$ for all $a, p$.

We first verify the commutation on the index sets. Let $m \in I_{p-1}$. We show $\sigma \cdot \Delta_i^p(m) = \Delta_i^p(\sigma \cdot m)$. The $a$-th component of the left side is
\[
(\sigma \cdot \Delta_i^p(m))_a = (\Delta_i^p(m))_{\sigma^{-1}(a)} = \delta_i^{M_{\sigma^{-1}(a)}(p)}(m_{\sigma^{-1}(a)}).
\]
The $a$-th component of the right side is
\[
(\Delta_i^p(\sigma \cdot m))_a = \delta_i^{M_a(p)}((\sigma \cdot m)_a) = \delta_i^{M_a(p)}(m_{\sigma^{-1}(a)}).
\]
Since $M_a(p) = M_{\sigma^{-1}(a)}(p)$, the components are equal. Thus $\sigma \circ \Delta_i^p = \Delta_i^p \circ \sigma$. The argument for $\Sigma_i^p$ is analogous.

Now we verify the equivariance on $X_p$. Let $T \in X_p$.
\begin{align*}
d_i^p(\sigma\cdot T) &= (\sigma\cdot T) \circ \Delta_i^p \\
 &= (T \circ \sigma^{-1}) \circ \Delta_i^p \\
 &= T \circ (\sigma^{-1} \circ \Delta_i^p) \\
 &= T \circ (\Delta_i^p \circ \sigma^{-1}) \quad (\text{by commutativity on } I_\bullet)\\
 &= (T \circ \Delta_i^p) \circ \sigma^{-1} \\
 &= \sigma \cdot (T \circ \Delta_i^p) = \sigma \cdot d_i^p(T).
\end{align*}
The proof for $s_i^p$ is analogous. The statement for $\partial_p$ follows by linearity.
\end{proof}

\begin{remark}[Constant shape and symmetry subcomplexes]
Suppose $\vec s=(N,\dots,N)$ is constant. Then
$\operatorname{Stab}(\vec s)\cong S_k$ acts on $X_\bullet(\vec s;A)$ by
permuting tensor axes. By Lemma~\ref{lem:equivariance-stab}, the symmetric and alternating subspaces defined below are preserved by all structure maps and hence form simplicial $A$-submodules.

In particular, the symmetric and alternating subspaces
\[
X_\bullet^{\mathrm{Sym}}
 := \{T : \sigma\cdot T = T\ \forall\sigma\in S_k\},
 \qquad
X_\bullet^{\mathrm{Alt}}
 := \{T : \sigma\cdot T = \operatorname{sgn}(\sigma)\,T\}
\]
form simplicial $A$-submodules. (If $k!$ is invertible in $A$, these are direct summands.) In
\ref{app:filtration} we show that the global contracting homotopy
$H$ is $\operatorname{Stab}(\vec s)$-equivariant, so it restricts to
chain contractions on $X_\bullet^{\mathrm{Sym}}$ and $X_\bullet^{\mathrm{Alt}}$.
Hence these symmetry subcomplexes are also contractible.
\end{remark}

\begin{remark}[Matrix and Hermitian transpose]
For $k=2$ and $\vec s=(N,N)$, the nontrivial element $\tau \in S_2$ acts by swapping the tensor axes, corresponding to the transpose: $(\tau\cdot T)(m_1, m_2) = T(m_2, m_1)$. By Lemma~\ref{lem:equivariance-stab}, all structure maps and the boundary operators $\partial_p$ commute with this action.

Over $K=\mathbb C$, viewing $X_\bullet(\vec s;K)$ as a simplicial
$\mathbb R$-module, complex conjugation commutes with all structure maps. The Hermitian conjugate action (transpose combined with conjugation) also commutes with $d_i^p,s_i^p$ and $\partial_p$ (as $\mathbb R$-linear operators). The
real subspaces of symmetric, skew-symmetric, Hermitian, and skew-Hermitian
tensors form simplicial $\mathbb R$-submodules, and the global
contracting homotopy $H$ restricts to each of them.
\end{remark}

\subsection{Base Change and Realization Maps}

\begin{proposition}[Base change for $X_\bullet(\vec{s};-)$]\label{prop:base-change}
For any ring homomorphism $\varphi: A \to B$ and shape $\vec{s}$, there is a canonical isomorphism of simplicial $B$-modules
\[
X_\bullet(\vec{s};A) \otimes_A B \;\xrightarrow[\cong]{\theta_\bullet}\; X_\bullet(\vec{s};B).
\]
\end{proposition}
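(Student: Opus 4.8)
The plan is to prove the statement levelwise and then promote it to a morphism of simplicial modules using functoriality. Recall that $X_p(\vec s;A)=A^{I_p}$ with structure maps given by precomposition with the cosimplicial index maps $\Delta_i^p,\Sigma_i^p$, and that $X_\bullet(\vec s;A)\otimes_A B$ means the simplicial $B$-module obtained by applying the functor $-\otimes_A B\colon\mathbf{Mod}_A\to\mathbf{Mod}_B$ in each degree. The whole proposition then reduces to a natural isomorphism, on finite sets, between the functor $S\mapsto A^{S}\otimes_A B$ and the functor $S\mapsto B^{S}$.

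For a finite set $S$ I would define
\[
\theta_S\colon A^{S}\otimes_A B\longrightarrow B^{S},\qquad
\theta_S(f\otimes b)(s):=\varphi\bigl(f(s)\bigr)\,b,
\]
extended $B$-linearly; this is well defined because $(f,b)\mapsto\bigl(s\mapsto\varphi(f(s))b\bigr)$ is $A$-balanced, since $\varphi\bigl((af)(s)\bigr)b=\varphi(f(s))(\varphi(a)b)$. Because $S$ is finite, $A^{S}$ is free over $A$ on the indicator functions $(e_s)_{s\in S}$, so $A^{S}\otimes_A B$ is free over $B$ on $(e_s\otimes 1)_{s\in S}$, and $\theta_S(e_s\otimes1)=e_s\in B^{S}$. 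Thus $\theta_S$ sends a $B$-basis to a $B$-basis and is an isomorphism. Applying this with $S=I_p$ — finite, since each $M_a(p)=n_a-1-n+p$ is finite — yields $B$-linear isomorphisms $\theta_p:=\theta_{I_p}\colon X_p(\vec s;A)\otimes_A B\xrightarrow{\ \cong\ }X_p(\vec s;B)$.

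Next I would verify that $\theta$ is natural in the set variable: for any map of finite sets $g\colon S_1\to S_2$, the square formed by $\theta_{S_1}$, $\theta_{S_2}$, the map $A^{g}\otimes_A B$, and $B^{g}$ commutes, since both composites send $f\otimes b$ to $s_1\mapsto\varphi\bigl(f(g(s_1))\bigr)b$. Specializing $g$ to the index coface and codegeneracy maps $\Delta_i^p,\Sigma_i^p$ shows that the $\theta_p$ intertwine the face and degeneracy maps of $X_\bullet(\vec s;A)\otimes_A B$ with those of $X_\bullet(\vec s;B)$; hence $\theta_\bullet$ is a morphism of simplicial $B$-modules, and being a levelwise isomorphism it is an isomorphism of simplicial $B$-modules. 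Canonicity is then visible: $\theta_\bullet$ is natural in $\varphi$ (and in $\vec s$ along shape-preserving reindexings), being built from the evident evaluation-and-scalar formula.

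The only substantive input is the finiteness of the index sets $I_p$: for an infinite set $S$ the comparison map $A^{S}\otimes_A B\to B^{S}$ need not be surjective, so the basis argument would fail. I do not expect any other obstacle; the remaining content — that $-\otimes_A B$ is a functor carrying simplicial $A$-modules to simplicial $B$-modules, and that precomposition operators are strictly compatible with $\theta$ — is formal. (Taking $A=\mathbb Z$ with $\varphi$ the unique ring map to $B$ and passing to normalized Moore complexes recovers the base-change identity $N_\bullet(X_\bullet(\vec s;A))\cong N_\bullet(X_\bullet(\vec s;\mathbb Z))\otimes_{\mathbb Z}A$ quoted in the introduction.)
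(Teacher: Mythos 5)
Your proof follows essentially the same route as the paper's: define $\theta_p$ on the indicator-function basis of $A^{I_p}$ (your formula $\theta_S(f\otimes b)(s)=\varphi(f(s))b$ agrees with the paper's $E_m\otimes b\mapsto b\cdot E_m$ on basis elements), use finiteness of $I_p$ to get a levelwise $B$-module isomorphism, and observe that both simplicial modules have structure maps given by precomposition with the same cosimplicial maps $\Delta_i^p,\Sigma_i^p$, so $\theta_\bullet$ is a simplicial morphism. Your extra care in verifying $A$-balancedness and phrasing the compatibility as a naturality statement for $A^{(-)}\otimes_A B\Rightarrow B^{(-)}$ on finite sets is a slightly cleaner packaging of the same argument, not a different one.
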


\begin{proof}
Each $X_p(\vec{s};A) = A^{I_p}$ is a finite free $A$-module with basis $\{E_m\}_{m \in I_p}$ (indicator functions). Define
\[
\theta_p: X_p(\vec{s};A) \otimes_A B \longrightarrow X_p(\vec{s};B), \qquad \theta_p(E_m \otimes b) = b \cdot E_m.
\]
Since $I_p$ is finite, $(A^{I_p}) \otimes_A B \cong B^{I_p}$, so each $\theta_p$ is a $B$-module isomorphism.

Simplicial compatibility follows because the face and degeneracy maps
are given by precomposition with the same maps
$\Delta_i^p,\Sigma_i^p$ on $I_\bullet$ in both
$X_\bullet(\vec{s};A)$ and $X_\bullet(\vec{s};B)$, so each $\theta_p$
commutes with all $d_i^p$ and $s_i^p$.
\end{proof}

\begin{corollary}[Naturality and flat base change]\label{cor:base-change-naturality}
Let $T \in X_n(\vec{s};A)$, and let $f_T: A[\sDelta^n] \to X_\bullet(\vec{s};A)$ be the corresponding realization map (defined in Section~\ref{sec:generated}). Let $\varphi: A \to B$ be a ring homomorphism, and let $T_B := \theta_n(T \otimes 1) \in X_n(\vec{s};B)$. Let $\varphi_p: A[\sDelta^n]_p \otimes_A B \xrightarrow{\cong} B[\sDelta^n]_p$ denote the canonical isomorphism.

\begin{enumerate}
    \item (Naturality) The following diagram commutes for all $p$:
\[
\begin{tikzcd}
A[\sDelta^n]_p \otimes_A B \arrow[r, "f_{T,p} \otimes 1_B"] \arrow[d, "\varphi_p"', "\cong"] & X_p(\vec{s};A) \otimes_A B \arrow[d, "\theta_p"', "\cong"] \\
B[\sDelta^n]_p \arrow[r, "f_{T_B,p}"'] & X_p(\vec{s};B)
\end{tikzcd}
\]
    \item (Kernel preservation) If $B$ is a flat $A$-module, the kernel sequence is preserved:
\[
(\ker f_{T,p}) \otimes_A B \;\cong\; \ker(f_{T_B,p}) \qquad (\forall p).
\]
\end{enumerate}
\end{corollary}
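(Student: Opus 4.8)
The plan is to reduce both assertions to the explicit description of the realization map on a free basis and then to feed in the simplicial naturality of $\theta_\bullet$ from Proposition~\ref{prop:base-change}; flatness is used only at the very end, for part~(2). Throughout write $X=X_\bullet(\vec s;A)$ and $X'=X_\bullet(\vec s;B)$.

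For part~(1), recall that $A[\Delta^n]_p$ is the free $A$-module on $\Hom_\Delta([p],[n])$ and that the realization map is determined on this basis by $f_{T,p}(\alpha)=X(\alpha)(T)$, i.e.\ precomposition of $T\colon I_n\to A$ with the cosimplicial map $I_\bullet(\alpha)\colon I_p\to I_n$; similarly $f_{T_B,p}(\alpha)=X'(\alpha)(T_B)$. Since $\varphi_p$ sends the basis element $\alpha\otimes 1$ to $\alpha\in B[\Delta^n]_p$, and all four maps in the square are $B$-linear, it suffices to check that $\theta_p$ carries $X(\alpha)(T)\otimes 1$ to $X'(\alpha)(T_B)$ for each $\alpha\colon[p]\to[n]$. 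Factoring $\alpha$ into coface and codegeneracy maps and using that $\theta_\bullet$ commutes with every $d_i$ and $s_i$ (Proposition~\ref{prop:base-change}), one gets the identity $\theta_p\circ\bigl(X(\alpha)\otimes 1_B\bigr)=X'(\alpha)\circ\theta_n$ of maps $X_n(\vec s;A)\otimes_A B\to X_p(\vec s;B)$. Evaluating this on $T\otimes 1$ and using $T_B=\theta_n(T\otimes 1)$ gives $\theta_p\bigl(X(\alpha)(T)\otimes 1\bigr)=X'(\alpha)(T_B)=f_{T_B,p}(\alpha)$, which is commutativity on generators, hence on all of $A[\Delta^n]_p\otimes_A B$.

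For part~(2), assume $B$ is flat over $A$. Applying $-\otimes_A B$ to the exact sequence $0\to\ker f_{T,p}\to A[\Delta^n]_p\xrightarrow{f_{T,p}}\operatorname{im}f_{T,p}\to 0$ keeps it exact, so $(\ker f_{T,p})\otimes_A B$ is identified inside $A[\Delta^n]_p\otimes_A B$ with $\ker(f_{T,p}\otimes 1_B)$. By part~(1) we have $f_{T_B,p}\circ\varphi_p=\theta_p\circ(f_{T,p}\otimes 1_B)$ with $\theta_p,\varphi_p$ isomorphisms, whence $\ker f_{T_B,p}=\varphi_p\bigl(\ker(f_{T,p}\otimes 1_B)\bigr)=\varphi_p\bigl((\ker f_{T,p})\otimes_A B\bigr)$; transporting along $\varphi_p$ yields the asserted isomorphism $(\ker f_{T,p})\otimes_A B\cong\ker(f_{T_B,p})$.

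I do not expect a serious obstacle here: the one point demanding care is pinning down the realization map precisely, namely that $f_T$ is the $A$-linear extension of $\alpha\mapsto X(\alpha)(T)$ on the basis $\Hom_\Delta([p],[n])$ of $A[\Delta^n]_p$ (this is how it will be set up in Section~\ref{sec:generated}). Once that is in hand, part~(1) is a diagram chase powered entirely by Proposition~\ref{prop:base-change}, and part~(2) is the standard exactness of flat base change combined with the fact that $\theta_p$ and $\varphi_p$ are isomorphisms. The only thing worth double-checking is that the factorization of a general $\alpha$ into generators is handled compatibly on the $A$- and $B$-sides, but this is immediate from functoriality of $X$, $X'$ and naturality of $\theta_\bullet$.
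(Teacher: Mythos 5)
Your proof is correct and follows essentially the same route as the paper: part~(1) reduces to checking commutativity on the basis $\Hom_\Delta([p],[n])$ and uses the simplicial naturality of $\theta_\bullet$ from Proposition~\ref{prop:base-change}, and part~(2) is the standard flat base change argument combined with transporting along the isomorphisms $\varphi_p$ and $\theta_p$. The only difference is that you spell out the key basis computation (that $\theta_p(X(\alpha)(T)\otimes 1)=X'(\alpha)(T_B)$) by explicitly factoring $\alpha$ into generators, whereas the paper asserts the commutativity tersely.
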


\begin{proof}
(1) Let $\alpha \in \sDelta([p],[n])$ be a basis element of $A[\sDelta^n]_p$. Both paths in the diagram map the element $\alpha \otimes b$ to $b \cdot f_{T_B,p}(\alpha)$, establishing commutativity.

(2) Consider the exact sequence $0 \to \ker f_{T,p} \to A[\sDelta^n]_p \xrightarrow{f_{T,p}} X_p(\vec{s};A)$. If $B$ is a flat $A$-module, the functor $(-) \otimes_A B$ is exact. Applying it yields the exact sequence
\[
0 \to (\ker f_{T,p}) \otimes_A B \to A[\sDelta^n]_p \otimes_A B \xrightarrow{f_{T,p} \otimes 1_B} X_p(\vec{s};A) \otimes_A B.
\]
This implies $(\ker f_{T,p}) \otimes_A B \cong \ker(f_{T,p} \otimes 1_B)$. By the commutativity of the diagram in (1) and the fact that $\varphi_p$ and $\theta_p$ are isomorphisms, $\ker(f_{T,p} \otimes 1_B)$ is isomorphic to $\ker(f_{T_B,p})$.
\end{proof}

\section{Horns, Kernels, and Missing Indices}
\label{sec:horns}

We investigate the structure of face kernels and the uniqueness of fillers for horns in $X_\bullet(\vec{s};A)$.

\subsection{Horns and the Horn Kernel}

Let $p\ge 1$ and $0\le j\le p$. We denote $F_j = [p] \setminus \{j\}$.

\begin{definition}[$(p,j)$-Horn and Filler]
A \textbf{$(p,j)$-horn} in $X_\bullet$ is a tuple $H = (h_i)_{i \in F_j}$ of elements $h_i \in X_{p-1}$ satisfying the \textbf{horn compatibility condition}:
\[
d_i(h_\ell) = d_{\ell-1}(h_i) \quad \text{for all } i < \ell \text{ with } i, \ell \in F_j.
\]
A tensor $T\in X_p$ is a \textbf{filler} of $H$ if $d_i(T) = h_i$ for all $i\in F_j$.
\end{definition}

In the category of simplicial modules, every horn has at least one filler. The uniqueness of fillers is measured by the horn kernel.

\begin{definition}[Horn Kernel]
The \textbf{$(p,j)$-horn kernel} $R_{p,j}$ is the submodule of $X_p$ defined by
\[
R_{p,j} := \bigcap_{i \in F_j} \ker(d_i^p).
\]
\end{definition}

If $T$ and $T'$ are two fillers for the same horn $H$, their difference $T-T' \in R_{p,j}$. The set of fillers $\mathcal{F}(H)$ is an affine space (torsor) modeled on $R_{p,j}$. Fillers are unique if and only if $R_{p,j} = \{0\}$.

\subsection{Structure of the Face Kernels}

We characterize the kernels of the face maps using the standard basis $\{E_m\}_{m\in I_p}$ of $X_p$.

\begin{lemma}[Action on basis elements]\label{lem:face_action}
Let $m \in I_p$. The action of the face map $d_i^p$ on $E_{m}$ is
\[
d_i^p(E_{m}) =
\begin{cases}
E_{m'} & \text{if } m \in \operatorname{im}(\Delta_i^p), \text{ where } m' \text{ is the unique preimage},\\[4pt]
0      & \text{if } m \notin \operatorname{im}(\Delta_i^p).
\end{cases}
\]
\end{lemma}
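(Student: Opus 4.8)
The plan is to unwind the definition of $d_i^p$ as precomposition with the index coface map $\Delta_i^p$ and then evaluate the resulting function on $I_{p-1}$ pointwise.

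First I would recall that, since $X_\bullet(\vec{s};A) = A^{(-)}\circ I_\bullet$, the face map $d_i^p$ sends $T\in A^{I_p}$ to $T\circ\Delta_i^p\in A^{I_{p-1}}$, where $\Delta_i^p = \prod_{a=1}^k\delta_i^{M_a(p)}\colon I_{p-1}\to I_p$. The basis element $E_m$ is the indicator function with $E_m(x)=1$ if $x=m$ and $E_m(x)=0$ otherwise. Hence for every $m''\in I_{p-1}$,
\[
d_i^p(E_m)(m'') = E_m\bigl(\Delta_i^p(m'')\bigr) =
\begin{cases}
1 & \text{if } \Delta_i^p(m'') = m,\\
0 & \text{otherwise,}
\end{cases}
\]
so $d_i^p(E_m)$ is precisely the indicator function of the preimage $(\Delta_i^p)^{-1}(\{m\})\subseteq I_{p-1}$.

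Second I would observe that $\Delta_i^p$ is injective. Each factor $\delta_i^{M_a(p)}\colon [M_a(p)-1]\to[M_a(p)]$ is the order-preserving injection omitting the value $i$; it is well defined because $M_a(p)\ge p\ge i$, and it is injective. A product of injective maps of sets is injective, so $\Delta_i^p$ is injective and the preimage $(\Delta_i^p)^{-1}(\{m\})$ contains at most one point. If $m\in\operatorname{im}(\Delta_i^p)$ this preimage is a singleton $\{m'\}$ and the pointwise formula above gives $d_i^p(E_m) = E_{m'}$; if $m\notin\operatorname{im}(\Delta_i^p)$ the preimage is empty and $d_i^p(E_m) = 0$. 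This is exactly the stated dichotomy.

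The argument is a direct computation, so I do not expect a genuine obstacle; the only point needing care is the injectivity of $\Delta_i^p$ together with the check that all coface factors are defined for the common index $i$, both of which follow from $M_a(p)\ge p\ge i$. If useful for later sections one can record the explicit form of $m'$: in each coordinate $m'_a$ is obtained from $m_a$ by deleting the value $i$ from the image of $\delta_i$, i.e. $m'_a = m_a$ if $m_a < i$ and $m'_a = m_a - 1$ if $m_a > i$, with $m$ lying in $\operatorname{im}(\Delta_i^p)$ exactly when $m_a\neq i$ for all $a$; this is not needed for the statement itself.
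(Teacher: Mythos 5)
Your proof is correct and follows essentially the same route as the paper: evaluate $d_i^p(E_m)$ pointwise via precomposition with $\Delta_i^p$, then invoke injectivity of $\Delta_i^p$ to split into the two cases. You spell out the injectivity of $\Delta_i^p$ a bit more explicitly than the paper does, and the closing remark about the explicit form of $m'$ anticipates content the paper develops later (Proposition~\ref{prop:missing_indices}), but the core argument is identical.
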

\begin{proof}
Evaluate at $m'' \in I_{p-1}$: $(d_i^p(E_{m}))(m'') = E_{m}(\Delta_i^p(m'')) = \delta_{m, \Delta_i^p(m'')}$. If $m \notin \operatorname{im}(\Delta_i^p)$, this is zero. If $m \in \operatorname{im}(\Delta_i^p)$, since $\Delta_i^p$ is injective, there is a unique $m'$ such that $\Delta_i^p(m')=m$. Then $d_i^p(E_{m}) = E_{m'}$.
\end{proof}

\begin{lemma}[Injectivity Lemma]\label{lem:injectivity}
If $d_i^p(E_{m_1}) = d_i^p(E_{m_2}) \ne 0$, then $m_1 = m_2$.
\end{lemma}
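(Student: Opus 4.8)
The plan is to derive this immediately from Lemma~\ref{lem:face_action}, which completely describes the action of $d_i^p$ on basis elements. First I would observe that since $d_i^p(E_{m_1})\neq 0$ (and likewise for $m_2$), the second case of Lemma~\ref{lem:face_action} is excluded, so both $m_1$ and $m_2$ lie in $\operatorname{im}(\Delta_i^p)$. Because $\Delta_i^p=\prod_a \delta_i^{M_a(p)}$ is a product of injective coface maps and hence itself injective, each $m_a$ has a unique preimage $m_a'\in I_{p-1}$ with $\Delta_i^p(m_a')=m_a$, and Lemma~\ref{lem:face_action} gives $d_i^p(E_{m_a})=E_{m_a'}$.

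Next I would use the hypothesis $d_i^p(E_{m_1})=d_i^p(E_{m_2})$ to conclude $E_{m_1'}=E_{m_2'}$ as elements of $X_{p-1}=A^{I_{p-1}}$. Since $\{E_m\}_{m\in I_{p-1}}$ is the standard basis of indicator functions on the finite set $I_{p-1}$, distinct indices give distinct (indeed linearly independent) basis vectors; evaluating both sides at an arbitrary point of $I_{p-1}$ shows $E_{m_1'}=E_{m_2'}$ forces $m_1'=m_2'$. Applying the (well-defined, single-valued) map $\Delta_i^p$ then yields $m_1=\Delta_i^p(m_1')=\Delta_i^p(m_2')=m_2$, as required.

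There is no real obstacle here: the statement is essentially a restatement of the injectivity of $\Delta_i^p$ combined with the fact that $A^{(-)}$ sends injections to surjections carrying basis vectors to basis vectors. The only point deserving a word of care is that one must invoke nonvanishing on \emph{both} $E_{m_1}$ and $E_{m_2}$ before quoting Lemma~\ref{lem:face_action}; without it a basis element could map to $0$ and the argument would not get started. I would keep the write-up to a few lines, citing Lemma~\ref{lem:face_action} for the form of $d_i^p(E_m)$ and the injectivity of $\Delta_i^p$ for the final step.
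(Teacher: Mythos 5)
Your proof is correct and follows essentially the same route as the paper: invoke Lemma~\ref{lem:face_action} to identify the common nonzero value as a basis element $E_{m^*}$, then observe that both $m_1$ and $m_2$ must equal $\Delta_i^p(m^*)$. The paper states this a bit more tersely, but the underlying argument is identical; your extra care in explicitly noting that distinct indices give distinct basis vectors is harmless, and your caution about needing nonvanishing on both sides before applying the lemma is well placed.
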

\begin{proof}
By Lemma~\ref{lem:face_action}, the common value is some $E_{m^*} \in X_{p-1}$. Then $m_1 = \Delta_i^p(m^*)$ and $m_2 = \Delta_i^p(m^*)$.
\end{proof}

\begin{theorem}[Basis of the Face Kernel]\label{thm:kernel_basis}
The kernel of $d_i^p: X_p \to X_{p-1}$ is the free $A$-submodule
\[
\ker(d_i^p) = \operatorname{span}_A\{E_{m} \mid m \in I_p \setminus \operatorname{im}(\Delta_i^p)\}.
\]
\end{theorem}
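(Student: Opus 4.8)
The plan is to read off the result directly from Lemmas~\ref{lem:face_action} and~\ref{lem:injectivity} by expanding an arbitrary element of $X_p$ in the standard basis and tracking what $d_i^p$ does coefficient by coefficient. First I would record the two structural facts we need: (i) each index coface map $\Delta_i^p = \prod_a \delta_i^{M_a(p)}$ is injective, being a product of injective maps, so it restricts to a bijection $\Delta_i^p : I_{p-1} \xrightarrow{\sim} \operatorname{im}(\Delta_i^p) \subseteq I_p$; and (ii) by Lemma~\ref{lem:face_action}, $d_i^p(E_m) = E_{(\Delta_i^p)^{-1}(m)}$ when $m \in \operatorname{im}(\Delta_i^p)$ and $d_i^p(E_m)=0$ otherwise.

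Next I would compute $d_i^p$ on a general element. Write $T = \sum_{m \in I_p} c_m E_m$ with $c_m \in A$. Applying $d_i^p$ and using (ii), only the basis vectors indexed by $\operatorname{im}(\Delta_i^p)$ contribute, so
\[
d_i^p(T) \;=\; \sum_{m \in \operatorname{im}(\Delta_i^p)} c_m\, E_{(\Delta_i^p)^{-1}(m)} \;=\; \sum_{m' \in I_{p-1}} c_{\Delta_i^p(m')}\, E_{m'},
\]
where the reindexing in the last step uses the bijection from (i). Since $\{E_{m'}\}_{m' \in I_{p-1}}$ is a basis of $X_{p-1}$, this element is zero if and only if $c_{\Delta_i^p(m')} = 0$ for every $m' \in I_{p-1}$, equivalently $c_m = 0$ for every $m \in \operatorname{im}(\Delta_i^p)$. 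Hence $T \in \ker(d_i^p)$ precisely when $T$ is supported on $I_p \setminus \operatorname{im}(\Delta_i^p)$, which gives $\ker(d_i^p) = \operatorname{span}_A\{E_m \mid m \in I_p \setminus \operatorname{im}(\Delta_i^p)\}$.

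Finally I would note that this spanning set is part of the basis $\{E_m\}_{m \in I_p}$, hence $A$-linearly independent, so the kernel is in fact free on $\{E_m : m \in I_p \setminus \operatorname{im}(\Delta_i^p)\}$, as claimed. I do not expect a genuine obstacle here: the argument is a routine basis computation, and Lemma~\ref{lem:injectivity} (or equivalently the injectivity of $\Delta_i^p$) is exactly what licenses the clean reindexing; the only thing to be careful about is not conflating $\operatorname{im}(\Delta_i^p) \subseteq I_p$ with $I_{p-1}$ itself, which the explicit bijection handles.
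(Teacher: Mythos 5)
Your proof is correct and follows essentially the same route as the paper: expand $T$ in the standard basis, use Lemma~\ref{lem:face_action} to drop the terms supported off $\operatorname{im}(\Delta_i^p)$, and then invoke the injectivity of $\Delta_i^p$ (Lemma~\ref{lem:injectivity}) to see the remaining images are distinct basis elements of $X_{p-1}$, forcing their coefficients to vanish. Your explicit reindexing by the bijection $\Delta_i^p:I_{p-1}\to\operatorname{im}(\Delta_i^p)$ is a minor notational variant of the paper's appeal to linear independence of $\{d_i^p(E_m)\}$.
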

\begin{proof}
Let $T = \sum_{m \in I_p} a_{m} E_{m} \in X_p$. Applying $d_i^p$ gives
\[
d_i^p(T) = \sum_{m \in \operatorname{im}(\Delta_i^p)} a_{m} d_i^p(E_{m}),
\]
since $d_i^p(E_{m})=0$ if $m \notin \operatorname{im}(\Delta_i^p)$ (Lemma~\ref{lem:face_action}). By the Injectivity Lemma~\ref{lem:injectivity}, the set $\{d_i^p(E_{m}) \mid m \in \operatorname{im}(\Delta_i^p)\}$ consists of distinct standard basis elements in $X_{p-1}$, hence is $A$-linearly independent.

Thus $d_i^p(T)=0$ if and only if $a_{m} = 0$ for all $m \in \operatorname{im}(\Delta_i^p)$.
Equivalently, $T$ is an $A$-linear combination of the $E_m$ with $m\notin \operatorname{im}(\Delta_i^p)$, which proves the stated basis description of $\ker(d_i^p)$.
\end{proof}

\subsection{Missing Indices and the Support Characterization}

\begin{definition}[Missing Index]\label{def:missing_index}
An index $m\in I_p$ is \textbf{missing} from the $(p,j)$-horn if
\[
\forall i \in F_j: m \notin \operatorname{im}(\Delta_i^p).
\]
Let $M_{p,j} \subset I_p$ denote the set of missing indices.
\end{definition}

\begin{theorem}[Support Characterization / Horn Kernel Basis]\label{thm:support_characterization}
The horn kernel $R_{p,j}$ is a free $A$-module with basis $\{E_m\}_{m \in M_{p,j}}$. 
Consequently, if $T_1$ and $T_2$ are two fillers of the same horn, their difference 
is supported on the missing indices: $\operatorname{supp}(T_1-T_2) \subseteq M_{p,j}$.
\end{theorem}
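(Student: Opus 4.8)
The plan is to reduce the statement to Theorem~\ref{thm:kernel_basis} together with the elementary fact that, in a free module equipped with a distinguished basis, the intersection of ``coordinate submodules'' (those spanned by subsets of the basis) is again the coordinate submodule indexed by the intersection of the defining index sets. So I would first recall from Theorem~\ref{thm:kernel_basis} that $\ker(d_i^p)=\operatorname{span}_A\{E_m : m\in I_p\setminus\operatorname{im}(\Delta_i^p)\}$ for each $i\in F_j$, and from Definition~\ref{def:missing_index} that $M_{p,j}=\bigcap_{i\in F_j}\bigl(I_p\setminus\operatorname{im}(\Delta_i^p)\bigr)$. The inclusion $\operatorname{span}_A\{E_m : m\in M_{p,j}\}\subseteq R_{p,j}$ is then immediate, since any such $E_m$ lies in every $\ker(d_i^p)$ by construction of $M_{p,j}$.

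For the reverse inclusion, take $T=\sum_{m\in I_p}a_m E_m\in R_{p,j}$. Since $p\ge 1$, the set $F_j=[p]\setminus\{j\}$ is nonempty; for each $i\in F_j$ the condition $T\in\ker(d_i^p)$ forces, by the uniqueness of the coordinate expansion in the free module $X_p=A^{I_p}$ together with Theorem~\ref{thm:kernel_basis}, that $a_m=0$ whenever $m\in\operatorname{im}(\Delta_i^p)$. Equivalently, $a_m\ne 0$ implies $m\notin\operatorname{im}(\Delta_i^p)$ for every $i\in F_j$, i.e.\ $m\in M_{p,j}$. Hence $T\in\operatorname{span}_A\{E_m : m\in M_{p,j}\}$, and since $\{E_m\}_{m\in M_{p,j}}$ is a subset of the standard basis of $X_p$ it is $A$-linearly independent, so it is a basis of $R_{p,j}$.

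For the ``consequently'' clause, if $T_1$ and $T_2$ both fill the horn $H=(h_i)_{i\in F_j}$, then $d_i^p(T_1-T_2)=h_i-h_i=0$ for all $i\in F_j$, so $T_1-T_2\in R_{p,j}$; by the basis statement just proved, its support is contained in $M_{p,j}$.

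I expect no serious obstacle: the only point requiring a word of care is the passage from ``$T$ lies in each $\ker(d_i^p)$'' to ``the coordinates of $T$ vanish off $M_{p,j}$,'' which is exactly where one invokes the uniqueness of the basis expansion in $A^{I_p}$ and where one must note $F_j\neq\emptyset$ (so that the intersection is taken over a nonempty family and does not collapse to all of $X_p$). Everything else is bookkeeping with Theorem~\ref{thm:kernel_basis} and Definition~\ref{def:missing_index}.
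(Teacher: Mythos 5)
Your proposal is correct and follows essentially the same route as the paper: both reduce to Theorem~\ref{thm:kernel_basis} and the observation that an intersection of coordinate submodules of $A^{I_p}$ is the coordinate submodule indexed by the intersection of the defining index sets, which by Definition~\ref{def:missing_index} is exactly $M_{p,j}$. You merely spell out both inclusions and the coordinate-vanishing argument explicitly where the paper asserts them in one line; the extra remark that $F_j\neq\emptyset$ for $p\ge 1$ is a fine (if minor) point of care.
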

\begin{proof}
By Theorem~\ref{thm:kernel_basis}, $\ker(d_i^p)$ is the coordinate subspace spanned by the basis
$B_i = \{E_m \mid m \notin \operatorname{im}(\Delta_i^p)\}$.
The horn kernel $R_{p,j}$ is the intersection of these coordinate subspaces for $i\in F_j$.
Since each $\operatorname{span}_A(B_i)$ consists exactly of those tensors whose coefficients vanish outside $B_i$, 
their intersection consists of those tensors whose coefficients vanish outside $\bigcap_{i\in F_j} B_i$, i.e. it is 
$\operatorname{span}_A\!\bigl(\bigcap_{i\in F_j} B_i\bigr)$.

An element $E_m$ lies in this intersection if and only if $m \notin \operatorname{im}(\Delta_i^p)$ for all $i\in F_j$, which defines $M_{p,j}$.
The consequence regarding the support of the difference follows because $T_1-T_2 \in R_{p,j}$.
\end{proof}

\subsection{Combinatorial Characterization}

We provide a combinatorial interpretation of missing indices. We denote the image of a multi-index $m=(m_1, \dots, m_k)$ by $\operatorname{im}(m) = \{m_1, \dots, m_k\}$.

\begin{proposition}[Characterization of Missing Indices]\label{prop:missing_indices}
An index $m\in I_p$ is missing from the $(p,j)$-horn ($m \in M_{p,j}$) if and only if $\operatorname{im}(m) \supseteq F_j$.
\end{proposition}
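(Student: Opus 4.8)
The plan is to unwind the definition of ``missing'' in terms of the index coface maps $\Delta_i^p$ and then translate the condition $m\notin\operatorname{im}(\Delta_i^p)$ into a statement about which values $i$ are hit by the coordinates of $m$. First I would recall that $\Delta_i^p=\prod_{a=1}^k\delta_i^{M_a(p)}$, so $m=(m_1,\dots,m_k)\in\operatorname{im}(\Delta_i^p)$ if and only if $m_a\in\operatorname{im}(\delta_i^{M_a(p)})$ for every axis $a$. Since $\delta_i^{M}:[M-1]\to[M]$ is the order-preserving injection that omits the value $i$, its image is $[M]\setminus\{i\}$; hence $m_a\in\operatorname{im}(\delta_i^{M_a(p)})$ precisely when $m_a\neq i$. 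Therefore $m\in\operatorname{im}(\Delta_i^p)$ if and only if $i\notin\{m_1,\dots,m_k\}=\operatorname{im}(m)$, and equivalently $m\notin\operatorname{im}(\Delta_i^p)$ if and only if $i\in\operatorname{im}(m)$.

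Next I would assemble the horn condition from Definition~\ref{def:missing_index}: $m\in M_{p,j}$ means $m\notin\operatorname{im}(\Delta_i^p)$ for all $i\in F_j=[p]\setminus\{j\}$. By the previous paragraph this is equivalent to $i\in\operatorname{im}(m)$ for all $i\in F_j$, i.e.\ $F_j\subseteq\operatorname{im}(m)$, which is the claimed characterization $\operatorname{im}(m)\supseteq F_j$. I would present this as a short chain of ``if and only if'' equivalences, one per quantifier, so no separate converse argument is needed.

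One small point worth spelling out is that the reasoning only uses $M_a(p)\ge p\ge i$ for all $a$ and all $i\in F_j$, which is guaranteed by the standing inequality $M_a(p)\ge p$ established in the setup; this is what makes every factor $\delta_i^{M_a(p)}$ well defined and guarantees $\operatorname{im}(\delta_i^{M_a(p)})=[M_a(p)]\setminus\{i\}$ rather than something degenerate. I do not anticipate a genuine obstacle here: the statement is essentially a repackaging of Lemma~\ref{lem:face_action} and Theorem~\ref{thm:kernel_basis} at the level of individual indices. If anything, the only mild care needed is bookkeeping with the two distinct uses of the symbol ``$M$'' (the bound $M_a(p)$ versus the generic coface superscript), which I would keep notationally separate by writing $\delta_i^{M_a(p)}$ throughout.

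Thus the proof is a direct computation, with the ``hard part'' amounting to no more than correctly identifying $\operatorname{im}(\delta_i^{M})=[M]\setminus\{i\}$ and distributing the quantifier over axes and over $i\in F_j$.
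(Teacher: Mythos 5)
Your argument is exactly the paper's proof: you establish the key equivalence $m\in\operatorname{im}(\Delta_i^p)\iff i\notin\operatorname{im}(m)$ by unwinding $\Delta_i^p=\prod_a\delta_i^{M_a(p)}$ and using $\operatorname{im}(\delta_i^M)=[M]\setminus\{i\}$, then quantify over $i\in F_j$ to conclude $m\in M_{p,j}\iff F_j\subseteq\operatorname{im}(m)$. You even flag the same supporting inequality $i\in[p]\subseteq[M_a(p)]$ that the paper notes parenthetically, so there is no gap and no meaningful difference in method.
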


\begin{proof}
We first establish the equivalence: $m \in \operatorname{im}(\Delta_i^p) \iff i \notin \operatorname{im}(m)$.
Recall $\Delta_i^p = \prod_{a=1}^k \delta_i^{M_a(p)}$. The image of $\delta_i^q$ is $[q] \setminus \{i\}$.
\begin{align*}
m \in \operatorname{im}(\Delta_i^p) &\iff \forall a: m_a \in \operatorname{im}(\delta_i^{M_a(p)}) \\
&\iff \forall a: m_a \ne i \quad (\text{since } i \in [p] \subseteq [M_a(p)])\\
&\iff i \notin \operatorname{im}(m).
\end{align*}
Taking the contrapositive, $m \notin \operatorname{im}(\Delta_i^p) \iff i \in \operatorname{im}(m)$.

By definition, $m \in M_{p,j}$ if and only if $\forall i \in F_j: m \notin \operatorname{im}(\Delta_i^p)$. Therefore,
\[
m \in M_{p,j} \iff \forall i \in F_j: i \in \operatorname{im}(m)
\iff F_j \subseteq \operatorname{im}(m).
\]
\end{proof}

\begin{corollary}\label{cor:missing_indices_existence}
The horn kernel is non-trivial ($R_{p,j} \ne \{0\}$) if and only if $k \ge p$.
\end{corollary}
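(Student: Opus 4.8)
The plan is to reduce the statement, via the two results just established, to a pure counting question about multi-indices. By Theorem~\ref{thm:support_characterization}, $R_{p,j}$ is a free $A$-module on $\{E_m\}_{m\in M_{p,j}}$, so $R_{p,j}\neq\{0\}$ is equivalent to $M_{p,j}\neq\emptyset$. By Proposition~\ref{prop:missing_indices}, $M_{p,j}\neq\emptyset$ holds if and only if there exists a multi-index $m=(m_1,\dots,m_k)\in I_p$ whose coordinate set $\operatorname{im}(m)=\{m_1,\dots,m_k\}$ contains $F_j=[p]\setminus\{j\}$; here $|F_j|=p$.

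For the ``if'' direction I would argue constructively. Assume $k\ge p$. Since $F_j\subseteq[p]\subseteq[M_a(p)]$ for every axis $a$, each of the $p$ elements of $F_j$ is a legal coordinate value on every axis. Enumerating $F_j=\{i_1<\cdots<i_p\}$, set $m_a=i_a$ for $1\le a\le p$ and $m_a=0$ for $p<a\le k$. Then $m$ lies in $I_p=\prod_{a}[M_a(p)]$ and $\operatorname{im}(m)\supseteq F_j$, so $m\in M_{p,j}$ and hence $R_{p,j}\neq\{0\}$. For the converse, assume $k<p$; then for any $m\in I_p$ the set $\operatorname{im}(m)$ has at most $k<p$ elements and so cannot contain the $p$-element set $F_j$. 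Thus $M_{p,j}=\emptyset$ and $R_{p,j}=\{0\}$.

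There is essentially no obstacle here: once Theorem~\ref{thm:support_characterization} and Proposition~\ref{prop:missing_indices} are in hand, the argument is a pigeonhole count. The only point worth an explicit word is that the multi-index built in the ``if'' direction genuinely lies in the product set $I_p$, which is immediate from $[p]\subseteq[M_a(p)]$. Note also that the criterion $k\ge p$ does not involve $j$, matching the $j$-independence claim in the introduction.
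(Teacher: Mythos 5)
Your proof is correct and follows essentially the same route as the paper: reduce via Theorem~\ref{thm:support_characterization} and Proposition~\ref{prop:missing_indices} to the nonemptiness of $M_{p,j}$, then construct an explicit $m$ with $\operatorname{im}(m)\supseteq F_j$ when $k\ge p$, and use the cardinality bound $|\operatorname{im}(m)|\le k$ for the converse.
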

\begin{proof}
By Theorem~\ref{thm:support_characterization}, $R_{p,j} \ne \{0\}$ iff $M_{p,j} \ne \emptyset$.

($\Rightarrow$) If $m \in M_{p,j}$, then $k \ge |\operatorname{im}(m)| \ge |F_j| = p$.

($\Leftarrow$) Suppose $k \ge p$. Let $F_j = \{f_1, \dots, f_p\}$. Set
\[
m := (f_1, \dots, f_p, 0, \dots, 0)\in [p]^k \subseteq I_p.
\]
Then $\operatorname{im}(m) \supseteq F_j$, so $m \in M_{p,j}$.
\end{proof}

\subsection{\texorpdfstring{The $\ell$-free Subspace}{The l-free Subspace}}

Fix a degree $p\ge 1$ and an index $0\le \ell\le p$.
The combinatorial characterization of $\operatorname{im}(\Delta_\ell^p)$ yields a structural property of the face maps.

\begin{definition}[$\ell$-free subspace]
Define the $\ell$-free subspace
\[
X_p^{(\ell\text{-free})}
:=\operatorname{span}_A\{\,E_{m}:\ \ell \notin \operatorname{im}(m) \,\}\ \subseteq X_p.
\]
\end{definition}

\begin{corollary}[Isomorphism on the $\ell$-free subspace]\label{cor:inj-face}
The face map $d_\ell^p$ induces a linear isomorphism
\[
d_\ell^p:\ X_p^{(\ell\text{-free})}\ \xrightarrow{\ \cong\ }\ X_{p-1}.
\]
\end{corollary}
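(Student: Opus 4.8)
The plan is to realize $d_\ell^p$ as the composite of a projection onto a coordinate subspace followed by a bijection of bases, and then package both surjectivity and injectivity from the combinatorics already established. First I would recall from Theorem~\ref{thm:kernel_basis} that $\ker(d_\ell^p) = \operatorname{span}_A\{E_m : m \notin \operatorname{im}(\Delta_\ell^p)\}$, and from the equivalence proved inside Proposition~\ref{prop:missing_indices} that $m \in \operatorname{im}(\Delta_\ell^p) \iff \ell \notin \operatorname{im}(m)$. Hence the complementary basis $\{E_m : m \in \operatorname{im}(\Delta_\ell^p)\}$ is exactly $\{E_m : \ell \notin \operatorname{im}(m)\}$, which is the defining basis of $X_p^{(\ell\text{-free})}$. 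So $X_p = X_p^{(\ell\text{-free})} \oplus \ker(d_\ell^p)$ as free $A$-modules, and the restriction of $d_\ell^p$ to $X_p^{(\ell\text{-free})}$ has trivial kernel.

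Next I would establish surjectivity and simultaneously pin down the inverse explicitly. By Lemma~\ref{lem:face_action}, for $m \in \operatorname{im}(\Delta_\ell^p)$ we have $d_\ell^p(E_m) = E_{m'}$ where $m'$ is the unique preimage under $\Delta_\ell^p$. Since $\Delta_\ell^p : I_{p-1} \to I_p$ is injective (it is a product of the injective coface maps $\delta_\ell^{M_a(p)}$), the assignment $m \mapsto m'$ is a bijection from $\operatorname{im}(\Delta_\ell^p)$ onto $I_{p-1}$, with inverse $m' \mapsto \Delta_\ell^p(m')$. Therefore $d_\ell^p$ carries the basis $\{E_m : m \in \operatorname{im}(\Delta_\ell^p)\}$ of $X_p^{(\ell\text{-free})}$ bijectively onto the basis $\{E_{m'} : m' \in I_{p-1}\}$ of $X_{p-1}$. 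A linear map sending a basis bijectively to a basis is an isomorphism; this gives the claim. One can even write the inverse as $E_{m'} \mapsto E_{\Delta_\ell^p(m')}$, i.e. $s$ followed by restriction to the $\ell$-free part, which may be worth recording for later use in the Moore filler algorithm.

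There is essentially no obstacle here: the corollary is a direct repackaging of Theorem~\ref{thm:kernel_basis} together with the image characterization $m \in \operatorname{im}(\Delta_\ell^p) \iff \ell \notin \operatorname{im}(m)$, both already in hand. The only point requiring a word of care is checking that $\Delta_\ell^p$ is a well-defined injection $I_{p-1} \to I_p$ for every $\ell \in \{0,\dots,p\}$ — this uses $M_a(p) \ge p \ge \ell$, so the common index $\ell$ is admissible in each factor $\delta_\ell^{M_a(p)}$, exactly as noted after the definition of the cosimplicial structure maps. Everything else is formal bookkeeping about coordinate subspaces of a finite free module.
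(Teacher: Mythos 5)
Your proof is correct and follows the same approach as the paper: both identify $X_p^{(\ell\text{-free})}$ with $\operatorname{span}_A\{E_m : m\in\operatorname{im}(\Delta_\ell^p)\}$ via the equivalence from Proposition~\ref{prop:missing_indices}, then use Lemma~\ref{lem:face_action} and the bijectivity of $\Delta_\ell^p$ onto its image to conclude that $d_\ell^p$ carries a basis bijectively to a basis. Your added remarks (the explicit complement $X_p = X_p^{(\ell\text{-free})} \oplus \ker d_\ell^p$ and the closed-form inverse $E_{m'}\mapsto E_{\Delta_\ell^p(m')}$) are a fine elaboration but do not change the substance.
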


\begin{proof}
By the proof of Proposition~\ref{prop:missing_indices}, $m \in \operatorname{im}(\Delta_\ell^p) \iff \ell \notin \operatorname{im}(m)$. 
Thus $X_p^{(\ell\text{-free})}$ is spanned by $\{E_{m} \mid m \in \operatorname{im}(\Delta_\ell^p)\}$. 
By the Injectivity Lemma~\ref{lem:injectivity}, $d_\ell^p$ is injective on this subspace. 
Since $\Delta_\ell^p: I_{p-1} \to \operatorname{im}(\Delta_\ell^p)$ is a bijection, Lemma~\ref{lem:face_action} 
shows that $d_\ell^p$ maps the basis of $X_p^{(\ell\text{-free})}$ bijectively onto the basis of $X_{p-1}$.
\end{proof}

\subsection{Genericity}

Fix $p\ge 1$ and $0\le j\le p$, and assume $A$ is an infinite field and $k\ge p$.

For $T\in X_p$, set
\[
T' := \mu_j(\Phi_j(T)),
\]
the Moore filler of the horn $\Phi_j(T)$ (Section~\ref{sec:normalization}). Then $\Phi_j(T')=\Phi_j(T)$, hence
$T-T'\in R_{p,j}$. By Theorem~\ref{thm:support_characterization}, $\operatorname{supp}(T-T') \subseteq M_{p,j}$.

\begin{definition}[Generic Tensor]
A tensor $T\in X_p$ is called \textbf{$(p,j)$-generic} if the difference $T-T'$ has maximal support: 
$\operatorname{supp}(T-T')=M_{p,j}$. (This requires $k\ge p$).
\end{definition}

\begin{proposition}[Generic Locus]\label{prop:generic_locus}
Assume $A$ is an infinite field and $k\ge p$. The set of $(p,j)$-generic tensors forms a Zariski-open subset $\mathcal{U}_{p,j} \subset X_p$.
\end{proposition}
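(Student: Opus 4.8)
The plan is to make the defining condition "$\operatorname{supp}(T-T')=M_{p,j}$" explicit as a polynomial non-vanishing condition on the coordinates of $T$. Write $T=\sum_{m\in I_p}a_m E_m$ with coordinates $a_m\in A$, so $X_p\cong A^{I_p}$ is affine space and "Zariski-open" refers to the affine coordinate ring $A[\{a_m\}_{m\in I_p}]$. The Moore filler $T'=\mu_j(\Phi_j(T))$ depends $A$-linearly on $T$ (the filler algorithm of Section~\ref{sec:normalization} is built from face, degeneracy, and the section-type maps of Corollary~\ref{cor:inj-face}, all $A$-linear), so each coordinate $(T-T')_m$ is a fixed $A$-linear form $L_m(\{a_{m'}\})$ in the coordinates of $T$. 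By Theorem~\ref{thm:support_characterization} we already know $(T-T')_m=0$ for $m\notin M_{p,j}$ identically, so the only coordinates in play are $\{(T-T')_m : m\in M_{p,j}\}=\{L_m : m\in M_{p,j}\}$.

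**The open condition.**

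With this notation, $T$ is $(p,j)$-generic iff $L_m(T)\neq 0$ for every $m\in M_{p,j}$, i.e.\ iff the single polynomial
\[
P_{p,j}(T) \;:=\; \prod_{m\in M_{p,j}} L_m(T)
\]
is nonzero. Since $A$ is an integral domain, a product of elements of $A$ is nonzero iff each factor is nonzero, so $\mathcal{U}_{p,j}=\{T : P_{p,j}(T)\neq 0\}=D(P_{p,j})$ is a principal Zariski-open subset of $X_p\cong\mathbb{A}^{I_p}$. (When $k\ge p$, Corollary~\ref{cor:missing_indices_existence} guarantees $M_{p,j}\neq\emptyset$, so $P_{p,j}$ is a genuine, nonconstant-in-general product and the condition is nonvacuous; if $M_{p,j}=\emptyset$ the product is the empty product $1$ and $\mathcal{U}_{p,j}=X_p$, but that case is excluded by hypothesis.)

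**Non-emptiness over an infinite domain.**

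For the last clause, I must show $P_{p,j}$ is not the zero polynomial, equivalently that each linear form $L_m$ ($m\in M_{p,j}$) is not identically zero; then over an infinite integral domain a finite product of nonzero polynomials has a nonzero value somewhere (an infinite integral domain is not the union of finitely many proper subvarieties, e.g.\ by the standard induction on the number of variables using that a nonzero one-variable polynomial over an infinite domain has a nonroot). To see $L_m\not\equiv 0$ for a fixed $m\in M_{p,j}$: test on $T=E_m$ itself. I claim $d_i^p(E_m)=0$ for all $i\in F_j$, so $E_m\in R_{p,j}$; indeed $m\in M_{p,j}$ means $m\notin\operatorname{im}(\Delta_i^p)$ for all $i\in F_j$, whence $d_i^p(E_m)=0$ by Lemma~\ref{lem:face_action}. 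Now the Moore filler construction, applied to a horn that is already zero, returns the zero tensor (the filler algorithm is the identity-on-the-zero-horn, since it only builds corrections out of the given faces $d_i(T)$, all of which vanish here); more robustly, the filler map $T\mapsto T'$ is $A$-linear and, by uniqueness of the Moore/normalized representative, kills $R_{p,j}$—so $\Phi_j(E_m)=0$ and $T'=0$. Hence $(E_m - 0)=E_m$, whose $m$-coordinate is $1\neq 0$, so $L_m(E_m)=1$ and $L_m\not\equiv 0$. Therefore $P_{p,j}\not\equiv 0$, and when $A$ is infinite there exists $T\in X_p$ with $P_{p,j}(T)\neq 0$, i.e.\ $\mathcal{U}_{p,j}\neq\emptyset$.

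**Main obstacle.**

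The one step that needs genuine care—rather than being purely formal—is the claim that the Moore filler map $T\mapsto T'=\mu_j(\Phi_j(T))$ is $A$-linear and annihilates $R_{p,j}$ (equivalently, that it is a linear projection complementary to $R_{p,j}$, consistent with the decomposition $X_p = R_{p,j}\oplus(\text{image})$ in the $p=n$ case of Section~\ref{sec:hornnondegeneracylemma}). Linearity is immediate once one inspects that each stage of the normalization/filler algorithm in Section~\ref{sec:normalization} is a composite of the $A$-linear maps $d_i^p$, $s_i^p$, and the inverse of the isomorphism in Corollary~\ref{cor:inj-face}; and that $T'$ depends only on the horn faces $\{d_i(T)\}_{i\in F_j}$, so $T\in R_{p,j}\Rightarrow$ all those faces vanish $\Rightarrow T'=0$. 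I would cite the precise construction from Section~\ref{sec:normalization} at this point and verify these two properties directly from its recursive definition; everything else in the proposition is then bookkeeping with principal open sets and the standard fact that an infinite integral domain is not a finite union of proper closed subsets.
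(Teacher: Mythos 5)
Your proof is correct and follows essentially the same argument as the paper: express the coefficients of $T-T'$ on $M_{p,j}$ as $A$-linear forms, test each form on $T=E_m$ (using $E_m\in R_{p,j}\Rightarrow\Phi_j(E_m)=0\Rightarrow T'=0$) to see it is not identically zero, and conclude via the standard fact that an infinite domain is not a finite union of proper hyperplanes. The only cosmetic differences are that you package the non-vanishing conditions into one product polynomial instead of a finite union of proper linear subspaces, and you make a spurious reference to Corollary~\ref{cor:inj-face} (the Moore filler in Definition~\ref{def:moore_filler} uses only $s_i$ and $d_i$); the ``more robustly'' aside is also a little muddled, since $\Phi_j(E_m)=0$ follows directly from $E_m\in R_{p,j}=\ker\Phi_j$, not from any uniqueness of normalized representatives.
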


\begin{proof}
Define the $A$-linear endomorphism
\[
R := \operatorname{id}_{X_p} - \mu_j\circ\Phi_j : X_p\to X_p.
\]
Then $R(T)=T-\mu_j(\Phi_j(T))=T-T' \in R_{p,j}$. Write
\[
R(T) = \sum_{m\in M_{p,j}} c_m(T)\,E_m.
\]
Each coefficient $c_m(T)$ is an $A$-linear form in the entries of $T$.

We show that $c_m(T)$ is not identically zero. Consider $T=E_m$ for $m\in M_{p,j}$. Since $E_m \in R_{p,j}$, its horn is $\Phi_j(E_m)=0$. The Moore filler of the zero horn is $T'=\mu_j(0)=0$.
Then $R(E_m) = E_m$, so $c_m(E_m)=1$.

The generic locus is defined by the non-vanishing of these forms:
\[
\mathcal{U}_{p,j}\ :=\ \{\,T\in X_p:\ c_m(T)\neq 0\text{ for all }m\in M_{p,j}\,\}.
\]
The complement is the union of the zero loci of the non-zero forms $c_m$, hence a finite union of proper linear subspaces. Therefore $\mathcal{U}_{p,j}$ is Zariski-open. 
\end{proof} 


\section{Normalization and the Moore Filler}
\label{sec:normalization}

We review the normalization of simplicial modules. Let $X_\bullet$ be a simplicial $A$-module.

\subsection{The Normalization Theorem}

\begin{definition}[Normalized and degenerate submodules]\label{def:normalized_degenerate}
The \textbf{normalized submodule} in degree $p$ is $N_p(X):=\bigcap_{i=1}^{p}\ker d_i\subseteq X_p$.
The \textbf{degenerate submodule} in degree $p$ is $D_p(X):=\sum_{r=0}^{p-1}\operatorname{im}(s_r)\subseteq X_p$.
\end{definition}

Note that $N_p(X)=R_{p,0}(X)$ in the notation of Section~\ref{sec:horns}.

By the normalization theorem, the homology of $X_\bullet$ is isomorphic to the homology of its normalized (Moore) complex $(N_\bullet(X), d_0)$.

\begin{theorem}[Normalization theorem / Dold--Kan Correspondence]\label{thm:EZ}
For every $p\ge0$ there is a functorial direct sum decomposition
\[
X_p\cong N_p(X)\oplus D_p(X).
\]
There is a functorial projection $\pi_p:X_p\to N_p(X)$ with $\ker(\pi_p)=D_p(X)$; one explicit choice is given in Remark~\ref{rem:EM_idempotents}. 
Moreover, $X_p$ decomposes as a direct sum of images of $N_q(X)$ under iterated degeneracies (Eilenberg--Zilber decomposition):
\[
X_p \;=\; \bigoplus_{q=0}^{p}\ \bigoplus_{\substack{0\le i_1<\cdots<i_{p-q}\le p-1}} s_{i_{p-q}}\cdots s_{i_1}\,N_{q}(X).
\]
\emph{Refs.:} Dold~\cite{Dold1958}; Kan~\cite{Kan1958}; Weibel~\cite[Thm 8.3.8, Cor 8.4.2]{Weibel}.
\end{theorem}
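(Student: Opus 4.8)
The plan is to recognize Theorem~\ref{thm:EZ} as the classical Dold--Kan normalization theorem and to reconstruct the standard proof, whose core is the construction of an explicit natural idempotent $\pi_p\colon X_p\to X_p$ with image $N_p(X)$ and kernel $D_p(X)$. Everything is assembled from the simplicial identities among the $d_i$ and $s_j$, so once $\pi_p$ is in hand its naturality in $X_\bullet$ is automatic; in particular, since all $d_i,s_j$ are $\operatorname{Stab}(\vec s)$-equivariant by Lemma~\ref{lem:equivariance-stab}, so is $\pi_p$, and the splitting is compatible with the symmetry subcomplexes.

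First I would introduce, for $1\le j\le p$, the partial projectors $t_j:=\mathrm{id}_{X_p}-s_{j-1}d_j$ and set $\pi_p:=t_1t_2\cdots t_p$, so that $t_p$ is applied first. The identity $d_j s_{j-1}=\mathrm{id}$ gives $d_j t_j=0$ on all of $X_p$, while for $i>j$ the commutation rules $d_i s_{j-1}=s_{j-1}d_{i-1}$ together with $d_{i-1}d_j=d_j d_i$ (or $d_j d_j=d_j d_{j+1}$ when $i=j+1$) show that $t_j$ carries $\bigcap_{i>j}\ker d_i$ into $\bigcap_{i\ge j}\ker d_i$. Iterating from $t_p$ downward yields $\operatorname{im}(\pi_p)\subseteq\bigcap_{i=1}^{p}\ker d_i=N_p(X)$. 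On $N_p(X)$ every $t_j$ acts as the identity, since there $d_j=0$ for all $j\ge 1$; hence $\pi_p$ restricts to the identity on $N_p(X)$, so $\pi_p^2=\pi_p$ and $X_p=N_p(X)\oplus\ker(\pi_p)$ with $\ker(\pi_p)=\operatorname{im}(\mathrm{id}-\pi_p)$.

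It then remains to identify $\ker(\pi_p)$ with $D_p(X)$. One inclusion is immediate: expanding $\mathrm{id}-\pi_p$ into monomials in the $t_j$, each nonzero monomial has outermost nontrivial factor of the form $-s_{j-1}d_j$, so its image lies in $\operatorname{im}(s_{j-1})\subseteq D_p(X)$; hence $\ker(\pi_p)\subseteq D_p(X)$. For the reverse inclusion it suffices, given the splitting, to prove $N_p(X)\cap D_p(X)=0$: if $x\in D_p(X)$, then $x-\pi_p(x)\in\ker(\pi_p)\subseteq D_p(X)$ forces $\pi_p(x)\in N_p(X)\cap D_p(X)=0$, so $x\in\ker(\pi_p)$. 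The vanishing $N_p(X)\cap D_p(X)=0$, together with the finer Eilenberg--Zilber decomposition, I would obtain by the usual downward induction using the filtration $D_p^{(q)}:=\sum_{r=0}^{q-1}\operatorname{im}(s_r)$ and the subspaces $N_p^{(q)}:=\bigcap_{i>q}\ker d_i$, proving $X_p=N_p^{(q)}\oplus D_p^{(q)}$ for each $q$ (the case $q=p$ being the asserted splitting) and checking that the iterated degeneracies $s_{i_{p-q}}\cdots s_{i_1}$ with $0\le i_1<\cdots<i_{p-q}\le p-1$, applied to $N_q(X)$, remain linearly independent; concretely, any surjection $[p]\twoheadrightarrow[q]$ other than the identity factors through some $\sigma_r$, so its image in $X_p$ lies in $D_p(X)$, while the identity surjection contributes exactly $N_p(X)$.

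The one genuinely fiddly point is this last combinatorial bookkeeping --- equivalently, the acyclicity of the degenerate subcomplex $D_\bullet(X)$ --- which simultaneously pins down $N_p\cap D_p=0$ and the index set of the Eilenberg--Zilber sum; the rest is a mechanical application of the simplicial identities. Granting the splitting, the final claim $H_\ast(X_\bullet)\cong H_\ast(N_\bullet(X),d_0)$ is formal, as $D_\bullet(X)$ is an acyclic subcomplex stable under $d_0$. Since this is a textbook result, in the paper I would simply cite Dold~\cite{Dold1958}, Kan~\cite{Kan1958}, May~\cite{MaySimplicial}, Weibel~\cite[Thm 8.3.8, Cor 8.4.2]{Weibel}, and Goerss--Jardine~\cite[III.2]{GoerssJardine}, and retain only as much of the above as self-containedness requires.
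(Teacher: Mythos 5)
The paper does not actually prove Theorem~\ref{thm:EZ}: it cites the standard references and records the explicit Eilenberg--Mac~Lane idempotent in Remark~\ref{rem:EM_idempotents}, and you conclude your sketch by saying you would ultimately do the same, so your write-up is a supplement rather than a replacement. The idempotent portion of your argument is correct. With $t_j=\mathrm{id}-s_{j-1}d_j$, each $t_j$ indeed carries $\bigcap_{i>j}\ker d_i$ into $\bigcap_{i\ge j}\ker d_i$ (the computations with $d_js_{j-1}=\mathrm{id}$, $d_is_{j-1}=s_{j-1}d_{i-1}$, and $d_{i-1}d_j=d_jd_i$ for $i>j$ are exactly right), so applying $t_p,t_{p-1},\dots,t_1$ in that order yields an idempotent $\pi_p$ with image $N_p(X)$, restricting to the identity on $N_p(X)$, with $\ker(\pi_p)\subseteq D_p(X)$ by expanding $\mathrm{id}-\pi_p$. (Be aware that this is the reverse of the factor order literally displayed in Remark~\ref{rem:EM_idempotents}; if that product is read with the usual right-to-left composition, $\mathrm{id}-s_0d_1$ is applied first, and already at $p=2$ one gets $d_1\bigl((\mathrm{id}-s_1d_2)(\mathrm{id}-s_0d_1)x\bigr)=-d_2x+s_0d_1d_2x$, which need not vanish. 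The order you specify --- $t_p$ first --- is the one that works.)

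The gap is in the final bookkeeping step. With your definitions $N_p^{(q)}:=\bigcap_{i>q}\ker d_i$ and $D_p^{(q)}:=\sum_{r=0}^{q-1}\operatorname{im}(s_r)$, the claimed splitting $X_p=N_p^{(q)}\oplus D_p^{(q)}$ is false at both endpoints ($q=0$ would give $X_p=N_p\oplus 0$, and $q=p$ would give $X_p=X_p\oplus D_p$) and fails at intermediate stages as well (for $p\ge 2$ one has $\ker d_p\cap\operatorname{im}(s_0)=s_0(\ker d_{p-1})\neq 0$ in general, via $d_ps_0=s_0d_{p-1}$). The statement that actually supports a downward induction is $N_p(X)\cap D_p^{(q)}=0$ with $N_p(X)$ held fixed: take $x$ in the intersection, choose $m$ minimal with $x\in D_p^{(m+1)}$, write $x=s_m(z)+(\text{terms in }D_p^{(m)})$, and use $d_1x=\cdots=d_px=0$ to force $z\in\sum_{r<m}\operatorname{im}(s_r)$, contradicting minimality. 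This is exactly the argument the paper carries out, in greater generality, in Lemma~\ref{lem:horn-nondeg}, whose $j=0$ case gives $N_p(X)\cap D_p(X)=R_{p,0}\cap D_p=0$. Once that vanishing is in hand, the rest of your outline --- $\ker(\pi_p)\subseteq D_p(X)$ by expansion, the reverse inclusion from the trivial intersection, and the recursive Eilenberg--Zilber decomposition indexed by surjections $[p]\twoheadrightarrow[q]$ --- goes through as you describe.
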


\begin{remark}[Eilenberg--Mac Lane idempotents]\label{rem:EM_idempotents}
The decomposition is established by the \textbf{Eilenberg--Mac Lane idempotents}. The projection onto
$N_p(X)=\bigcap_{i=1}^{p}\ker d_i$ corresponding to the convention $(N_\bullet,d_0)$ is
\[
\pi_p
:= (\operatorname{id} - s_0d_1)\circ(\operatorname{id} - s_1d_2)\circ\cdots\circ(\operatorname{id} - s_{p-1}d_p),
\]
so that the $d_p$-correction is applied first.
\emph{Refs.:} Mac Lane~\cite[Ch.~VIII, §6]{MacLaneHomology}; Weibel~\cite[Thm 8.3.8]{Weibel}.
\end{remark}

\subsection{The Moore Horn Filler Algorithm}

\begin{definition}[Horn space]
Fix $p\ge 1$ and $j\in\{0,\dots,p\}$. Let $F_j = [p]\setminus\{j\}$. The space of compatible $(p,j)$-horns, $\operatorname{Horns}(p,j)$, is the submodule of $\prod_{i\in F_j}X_{p-1}$ defined by the compatibility conditions $d_i x_\ell = d_{\ell-1} x_i$ for all $i<\ell$ with $i,\ell\in F_j$.
\end{definition}

Define the horn map
\[
\Phi_j:X_p\to \operatorname{Horns}(p,j),\qquad \Phi_j(T)=(d_iT)_{i\in F_j},
\]
(viewing the tuple $(d_iT)_{i\in F_j}$ as an element of $\operatorname{Horns}(p,j)$). Its kernel is the horn kernel
\[
R_{p,j}(X)=\bigcap_{i\in F_j}\ker(d_i).
\]

\begin{definition}[Moore Filler Map]\label{def:moore_filler}
The \textbf{Moore filler map} $\mu_j: \operatorname{Horns}(p,j) \to D_p(X)$ is defined iteratively for a horn $H=(x_i)_{i\in F_j}$.

\emph{Phase I (Ascending indices $i<j$).} Initialize $T^{(-1)} := 0$. For $i=0,\dots,j-1$:
\[
T^{(i)} := T^{(i-1)} + s_i\bigl(x_i - d_i T^{(i-1)}\bigr).
\]

\emph{Phase II (Descending indices $i>j$).} Initialize $U^{(p+1)} := T^{(j-1)}$. For $i=p,\dots,j+1$:
\[
U^{(i)} := U^{(i+1)} + s_{i-1}\bigl(x_i - d_i U^{(i+1)}\bigr).
\]

\emph{Output.} $\mu_j(H) := U^{(j+1)}$.
\end{definition}

By construction, $\mu_j(H) \in D_p(X)$. The map $\mu_j$ is $A$-linear and satisfies
\[
\Phi_j\circ\mu_j=\operatorname{id}_{\operatorname{Horns}(p,j)}.
\]
see Weibel~\cite[Lemma~8.2.6]{Weibel} or Duskin~\cite[Lemma 3.1]{Duskin1979}.

\begin{proposition}[Exactness of the Horn Sequence]\label{prop:3term}
The sequence
\[
0\longrightarrow R_{p,j}(X) \longrightarrow X_p \xrightarrow{\ \Phi_j\ } \operatorname{Horns}(p,j) \longrightarrow 0
\]
is exact. 
\end{proposition}

\begin{proof}
Exactness at $R_{p,j}(X)$ and $X_p$ is by definition.
Surjectivity of $\Phi_j$ follows since  $\mu_j$ is a section.
\end{proof}

\begin{corollary}\label{cor:horn_decomp_general}
The Moore filler map $\mu_j$ induces a direct sum decomposition
\[
X_p = R_{p,j}(X) \oplus \operatorname{im}(\mu_j).
\]
Furthermore, $\operatorname{im}(\mu_j) \subseteq D_p(X)$.
\end{corollary}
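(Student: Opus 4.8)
The plan is to recognize Corollary~\ref{cor:horn_decomp_general} as the splitting lemma applied to the short exact sequence of Proposition~\ref{prop:3term}, with the Moore filler map $\mu_j$ serving as an explicit linear section of $\Phi_j$. Concretely, the only facts I need are: (i) $\Phi_j$ is $A$-linear with kernel $R_{p,j}(X)$ (Proposition~\ref{prop:3term}); (ii) $\mu_j$ is $A$-linear (each step in Definition~\ref{def:moore_filler} is built from the linear operators $s_i$, $d_i$, and addition); (iii) $\Phi_j\circ\mu_j=\operatorname{id}_{\operatorname{Horns}(p,j)}$, i.e.\ $\mu_j(H)$ really is a filler of $H$; and (iv) $\mu_j(0)=0$.

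First I would justify (iii) and (iv). That $d_i\mu_j(H)=x_i$ for all $i\in F_j$ is exactly the correctness of the Moore horn-filler algorithm already invoked in Proposition~\ref{prop:3term}: in Phase~I one checks inductively, using the simplicial identities $d_i s_i=\operatorname{id}$ and the commutation relations between $d_a$ and $s_b$ for $a,b$ on either side of the pivot, that after the step at index $i$ one has $d_i T^{(i)}=x_i$ while the faces already matched at smaller indices are undisturbed; Phase~II performs the symmetric correction for indices $i>j$, again leaving the earlier faces fixed, and this uses the horn compatibility relations $d_i x_\ell=d_{\ell-1}x_i$. Rather than reproduce this bookkeeping I would cite Weibel~\cite[Lemma~8.2.6]{Weibel} / Duskin~\cite[Lemma~3.1]{Duskin1979} as in Proposition~\ref{prop:3term}. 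For (iv), feeding $H=0$ into Definition~\ref{def:moore_filler} makes each increment $s_i(x_i-d_i(\cdot))$ vanish by induction, so $\mu_j(0)=0$.

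Next I would assemble the decomposition. For $T\in X_p$ write $T=\bigl(T-\mu_j\Phi_j(T)\bigr)+\mu_j\Phi_j(T)$. Applying $\Phi_j$ and using $\Phi_j\mu_j=\operatorname{id}$ gives $\Phi_j\bigl(T-\mu_j\Phi_j(T)\bigr)=\Phi_j(T)-\Phi_j(T)=0$, so the first summand lies in $\ker\Phi_j=R_{p,j}(X)$, while the second lies in $\operatorname{im}(\mu_j)$ by definition; hence $X_p=R_{p,j}(X)+\operatorname{im}(\mu_j)$. For directness, let $x\in R_{p,j}(X)\cap\operatorname{im}(\mu_j)$ and write $x=\mu_j(H)$; then $H=\Phi_j\mu_j(H)=\Phi_j(x)=0$ since $x\in\ker\Phi_j$, so $x=\mu_j(0)=0$ by (iv). Thus the sum is direct, and since $\mu_j$ is linear, $\operatorname{im}(\mu_j)$ is an $A$-submodule. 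The inclusion $\operatorname{im}(\mu_j)\subseteq D_p(X)$ is immediate from Definition~\ref{def:moore_filler}: every term added in Phases~I and~II lies in the image of some degeneracy $s_r$, and $D_p(X)=\sum_r\operatorname{im}(s_r)$ is a submodule closed under the sums taken.

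The only real obstacle is the inductive verification in (iii) that the Phase~I/Phase~II corrections preserve previously matched faces, which is a careful—but entirely standard—exercise in the simplicial identities relative to the pivot $j$; since Proposition~\ref{prop:3term} already certifies surjectivity of $\Phi_j$ via precisely this algorithm, I would treat $\Phi_j\mu_j=\operatorname{id}$ as established there and confine the proof of the corollary to the short splitting argument above.
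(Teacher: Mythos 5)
Your proposal is correct and takes essentially the same route as the paper: both arguments observe that $\mu_j$ is a linear section of $\Phi_j$, so the short exact sequence of Proposition~\ref{prop:3term} splits, and the containment $\operatorname{im}(\mu_j)\subseteq D_p(X)$ is read off from Definition~\ref{def:moore_filler}. You simply unwind the splitting lemma explicitly (the decomposition $T=(T-\mu_j\Phi_j T)+\mu_j\Phi_j T$ and the triviality of the intersection), whereas the paper states the splitting in one line; also note that your item (iv) $\mu_j(0)=0$ is already a consequence of your item (ii) linearity, so it needn't be established separately.
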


\begin{proof}
Since $\mu_j$ is a section of $\Phi_j$ ($\Phi_j \circ \mu_j = \operatorname{id}_{\operatorname{Horns}(p,j)}$), the exact sequence in Proposition~\ref{prop:3term} splits, yielding the decomposition.
The inclusion follows from Definition~\ref{def:moore_filler}.
\end{proof}


\section{Combinatorics and Classification}
\label{sec:combinatorics}

We apply the combinatorial characterization of the horn kernel (Section~\ref{sec:horns}) to classify the DSTM $X_\bullet(\vec s; A)$ and derive exact rank formulas.

\subsection{Classification Theorem}

We establish the main classification result based on the tensor order $k$ and the simplicial dimension $n=\min(\vec{s})-1$.

\begin{definition}[Strict Algebraic $n$-Hypergroupoid]\label{def:n-hypergroupoid}
A simplicial module $X_\bullet$ is a \textbf{strict algebraic $n$-hypergroupoid} if fillers are unique in dimensions strictly greater than $n$, and not unique in dimension $n$. That is:
\begin{enumerate}
    \item $R_{p,j}(X) = 0$ for all $p>n$ and all $j$.
    \item $R_{n,j}(X) \ne 0$ for at least one $j$.
\end{enumerate}
\end{definition}

\begin{remark}[Relation to Duskin and Glenn]\label{rem:nlab-hypergroupoid}
Condition~\emph{(1)} in Definition~\ref{def:n-hypergroupoid} is exactly the
$n$-hypergroupoid condition of Duskin and Glenn:
all horns in dimensions $p>n$ admit unique fillers. Condition~\emph{(2)}
adds a non-uniqueness requirement in dimension $n$: there exists $j$ with
$R_{n,j}(X)\neq 0$. Thus a strict algebraic $n$-hypergroupoid in our sense is
an $n$-hypergroupoid in which horn fillers are non-unique in dimension $n$
and unique in all dimensions $p>n$.
\end{remark}

\begin{theorem}[Hypergroupoid Classification]\label{thm:classification}
$X_\bullet(\vec{s};A)$ is a strict algebraic $n$-hypergroupoid if and only if $k=n$.
\end{theorem}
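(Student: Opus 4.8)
The plan is to reduce the statement to the already-established combinatorial facts about the horn kernels, specifically Corollary~\ref{cor:missing_indices_existence}, which says $R_{p,j}(X)\neq 0$ if and only if $k\ge p$ (independently of $j$). Recall $n=\min(\vec s)-1$, so $n$ is fixed by the shape and $k$ is the tensor order; the two conditions in Definition~\ref{def:n-hypergroupoid} must be checked against this criterion.

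First I would prove the ``if'' direction. Assume $k=n$. For the uniqueness condition~(1), let $p>n=k$. Then $k<p$, so by Corollary~\ref{cor:missing_indices_existence} we have $R_{p,j}(X)=0$ for every $j$; hence fillers are unique in all dimensions $p>n$. For the non-uniqueness condition~(2), take $p=n=k$; since $k\ge n$ (indeed $k=n$), Corollary~\ref{cor:missing_indices_existence} gives $R_{n,j}(X)\neq 0$ for every $j$, in particular for at least one $j$. Thus both clauses of Definition~\ref{def:n-hypergroupoid} hold and $X_\bullet(\vec s;A)$ is a strict algebraic $n$-hypergroupoid. One small point worth noting here: $n\ge 1$ is needed for dimension $n$ to be a legitimate simplicial dimension in which horns are defined (horns require $p\ge 1$); since $k\ge 1$ always and $k=n$, this is automatic, but I would state it explicitly to avoid the vacuous edge case $n=0$.

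Next the ``only if'' direction, which I would prove by contraposition: if $k\neq n$, then $X_\bullet(\vec s;A)$ fails to be a strict algebraic $n$-hypergroupoid. Split into two cases. If $k<n$, then $k<n$ means $k\le n-1<n$, so in particular $R_{n,j}(X)=0$ for all $j$ (apply Corollary~\ref{cor:missing_indices_existence} with $p=n$, since $k<n$), violating condition~(2). If $k>n$, choose $p=k$; then $p>n$ and $k\ge p$, so $R_{p,j}(X)\neq 0$, violating condition~(1). (Alternatively, even $p=n+1\le k$ already witnesses the failure of~(1).) Either way the conjunction of~(1) and~(2) fails, so $k\neq n$ implies $X_\bullet(\vec s;A)$ is not a strict algebraic $n$-hypergroupoid. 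Combining the two directions yields the equivalence.

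I do not expect a genuine obstacle here: the theorem is essentially a bookkeeping consequence of the sharp threshold $R_{p,j}(X)\neq 0 \iff k\ge p$. The only thing requiring mild care is making sure the quantifiers in Definition~\ref{def:n-hypergroupoid} line up with the ``independent of $j$'' strength of Corollary~\ref{cor:missing_indices_existence}: condition~(1) is a ``for all $j$'' statement and condition~(2) is an ``exists $j$'' statement, but because the corollary's criterion does not depend on $j$ at all, both are controlled by the single numerical comparison between $k$ and the dimension, so no subtlety arises. I would also remark in passing that Theorem~\ref{thm:classification} dovetails with the abstract of the paper: the case $k=n$ is exactly where horn fillers in dimension $n$ are non-unique while all higher fillers are unique, matching the Horn Non-Degeneracy Lemma's decomposition $X_n=R_{n,j}\oplus D_n$.
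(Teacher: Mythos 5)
Your proof is correct and follows essentially the same route as the paper: both reduce to Corollary~\ref{cor:missing_indices_existence} ($R_{p,j}\neq 0 \iff k\ge p$) and then translate the two clauses of Definition~\ref{def:n-hypergroupoid} into the inequalities $k\le n$ and $k\ge n$. The paper compresses this into a direct biconditional rather than your if/only-if case split, but the substance is identical, and your remark about the $n=0$ edge case is a reasonable (if optional) extra precaution.
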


\begin{proof}
We use the criterion established in Corollary~\ref{cor:missing_indices_existence}: for each $p$ and $j$, $R_{p,j} \ne 0$ if and only if $k \ge p$.

Condition (1) requires $R_{p,j}=0$ for all $p>n$. This is equivalent to $k < p$ for all $p \ge n+1$. This holds if and only if $k < n+1$, i.e., $k \le n$.

Condition (2) requires $R_{n,j}\ne 0$. This is equivalent to $k \ge n$.

$X_\bullet$ is a strict algebraic $n$-hypergroupoid if and only if both conditions hold: $(k \le n) \land (k \ge n)$, which is equivalent to $k=n$.
\end{proof}

\subsection{Rank Formulas via Inclusion-Exclusion}

We analyze the ranks of the components of the DSTM. Since $X_p(\vec{s};A)$ is a free $A$-module and the normalization projections are defined over $\mathbb{Z}$ (Remark~\ref{rem:EM_idempotents}), the ranks of the submodules $R_{p,j}, N_p, D_p$ are independent of the ring $A$.

Let $N_a=n_a-1$. Recall $n=\min_a N_a$.

\begin{proposition}[Rank of $R_{p,j}$]\label{prop:rank-IE}
Let $F_j=[p]\setminus\{j\}$. The rank of $R_{p,j}(X)$ is
\[
\rank R_{p,j}
=\sum_{t=0}^{p}(-1)^t\binom{p}{t}\prod_{a=1}^k(M_a(p)+1-t).
\]
\end{proposition}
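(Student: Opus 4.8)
The plan is to compute $\rank R_{p,j}$ by counting the missing-index set $M_{p,j}$, using the combinatorial characterization already established. By Theorem~\ref{thm:support_characterization}, $R_{p,j}$ is free with basis $\{E_m\}_{m\in M_{p,j}}$, so $\rank R_{p,j}=|M_{p,j}|$; and since this count is purely combinatorial it is independent of $A$, consistent with the remark preceding the statement. By Proposition~\ref{prop:missing_indices}, $m=(m_1,\dots,m_k)\in I_p$ lies in $M_{p,j}$ if and only if $\operatorname{im}(m)\supseteq F_j$, i.e. every element of $F_j=[p]\setminus\{j\}$ appears among the coordinates $m_1,\dots,m_k$. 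So the task reduces to counting tuples in $I_p=\prod_{a=1}^k[M_a(p)]$ whose coordinate-set contains the fixed $p$-element set $F_j$.

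The main step is an inclusion--exclusion over which elements of $F_j$ are \emph{omitted}. For a subset $S\subseteq F_j$, let $A_S$ be the set of $m\in I_p$ with $\operatorname{im}(m)\cap S=\emptyset$, i.e. no coordinate of $m$ takes a value in $S$. Because $I_p$ is a product and each factor $[M_a(p)]=\{0,1,\dots,M_a(p)\}$ contains all of $F_j\subseteq[p]$ (using $M_a(p)\ge p$), forbidding the $|S|$ values in $S$ on axis $a$ leaves exactly $M_a(p)+1-|S|$ choices, so $|A_S|=\prod_{a=1}^k(M_a(p)+1-|S|)$. The tuples with $\operatorname{im}(m)\supseteq F_j$ are precisely those lying in none of the $A_{\{f\}}$, $f\in F_j$, so by inclusion--exclusion
\[
|M_{p,j}|=\sum_{S\subseteq F_j}(-1)^{|S|}\,|A_S|
 =\sum_{t=0}^{p}(-1)^t\binom{p}{t}\prod_{a=1}^k(M_a(p)+1-t),
\]
where in the last equality we group subsets $S$ by their size $t=|S|$, of which there are $\binom{|F_j|}{t}=\binom{p}{t}$, and note that $|A_S|$ depends only on $t$. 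This is the claimed formula, and the right-hand side manifestly does not depend on $j$.

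The only point needing care is the range of $t$ and the sign of the factors. When $t>M_a(p)+1$ for some $a$ the product vanishes, but since $M_a(p)\ge p$ we have $M_a(p)+1-t\ge p+1-t\ge 1$ for all $t\le p$, so every factor in the sum is a strictly positive integer and the summation range $0\le t\le p$ is exactly right (terms with $t>p$ would correspond to $S$ larger than $F_j$ and do not occur). I do not anticipate a genuine obstacle here: the content is entirely in Proposition~\ref{prop:missing_indices}, and what remains is the standard sieve. If desired, one can additionally remark that in the constant-shape case $M_a(p)=p$ the formula becomes $\sum_{t=0}^p(-1)^t\binom{p}{t}(p+1-t)^k$, the $p$-th finite difference of $x^k$ evaluated appropriately, which connects to the Stirling-number expression mentioned in the introduction; but this is a corollary, not part of the present proof.
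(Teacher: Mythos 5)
Your proposal is correct and follows essentially the same route as the paper: reduce to counting $|M_{p,j}|$ via Theorem~\ref{thm:support_characterization}, characterize missing indices by Proposition~\ref{prop:missing_indices}, and apply inclusion--exclusion over subsets $S\subseteq F_j$ to get $|A_S|=\prod_a(M_a(p)+1-|S|)$. The extra remarks about the range of $t$ and the constant-shape specialization are correct but supplementary.
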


\begin{proof}
By Theorem~\ref{thm:support_characterization} and Proposition~\ref{prop:missing_indices}, the rank is the count of $m\in I_p$ such that $\operatorname{im}(m)\supseteq F_j$. We use inclusion--exclusion on $I_p = \prod [M_a(p)]$.

For a subset $S\subseteq F_j$ with $|S|=t$, we count the indices $m\in I_p$ such that $\operatorname{im}(m) \cap S = \emptyset$. Since $M_a(p) \ge p$, $S \subseteq [M_a(p)]$. The number of choices for the $a$-th component is $|[M_a(p)] \setminus S| = M_a(p)+1-t$. The total count of such indices is $\prod_{a=1}^k (M_a(p)+1-t)$.

Since $|F_j|=p$, the inclusion--exclusion principle yields the formula for the count of indices covering $F_j$.
\end{proof}

We analyze the normalized complex $(N_\bullet, d_0)$. The cycles are $Z_p(N_\bullet) = \ker(d_0: N_p \to N_{p-1}) = \bigcap_{i=0}^p \ker d_i$.

\begin{corollary}[Rank of $Z_p(N_\bullet)$]\label{cor:rank-Zp}
The rank of the normalized cycles $Z_p(N_\bullet)$ is
\[
\rank Z_p(N_\bullet) = \sum_{t=0}^{p+1}(-1)^t\binom{p+1}{t}\prod_{a=1}^k(M_a(p)+1-t).
\]
\end{corollary}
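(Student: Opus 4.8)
The plan is to realize $Z_p(N_\bullet)$ as a horn kernel in one higher artificial dimension, or more directly, to observe that $Z_p(N_\bullet) = \bigcap_{i=0}^{p}\ker d_i^p$ is the ``full'' face kernel and apply the same basis argument that produced Proposition~\ref{prop:rank-IE}. First I would record that, by the Support Characterization (Theorem~\ref{thm:support_characterization}) extended to the full intersection, $\bigcap_{i=0}^p\ker d_i^p$ has a basis $\{E_m\}$ indexed by those $m\in I_p$ with $m\notin\operatorname{im}(\Delta_i^p)$ for \emph{all} $i\in[p]$ (not just $i\in F_j$). By the equivalence established in the proof of Proposition~\ref{prop:missing_indices}, $m\notin\operatorname{im}(\Delta_i^p)\iff i\in\operatorname{im}(m)$, so the condition becomes $[p]\subseteq\operatorname{im}(m)$, i.e.\ the multi-index $m$ is surjective onto the full vertex set $[p]=\{0,1,\dots,p\}$, which has $p+1$ elements.

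Next I would count $|\{m\in I_p : \operatorname{im}(m)\supseteq[p]\}|$ by inclusion--exclusion over the $p+1$ ``forbidden-omission'' events, exactly as in Proposition~\ref{prop:rank-IE} but now with $|[p]|=p+1$ instead of $|F_j|=p$. For a subset $S\subseteq[p]$ with $|S|=t$, the number of $m\in I_p=\prod_a[M_a(p)]$ avoiding $S$ in every coordinate is $\prod_{a=1}^k(M_a(p)+1-t)$, using $M_a(p)\ge p$ so that $S\subseteq[M_a(p)]$. Summing with signs $(-1)^t\binom{p+1}{t}$ over $t=0,\dots,p+1$ gives the claimed formula. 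This is essentially a one-line adaptation; the only thing to be careful about is that when $t=p+1$ we get $\prod_a(M_a(p)+1-(p+1))=\prod_a(M_a(p)-p)=\prod_a(n_a-1-n)$, which can be nonzero (it is zero exactly when some $n_a=n+1$, i.e.\ the minimum is attained on axis $a$), so the top term of the alternating sum should be retained.

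I would also supply the brief bridge that makes this rigorous rather than merely analogous: the Basis of the Face Kernel (Theorem~\ref{thm:kernel_basis}) gives $\ker d_i^p=\operatorname{span}_A\{E_m: m\notin\operatorname{im}(\Delta_i^p)\}$ for every single $i$, and each of these is a coordinate subspace; the intersection $\bigcap_{i=0}^p\ker d_i^p=Z_p(N_\bullet)$ is therefore the coordinate subspace spanned by $\{E_m: \forall i\in[p],\ m\notin\operatorname{im}(\Delta_i^p)\}$, precisely as in the proof of Theorem~\ref{thm:support_characterization}, just with the index set $F_j$ replaced by $[p]$. Since the $E_m$ form a basis and all maps $\Delta_i^p$ are defined over $\mathbb{Z}$, the rank is independent of $A$, consistent with the remark preceding Proposition~\ref{prop:rank-IE}.

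I do not expect a genuine obstacle here; the content is entirely contained in already-proven statements, and the proof is a direct parallel to that of Proposition~\ref{prop:rank-IE} with $p\rightsquigarrow p+1$ in the cardinality of the covered set. The only mild subtlety worth a sentence is sanity-checking consistency with Corollary~\ref{cor:missing_indices_existence}: $Z_p(N_\bullet)=\bigcap_{i=0}^p\ker d_i$ equals $R_{p+1,j}$-type data shifted, and one should note that the formula can be nonzero even when $R_{p,j}=0$ (e.g.\ if $k<p$ but $k\ge$ something governed by the $M_a$), which is precisely why the normalized complex can have nontrivial cycles in low degrees despite unique horn fillers — a point the subsequent homology dichotomy will exploit. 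I would close by remarking that in the constant-shape case $M_a(p)=p$ this specializes to $\sum_{t=0}^{p+1}(-1)^t\binom{p+1}{t}(p+1-t)^k$, the surjection count $k!\,\StirlingII{k}{p+1}$, matching the Stirling-number description advertised in the introduction.
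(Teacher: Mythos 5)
Your proof is correct and takes exactly the paper's approach: characterize $Z_p(N_\bullet)$ as the coordinate subspace spanned by $\{E_m : [p]\subseteq\operatorname{im}(m)\}$ (via Theorem~\ref{thm:kernel_basis} and the equivalence in Proposition~\ref{prop:missing_indices}) and count by inclusion--exclusion over the covered set $[p]$ of size $p+1$; the paper simply states this in one line. One remark in your closing aside is backwards, however: since $Z_p(N_\bullet)=\bigcap_{i=0}^p\ker d_i\subseteq\bigcap_{i\ne j}\ker d_i=R_{p,j}$ for every $j$, $R_{p,j}=0$ forces $Z_p=0$, so the formula cannot be nonzero when $R_{p,j}$ vanishes; the non-vanishing threshold is in fact \emph{stricter} ($Z_p\ne 0$ iff $k\ge p+1$, versus $R_{p,j}\ne 0$ iff $k\ge p$, since a surjection onto a $(p+1)$-element set needs $k\ge p+1$ coordinates), and it is the containment $Z_p\subseteq R_{p,j}$ that underlies the injection in Corollary~\ref{cor:dichotomy}(3), not any independence of the two conditions.
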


\begin{proof}
$Z_p(N_\bullet)$ is spanned by basis elements $E_m$ such that $\operatorname{im}(m) \supseteq [p]$. The formula follows by applying inclusion--exclusion with the covered set $[p]$ (size $p+1$).
\end{proof}

\subsection{Constant Shape and Stirling Numbers}

If the shape is constant, then $n_a = n+1$ for all $a$, where $n$ is the
simplicial dimension. In this case $N_a = n$ and hence $M_a(p) = p$ for all $a$.

The formulas simplify using the Stirling numbers of the second kind
$\StirlingII{k}{m}$ (see, e.g., Stanley~\cite{StanleyEC1}).

\begin{theorem}[Ranks for Constant Shape]\label{thm:Stirling}
If $M_a(p)=p$ for all $a$ (i.e., $I_p=[p]^k$), the ranks of the components of the Moore complex in degree $p$ are:
\begin{align*}
\rank Z_p(N_\bullet) &= (p+1)!\,\StirlingII{k}{p+1}, \\
\rank N_p(X) &= p!\,\StirlingII{k}{p} + (p+1)!\,\StirlingII{k}{p+1}.
\end{align*}
\end{theorem}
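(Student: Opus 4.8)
The plan is to reduce both rank formulas to the inclusion--exclusion identities already available (Proposition~\ref{prop:rank-IE} and Corollary~\ref{cor:rank-Zp}) and then recognize the resulting alternating sums as classical expansions of Stirling numbers of the second kind. First I would substitute $M_a(p)=p$ into Corollary~\ref{cor:rank-Zp}, obtaining
\[
\operatorname{Rank} Z_p(N_\bullet)=\sum_{t=0}^{p+1}(-1)^t\binom{p+1}{t}(p+1-t)^k
=\sum_{r=0}^{p+1}(-1)^{\,p+1-r}\binom{p+1}{r}r^k,
\]
after the reindexing $r=p+1-t$. This is exactly $(p+1)!\,\StirlingII{k}{p+1}$ by the standard surjection-counting formula
\[
m!\,\StirlingII{k}{m}=\sum_{r=0}^{m}(-1)^{m-r}\binom{m}{r}r^k,
\]
with $m=p+1$ (see Stanley~\cite{StanleyEC1}). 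I would cite this identity rather than re-derive it; if a derivation is wanted, it follows from counting surjections $[k]\twoheadrightarrow[m]$ by inclusion--exclusion on the image.

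For $N_p(X)$ the idea is to avoid a direct inclusion--exclusion on $N_p$ and instead use the short exact sequence of the boundary map on the normalized complex. By Corollary~\ref{cor:inj-face}, $d_0$ restricted to $N_p(X)$ is surjective onto... actually more cleanly: the normalization gives $N_p(X)=Z_p(N_\bullet)\oplus C$ with $C\cong d_0(N_p(X))$, and $d_0(N_p(X))=Z_{p-1}(N_\bullet)$ since in a simplicial module the normalized complex has $d_0\colon N_p\twoheadrightarrow Z_{p-1}$ when $X_\bullet$ is contractible (or, unconditionally, $d_0(N_p)\subseteq Z_{p-1}$ and equality can be read off from the rank count). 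The safest route is the rank identity $\operatorname{Rank} N_p=\operatorname{Rank} Z_p(N_\bullet)+\operatorname{Rank} Z_{p-1}(N_\bullet)$, valid because $N_\bullet$ is exact (contractibility, \ref{app:filtration}) so $N_p/Z_p\cong Z_{p-1}$. Then
\[
\operatorname{Rank} N_p(X)=(p+1)!\,\StirlingII{k}{p+1}+p!\,\StirlingII{k}{p},
\]
which is the claimed formula.

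The main obstacle is justifying the additive relation $\operatorname{Rank} N_p=\operatorname{Rank} Z_p(N_\bullet)+\operatorname{Rank} Z_{p-1}(N_\bullet)$ cleanly at this point in the paper, since contractibility is only proved in \ref{app:filtration}. Two options: (i) forward-reference the appendix, noting ranks are ring-independent (stated above) so it suffices to work over $\mathbb{Q}$ where exactness of $N_\bullet$ gives the dimension count; or (ii) give a self-contained computation of $\operatorname{Rank} N_p$ by a direct inclusion--exclusion. For (ii), $N_p(X)=\bigcap_{i=1}^p\ker d_i$ is spanned by $E_m$ with $\{1,\dots,p\}\subseteq\operatorname{im}(m)$, i.e. $m\in[p]^k$ hitting every element of $F_0=\{1,\dots,p\}$, with the $0$-coordinate unconstrained; inclusion--exclusion on the covered set $[p]\setminus\{0\}$ (size $p$) gives $\sum_{t=0}^p(-1)^t\binom{p}{t}(p+1-t)^k$. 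Splitting off the "$0$ is used" versus "$0$ is not used" cases, or simply using the Pascal identity $\binom{p}{t}$ combined with the algebraic identity relating $\sum(-1)^t\binom{p}{t}(p+1-t)^k$ to $p!\StirlingII{k}{p}+(p+1)!\StirlingII{k}{p+1}$, recovers the result without invoking contractibility. I would present option (ii) for self-containedness, deriving the single identity
\[
\sum_{t=0}^{p}(-1)^t\binom{p}{t}(p+1-t)^k
=p!\,\StirlingII{k}{p}+(p+1)!\,\StirlingII{k}{p+1},
\]
either by Pascal's rule $\binom{p}{t}=\binom{p-1}{t}+\binom{p-1}{t-1}$ bootstrapped against Corollary~\ref{cor:rank-Zp} at levels $p$ and $p-1$, or by writing $(p+1-t)^k=(p+1-t)\cdot(p+1-t)^{k-1}$ and unwinding — a short, routine manipulation I would not spell out in full.
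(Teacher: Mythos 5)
Your proposal is correct and shares the paper's starting point — specialize the inclusion--exclusion formulas of Proposition~\ref{prop:rank-IE} and Corollary~\ref{cor:rank-Zp} to $M_a(p)=p$ — but you finish the two computations differently from the paper. For $Z_p(N_\bullet)$, your reindexing to recover the surjection-counting identity $m!\,\StirlingII{k}{m}=\sum_{r}(-1)^{m-r}\binom{m}{r}r^k$ is the same calculation the paper performs, just without the finite-difference packaging: that identity is precisely $\Delta^{m}(x^k)\big|_{x=0}$. For $N_p(X)$ the two proofs genuinely diverge. The paper expands $x^k=\sum_m\StirlingII{k}{m}x^{\underline m}$ in falling factorials, interprets the sum as $\Delta^{p}(x^k)\big|_{x=1}$, and uses $1^{\underline j}=0$ for $j\ge 2$ to pick out the $m=p$ and $m=p+1$ terms in one stroke; this gives a uniform device (evaluate $\Delta^{p+1}$ at $0$, $\Delta^p$ at $1$) that treats both formulas simultaneously. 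Your option~(i), deducing $\operatorname{Rank} N_p = \operatorname{Rank} Z_p + \operatorname{Rank} Z_{p-1}$ from exactness of the normalized complex, is valid but would invert the paper's logical order — the paper proves the Stirling formulas first and only then states a rank-consistency check against contractibility (Proposition immediately after Theorem~\ref{thm:Stirling}), precisely to avoid the forward reference you flag. Your option~(ii) is the self-contained route and does work: applying Pascal's rule $\binom{p}{t}=\binom{p+1}{t}-\binom{p}{t-1}$, the first piece contributes the $\StirlingII{k}{p+1}$ term (the extra $t=p+1$ term vanishes for $k\ge 1$) and the reindexed second piece contributes $p!\StirlingII{k}{p}$; but you leave this "routine manipulation" unexecuted, whereas the paper's falling-factorial expansion replaces the ad hoc algebra with a single displayed calculation. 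So: both approaches are sound, yours relies on classical Stirling identities plus either a Pascal manipulation or a forward reference to contractibility, while the paper's finite-difference calculus is a bit more economical and avoids both.
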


\begin{proof}
We interpret these ranks in terms of finite differences of the polynomial $P(x)=x^k$.

The rank of $Z_p(N_\bullet)$ specializes to
\[
\sum_{t=0}^{p+1}(-1)^t\binom{p+1}{t}(p+1-t)^k = \Delta^{p+1}(x^k)\big|_{x=0}.
\]
Using the expansion $x^k=\sum_{m=0}^{k}\StirlingII{k}{m}x^{\underline m}$.
The operator acts by $\Delta^{p+1}(x^{\underline m}) = m^{\underline{p+1}} x^{\underline{m-p-1}}$. 
Evaluating at $x=0$, only the term $m=p+1$ contributes (since $0^{\underline 0}=1$ and $0^{\underline{j}}=0$ for $j>0$), yielding $(p+1)!\,\StirlingII{k}{p+1}$.

The rank of $N_p(X) = R_{p,0}$ specializes to
\[
\sum_{t=0}^{p}(-1)^t\binom{p}{t}(p+1-t)^k = \Delta^{p}(x^k)\big|_{x=1}.
\]
Evaluating $\Delta^{p}(x^{\underline m}) = m^{\underline p} x^{\underline{m-p}}$ at $x=1$. The term $1^{\underline{j}}$ is $1$ if $j=0$ or $j=1$, and $0$ if $j\ge 2$.
If $m=p$, the contribution is $p! \StirlingII{k}{p}$.
If $m=p+1$, the contribution is $(p+1)! \StirlingII{k}{p+1}$.
\end{proof}

\subsection{Boundaries and Contractibility}

We analyze the boundaries $B_p(N_\bullet) = \operatorname{im}(d_0: N_{p+1} \to N_p)$.

\begin{proposition}[Non-triviality of Boundaries]\label{prop:Bn}
$B_p(N_\bullet)\ne 0$ if and only if $k\ge p+1$.
\end{proposition}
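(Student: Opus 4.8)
The plan is to reduce the statement about $B_p(N_\bullet)=\operatorname{im}(d_0:N_{p+1}\to N_p)$ to the already-established non-triviality criterion for horn kernels (Corollary~\ref{cor:missing_indices_existence}) together with contractibility of $X_\bullet(\vec s;A)$. The key observation is that $N_\bullet(X)$ is the Moore complex, its homology computes the homology of $X_\bullet$, and $X_\bullet(\vec s;A)$ is contractible by the equivariant shift-and-truncate homotopy of \ref{app:filtration}. Hence $N_\bullet(X)$ is acyclic (exact except possibly in degree $0$, where $H_0=A$), so for $p\ge 1$ we have $B_p(N_\bullet)=Z_p(N_\bullet)$, and for $p=0$ we have $B_0(N_\bullet)=\ker(\varepsilon)$ where $\varepsilon$ is the augmentation. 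Thus the rank of $B_p(N_\bullet)$ is computable from the rank of $Z_p(N_\bullet)$ (for $p\ge 1$) via Corollary~\ref{cor:rank-Zp}.

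First I would handle $p\ge 1$. By acyclicity $B_p(N_\bullet)=Z_p(N_\bullet)=\bigcap_{i=0}^p\ker d_i$, which by Theorem~\ref{thm:support_characterization} (applied with the full covered set $[p]$, as in the proof of Corollary~\ref{cor:rank-Zp}) has basis $\{E_m : \operatorname{im}(m)\supseteq [p]\}$. This set is nonempty if and only if there is a multi-index $m\in I_p$ whose coordinate set contains all of $\{0,\dots,p\}$, which requires $k\ge p+1$; conversely if $k\ge p+1$ the multi-index $(0,1,\dots,p,0,\dots,0)\in[p]^k\subseteq I_p$ witnesses nonemptiness. So for $p\ge 1$, $B_p(N_\bullet)\ne 0 \iff k\ge p+1$. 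Then I would treat $p=0$ separately: $N_0(X)=X_0=A^{I_0}$ and $B_0(N_\bullet)=\operatorname{im}(d_0:N_1\to N_0)$. Since $X_\bullet$ is contractible, $H_0(N_\bullet)=A$, so $B_0(N_\bullet)=\ker(\varepsilon:X_0\to A)$, which has rank $|I_0|-1$. The claim "$B_0\ne 0 \iff k\ge 1$" then amounts to $|I_0|\ge 2$, i.e. $\prod_a M_a(0)+1 \ge 2$; since $M_a(0)=N_a-n\ge 1$ always (indeed $M_a(0)\ge 0$ and in fact $\ge 1$ unless... check: $M_a(0)=n_a-1-n\ge 0$, with equality when $n_a-1=n$), we get $|I_0|\ge 2$ automatically, so $B_0\ne 0$ always — consistent with $k\ge 1$ being always true. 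Actually one should double-check the edge case $|I_0|=1$: this needs every $M_a(0)=0$, i.e. $n_a-1=n$ for all $a$, which is the constant minimal shape; then $I_0$ is a point and $X_0=A$, $B_0=0$, but also $k\ge 1$ holds, so the stated equivalence would fail at $p=0$ in that case. I expect the intended reading is $p\ge 1$, or the authors' convention makes $B_0$ work out; I would note this and, if needed, restrict attention to $p\ge 1$ where the argument is clean, or invoke $|I_0|\ge 2$ under whatever running hypothesis makes it so.

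The main obstacle is the dependence on contractibility of $X_\bullet(\vec s;A)$, proved only in \ref{app:filtration}; if one wants a self-contained argument at this point in the paper one must instead argue directly that $Z_p(N_\bullet)\subseteq B_p(N_\bullet)$. This can be done by explicitly exhibiting, for each basis cycle $E_m$ with $\operatorname{im}(m)\supseteq[p]$, a preimage in $N_{p+1}$ under $d_0$ — for instance via the Moore filler construction of Section~\ref{sec:normalization} applied to a suitable horn, or via the contracting homotopy restricted to the normalized complex. Either way the combinatorial heart — counting multi-indices whose coordinate set covers $[p]$, which is nonempty exactly when $k\ge p+1$ — is identical to the argument already used in Corollary~\ref{cor:missing_indices_existence}, so the proposition follows once the homological input (acyclicity of $N_\bullet$, equivalently $Z_p=B_p$ for $p\ge 1$) is in place.
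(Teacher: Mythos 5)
Your plan takes a genuinely different route from the paper's, which argues directly and makes no appeal to contractibility at this point. For the converse direction the paper simply notes that $k<p+1$ forces $N_{p+1}(X)=R_{p+1,0}(X)=0$ by Corollary~\ref{cor:missing_indices_existence}, hence $B_p=0$. For the forward direction, when $k\ge p+1$ the paper picks a single multi-index $m\in I_{p+1}$ with $\operatorname{im}(m)=\{1,\dots,p+1\}$: then $E_m\in N_{p+1}=R_{p+1,0}$ because $\operatorname{im}(m)\supseteq\{1,\dots,p+1\}$, and $d_0 E_m\ne 0$ because $0\notin\operatorname{im}(m)$, exhibiting a nonzero boundary directly. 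Your route instead invokes contractibility (Theorem~\ref{thm:contractible}, proved only in the appendix) to identify $B_p$ with $Z_p$ and then counts multi-indices covering $[p]$. Both are valid in principle; the paper's is more self-contained and avoids a forward reference, and the direct fallback you sketch at the end is essentially the paper's argument.

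There is, however, a concrete error in your treatment of $p=0$. You assume $H_0(N_\bullet)\cong A$, so that $B_0(N_\bullet)=\ker(\varepsilon)$ has rank $|I_0|-1$, and conclude that the equivalence would fail in the constant minimal shape case $|I_0|=1$ (where $X_0\cong A$). That premise is wrong for the DSTM: the contraction identity at $p=0$ reads $\partial_1 H_0=\mathrm{id}_{X_0}$, so $\partial_1$ is surjective and $H_0(X_\bullet)=0$, not $A$; by the normalization theorem $H_0(N_\bullet)=0$ as well, hence $B_0(N_\bullet)=N_0=X_0\ne 0$ for every shape. The equivalence ``$B_0\ne 0\iff k\ge 1$'' therefore does hold at $p=0$, with both sides always true. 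Concretely, even when $I_0$ is a single point, $E_{(1,\dots,1)}\in N_1$ and $d_0 E_{(1,\dots,1)}=E_{(0,\dots,0)}\ne 0$. So the edge case you flagged does not break the statement; your acyclicity computation at $p=0$ was simply using an augmented convention that the paper's complex does not follow.
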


\begin{proof}
If $k < p+1$, then $N_{p+1}(X)=0$ by Corollary~\ref{cor:missing_indices_existence} (since $N_{p+1}=R_{p+1,0}$), so $B_p(N_\bullet)=0$.

If $k\ge p+1$, we construct a non-zero boundary. We seek $T \in N_{p+1}(X)$ such that $d_0(T) \ne 0$.
We require an index $m\in I_{p+1}$ such that $E_m \in N_{p+1}(X)$ and $d_0(E_m) \ne 0$.
$E_m \in N_{p+1}(X)$ requires $\operatorname{im}(m) \supseteq \{1, \dots, p+1\}$.
$d_0(E_m) \ne 0$ requires $m \in \operatorname{im}(\Delta_0^{p+1})$, which is equivalent to $0 \notin \operatorname{im}(m)$.
Thus we seek $m$ such that $\operatorname{im}(m)=\{1,\dots,p+1\}$. Since $k\ge p+1$, such an $m$ exists. Since $M_a(p+1) \ge p+1$, $m \in I_{p+1}$.
Therefore $B_p(N_\bullet) \ne 0$.
\end{proof}

The DSTM $X_\bullet$ is contractible (Theorem~\ref{thm:contractible}). Consequently, $H_*(X_\bullet)=0$, implying $Z_p(N_\bullet) = B_p(N_\bullet)$ for all $p$. The differential $d_0: N_{p+1} \to N_p$ maps surjectively onto $Z_p(N_\bullet)$ with kernel $Z_{p+1}(N_\bullet)$.

\begin{proposition}[Rank Consistency Check]
The rank formulas derived satisfy the consistency condition required by contractibility (Rank-Nullity):
\[
\operatorname{Rank} N_{p+1}(X) = \operatorname{Rank} Z_{p+1}(N_\bullet) + \operatorname{Rank} Z_p(N_\bullet).
\]
\end{proposition}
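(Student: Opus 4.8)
The plan is to read the identity off the rank--nullity theorem applied to the normalized differential $d_0\colon N_{p+1}(X)\to N_p(X)$, using the contractibility of $X_\bullet$ (Theorem~\ref{thm:contractible}) to replace $\operatorname{Rank}B_p(N_\bullet)$ by $\operatorname{Rank}Z_p(N_\bullet)$. Since the Eilenberg--Mac Lane idempotents and the face maps are defined over $\mathbb{Z}$, the ranks of $N_p(X)$, $Z_p(N_\bullet)$ and $B_p(N_\bullet)$ are independent of $A$, so I would first reduce to $A=\mathbb{Q}$, where all three modules are finite-dimensional $\mathbb{Q}$-vector spaces and $d_0$ restricts to a $\mathbb{Q}$-linear map between them. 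The only point to verify here is that $d_0$ really does carry $N_{p+1}(X)$ into $N_p(X)$: if $x\in N_{p+1}(X)=\bigcap_{r=1}^{p+1}\ker d_r$, then for $1\le i\le p$ the simplicial identity $d_i d_0=d_0 d_{i+1}$ gives $d_i(d_0 x)=d_0(d_{i+1}x)=0$, so $d_0x\in\bigcap_{i=1}^{p}\ker d_i=N_p(X)$.

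Next I would identify the kernel and image of this restricted map. Its kernel is $N_{p+1}(X)\cap\ker d_0=\bigcap_{i=0}^{p+1}\ker d_i$, which is exactly $Z_{p+1}(N_\bullet)$ by Definition~\ref{def:normalized_degenerate} and the description of normalized cycles in Section~\ref{sec:combinatorics}; its image is $B_p(N_\bullet)$ by definition. Rank--nullity over $\mathbb{Q}$ then yields
\[
\operatorname{Rank}N_{p+1}(X)=\operatorname{Rank}Z_{p+1}(N_\bullet)+\operatorname{Rank}B_p(N_\bullet).
\]
Finally, Theorem~\ref{thm:contractible} gives $H_*(X_\bullet)=0$, hence $H_p(N_\bullet)=Z_p(N_\bullet)/B_p(N_\bullet)=0$, so $B_p(N_\bullet)=Z_p(N_\bullet)$ and in particular $\operatorname{Rank}B_p(N_\bullet)=\operatorname{Rank}Z_p(N_\bullet)$. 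Substituting into the displayed identity gives the claim.

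As a purely combinatorial cross-check (not logically needed, but confirming that the explicit formulas are internally consistent), one can verify the identity directly from Proposition~\ref{prop:rank-IE} and Corollary~\ref{cor:rank-Zp}. Writing $f(y):=\prod_{a=1}^k\bigl(M_a(p)+y\bigr)$ and letting $\nabla$ denote the backward difference operator, the formulas read $\operatorname{Rank}R_{p,0}=(\nabla^{p}f)(1)$ and $\operatorname{Rank}Z_p(N_\bullet)=(\nabla^{p+1}f)(1)$; using $M_a(p+1)=M_a(p)+1$ one gets $\operatorname{Rank}N_{p+1}(X)=\operatorname{Rank}R_{p+1,0}=(\nabla^{p+1}f)(2)$ and $\operatorname{Rank}Z_{p+1}(N_\bullet)=(\nabla^{p+2}f)(2)$, and the required equality is precisely $(\nabla^{p+2}f)(2)=(\nabla^{p+1}f)(2)-(\nabla^{p+1}f)(1)$, i.e.\ the recursion $\nabla^{p+2}=\nabla\circ\nabla^{p+1}$ applied to the polynomial $f$.

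There is no real obstacle here; the content is entirely bookkeeping. The one thing to be careful about is the meaning of "rank": it should be read as $\mathbb{Q}$-dimension after base change, so that rank--nullity applies and $\operatorname{Rank}B_p(N_\bullet)$ is literally the rank of a matrix over $\mathbb{Q}$. Everything else is immediate once contractibility is granted (for the conceptual proof) or once the identities of Section~\ref{sec:combinatorics} are in hand (for the combinatorial one).
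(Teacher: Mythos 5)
Your proposal is correct, and it strengthens the paper's argument. The paper's proof verifies the Stirling identity from Theorem~\ref{thm:Stirling} in the constant-shape case and then disposes of the general shape with a one-line appeal to contractibility over $\mathbb{Z}$. Your conceptual argument spells out exactly what that one-liner is appealing to: you check that $d_0$ carries $N_{p+1}(X)$ into $N_p(X)$ via $d_id_0=d_0d_{i+1}$, identify its kernel as $Z_{p+1}(N_\bullet)$ and its image as $B_p(N_\bullet)$, invoke rank--nullity over $\mathbb{Q}$, and then use $H_\ast(X_\bullet)=0$ (Theorem~\ref{thm:contractible}) to replace $B_p$ by $Z_p$. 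This is the argument the paper implicitly relies on for the non-constant case, and making it explicit is an improvement. Your combinatorial cross-check is also slightly stronger than the paper's: by phrasing Proposition~\ref{prop:rank-IE} and Corollary~\ref{cor:rank-Zp} as values of backward differences $\nabla^{p}f$, $\nabla^{p+1}f$ of the \emph{general-shape} polynomial $f(y)=\prod_a(M_a(p)+y)$ and using $M_a(p+1)=M_a(p)+1$, you reduce the identity to the operator recursion $\nabla^{p+2}=\nabla\circ\nabla^{p+1}$, which establishes the rank identity combinatorially for arbitrary shapes --- whereas the paper's explicit verification via Stirling numbers only covers the constant-shape case. One small caution worth keeping in mind (and which your rank--nullity argument handles correctly thanks to the unaugmented contraction in \ref{app:filtration}): the contractibility used here includes $H_0(X_\bullet)=0$, which is what makes $B_0(N_\bullet)=Z_0(N_\bullet)$ and lets the identity hold already at $p=0$.
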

\begin{proof}
We verify the identity using the formulas from Theorem~\ref{thm:Stirling} in the constant shape case.
\begin{align*}
\operatorname{Rank} Z_{p+1} + \operatorname{Rank} Z_p &= (p+2)!\,\StirlingII{k}{p+2} + (p+1)!\,\StirlingII{k}{p+1}.
\end{align*}
This matches the formula for $\operatorname{Rank} N_{p+1}(X)$ derived in Theorem~\ref{thm:Stirling} (by replacing $p$ with $p+1$).

For general shape, the identity follows directly from the inclusion--exclusion formulas and the relation
$M_a(p+1)=M_a(p)+1$: writing $A_p(t):=\prod_{a=1}^k(M_a(p)+1-t)$, Pascal's identity
$\binom{p+2}{t}=\binom{p+1}{t}+\binom{p+1}{t-1}$ gives
\[
\rank Z_{p+1}=\sum_{t=0}^{p+2}(-1)^t\binom{p+2}{t}A_{p+1}(t)
=\sum_{t=0}^{p+1}(-1)^t\binom{p+1}{t}A_{p+1}(t)-\sum_{u=0}^{p+1}(-1)^u\binom{p+1}{u}A_p(u),
\]
i.e. $\rank Z_{p+1}=\rank N_{p+1}-\rank Z_p$.
\end{proof}

\section{The Horn Non-Degeneracy Lemma and Decomposition}
\label{sec:hornnondegeneracylemma}

We establish that the horn kernel and the degenerate submodule intersect trivially. This ensures the Horn decomposition is compatible with the standard decomposition of simplicial modules.

\begin{lemma}[Horn Non-Degeneracy]\label{lem:horn-nondeg}
For any simplicial module $X_\bullet$, any degree $p\ge1$, and any $j\in[p]$,
\[
R_{p,j}(X)\cap D_p(X)=\{0\}.
\]
\end{lemma}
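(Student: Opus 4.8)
The plan is to prove this for a general simplicial module $X_\bullet$ by exploiting the explicit Eilenberg--Mac Lane projection $\pi_p$ onto $N_p(X)$ recalled in Remark~\ref{rem:EM_idempotents}, together with the characterization $R_{p,j}(X)=\bigcap_{i\ne j}\ker d_i$. The key observation is that $D_p(X)=\ker\pi_p$, so it suffices to show that $\pi_p$ is \emph{injective} on $R_{p,j}(X)$: if $T\in R_{p,j}(X)\cap D_p(X)$, then $\pi_p(T)=0$, and injectivity forces $T=0$. Concretely I will show that on $R_{p,j}(X)$ the composite $\pi_p$ acts as the identity (or at worst as an injection), by tracking how each factor $(\mathrm{id}-s_{r-1}d_r)$ of $\pi_p=(\mathrm{id}-s_{p-1}d_p)\cdots(\mathrm{id}-s_0 d_1)$ behaves once the relevant faces vanish.

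The cleanest route splits on whether $j=0$ or $j\ge 1$. If $j\ge 1$, then $R_{p,j}(X)\subseteq\bigcap_{i\ne j,\,i\ge 1}\ker d_i$, so $T\in R_{p,j}(X)$ is killed by every $d_i$ with $1\le i\le p$ except possibly $i=j$; but $R_{p,j}(X)$ also requires $d_0 T=0$ when $0\ne j$, hence in fact $d_i T=0$ for all $i\ne j$ including $i=0$. Then in the product defining $\pi_p$, every factor $(\mathrm{id}-s_{r-1}d_r)$ with $r\ne j$ acts as the identity on $T$ and on all partial composites (since applying $s$'s and the surviving operations keeps one inside the span where $d_r$ still vanishes — this needs the simplicial identities to propagate, which is the one genuinely fiddly point). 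The single factor $(\mathrm{id}-s_{j-1}d_j)$ then sends $T$ to $T-s_{j-1}d_j T$; since $s_{j-1}$ is injective (split by $d_{j-1}$ or $d_j$), one checks $\pi_p(T)=0$ iff $T=s_{j-1}d_j T\in\operatorname{im}(s_{j-1})\subseteq D_p$ with $d_i T=0$ for $i\ne j$, and a short direct computation using $d_i s_{j-1}$ identities shows this forces $T=0$. The case $j=0$ is symmetric: $R_{p,0}(X)=N_p(X)$ by definition, on which $\pi_p$ is the identity, so $R_{p,0}(X)\cap D_p(X)=N_p(X)\cap D_p(X)=\{0\}$ is immediate from Theorem~\ref{thm:EZ}.

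Alternatively — and this may be the version I actually write, since it avoids the propagation bookkeeping — I can argue purely from the Eilenberg--Zilber decomposition of Theorem~\ref{thm:EZ}: write $T\in R_{p,j}(X)\cap D_p(X)$ in its unique form $T=\sum s_{i_{p-q}}\cdots s_{i_1} y_q$ with $y_q\in N_q(X)$; membership in $D_p(X)$ means the $q=p$ component vanishes; then apply the faces $d_i$ for $i\ne j$, use the simplicial identities to see how degeneracies collapse, and conclude by downward induction on $q$ that every $y_q=0$. The main obstacle in either approach is the same: carefully managing the interaction of $d_i$ with the iterated degeneracies $s_{i_{p-q}}\cdots s_{i_1}$ via the simplicial identities $d_i s_r\in\{s_{r-1}d_i,\ \mathrm{id},\ s_r d_{i-1}\}$, and making sure the "only $j$ survives" bookkeeping is consistent across all $q$ simultaneously. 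I expect the $j$-dependence to wash out exactly as Corollary~\ref{cor:missing_indices_existence} and the surrounding discussion suggest, so the final statement will hold uniformly in $j$.
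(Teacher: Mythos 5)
Your first approach (via the Eilenberg--Mac Lane idempotent $\pi_p$) is correct and is genuinely different from the paper's argument, which proceeds by filtering $D_p(X)$ by the maximum degeneracy index $H_r=\sum_{i\le r}\operatorname{im}(s_i)$, choosing $x\in R_{p,j}\cap D_p$ and a minimal $m$ with $x\in H_m$, and deriving a contradiction via a case split on $j\ne m+1$ versus $j=m+1$ (the latter requiring a nested induction over the faces $d_0,\dots,d_m$). Your Approach~1 instead reduces everything to showing that on $R_{p,j}$ the projection collapses to $\pi_p(T)=T-s_{j-1}d_jT$, and for $T\in\ker\pi_p$ the single identity $d_{j-1}s_{j-1}=\mathrm{id}$ together with $d_{j-1}T=0$ immediately kills $T$. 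This is noticeably shorter than the paper's proof, at the cost of invoking the explicit formula for $\pi_p$ from Remark~\ref{rem:EM_idempotents} as a black box, whereas the paper works directly from the simplicial identities and the raw definition of $D_p$.

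The ``genuinely fiddly point'' you flag is real but resolves cleanly, and you should carry it out: with $T_r$ the $r$-th partial composite, one checks $T_0=\dots=T_{j-1}=T$ (since $d_rT=0$ for $r<j$), $T_j=T-s_{j-1}d_jT=:W$, and then $T_{j+1}=\dots=T_p=W$ because $d_rW=-s_{j-1}d_{r-1}d_jT=-s_{j-1}d_jd_rT=0$ for $r>j$, using $d_rs_{j-1}=s_{j-1}d_{r-1}$ and $d_{r-1}d_j=d_jd_r$ (with $d_jd_j=d_jd_{j+1}$ handling $r=j+1$). One small imprecision in your write-up: the factors with $r>j$ do not act as the identity ``on $T$'' but on $W$; the reason they fix $W$ requires the computation above and is not a formal consequence of $d_rT=0$. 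Your $j=0$ case is, as you note, immediate from Theorem~\ref{thm:EZ}. Your Approach~2 (inducting over the Eilenberg--Zilber decomposition) is closer in spirit to the paper's filtration argument and would face essentially the same bookkeeping as the paper's Case~2; since you never actually run that induction, I would commit to Approach~1, which you have effectively reduced to a two-line computation once the stabilization of partial composites is established.
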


\begin{proof}
We filter the degenerate submodule $D_p(X)$ by the maximum index of the degeneracy operators. For $0\le r\le p-1$, set
\[
H_r:=\sum_{i=0}^{r}\operatorname{im}(s_i)\subseteq X_p,
\]
and for $0\le r\le p-2$, set
\[
H'_r:=\sum_{i=0}^{r}\operatorname{im}(s_i)\subseteq X_{p-1},
\]
and set $H_{-1}=H'_{-1}=\{0\}$. Note that $D_p(X)=H_{p-1}$.

Let $x\in R_{p,j}(X)\cap D_p(X)$ and assume $x\neq 0$. Choose $m\ge0$ minimal such that $x\in H_m$. Then we can write
\[
x=s_m(z)+y,\qquad z\in X_{p-1},\ \ y\in H_{m-1}.
\]
If we can show $z\in H'_{m-1}$, then $z=\sum_{i=0}^{m-1}s_i(u_i)$. Using the identity $s_m s_i=s_i s_{m-1}$ for $i<m$, we obtain
\[
s_m(z)=\sum_{i=0}^{m-1}s_m s_i(u_i)=\sum_{i=0}^{m-1}s_i s_{m-1}(u_i)\in H_{m-1}.
\]
This implies $x\in H_{m-1}$, contradicting the minimality of $m$. Thus, it suffices to prove $z\in H'_{m-1}$.

We split the proof into cases based on the index $j$.

\emph{Case 1: $j\neq m+1$.}
Since $m+1\le p$ and $m+1\neq j$, we have $d_{m+1}(x)=0$.
\[
0 = d_{m+1}(x) = d_{m+1}s_m(z) + d_{m+1}(y) = z + d_{m+1}(y),
\]
where we used $d_{m+1}s_m = \operatorname{id}$. Since $y \in H_{m-1}$, we write $y=\sum_{i=0}^{m-1} s_i(w_i)$. For $i<m$, the identity $d_{m+1}s_i = s_i d_m$ holds, so
\[
d_{m+1}(y) = \sum_{i=0}^{m-1} s_i d_m(w_i) \in H'_{m-1}.
\]
Hence $z = -d_{m+1}(y) \in H'_{m-1}$, as required.

\emph{Case 2: $j = m+1$.}
In this case, the face maps $d_0, \dots, d_m$ all annihilate $x$ (since $j=m+1$). Since $x \in H_m$, we may write $x$ as a sum $x = \sum_{i=0}^m s_i(w_i)$ for some coefficients $w_i \in X_{p-1}$. (Note that $z$ from the setup corresponds to $w_m$).

We prove by induction on $k$ (for $0 \le k \le m$) that $w_k \in H'_{m-1}$.

\emph{Base step ($k=0$):} Consider $d_0(x)=0$.
\[
0 = \sum_{i=0}^m d_0 s_i(w_i) = d_0 s_0(w_0) + \sum_{i=1}^m d_0 s_i(w_i).
\]
Using $d_0 s_0 = \operatorname{id}$ and $d_0 s_i = s_{i-1} d_0$ for $i \ge 1$:
\[
0 = w_0 + \sum_{i=1}^m s_{i-1} d_0(w_i).
\]
The sum consists of terms in $\operatorname{im}(s)$, so $w_0 \in H'_{m-1}$.

\emph{Inductive step:} Assume $w_0, \dots, w_{k-1} \in H'_{m-1}$ for some $1 \le k \le m$. Consider $d_k(x)=0$.
\[
0 = \sum_{i=0}^m d_k s_i(w_i).
\]
We split the sum into four parts based on the index $i$:
\begin{enumerate}
    \item $i < k-1$: $d_k s_i(w_i) = s_i d_{k-1}(w_i) \in H'_{m-1}$.
    \item $i = k-1$: $d_k s_{k-1}(w_{k-1}) = w_{k-1}$ (using $d_k s_{k-1} = \operatorname{id}$). By the inductive hypothesis, $w_{k-1} \in H'_{m-1}$.
    \item $i = k$: $d_k s_k(w_k) = w_k$ (using $d_k s_k = \operatorname{id}$).
    \item $i > k$: $d_k s_i(w_i) = s_{i-1} d_k(w_i) \in H'_{m-1}$.
\end{enumerate}
Substituting these into the equation yields
\[
0 = (\text{terms in } H'_{m-1}) + w_{k-1} + w_k + (\text{terms in } H'_{m-1}).
\]
Solving for $w_k$, we see that $w_k$ is a sum of elements in $H'_{m-1}$. Thus $w_k \in H'_{m-1}$.

By induction, $w_m \in H'_{m-1}$. Since $z=w_m$, we have $z \in H'_{m-1}$.

In all cases, we conclude $z \in H'_{m-1}$, which contradicts the minimality of $m$ unless $x=0$.
\end{proof}

\begin{theorem}[Horn decomposition]\label{thm:decomp_p}
For any simplicial module $X_\bullet$, any $p\ge 1$, and any $j\in[p]$, the Moore filler map $\mu_j$ (Definition~\ref{def:moore_filler}) is a section of the horn map $\Phi_j$, and
\[
X_p = R_{p,j}(X) \oplus D_p(X).
\]
In particular, the image of the filler map is exactly the degenerate submodule: $\operatorname{im}(\mu_j) = D_p(X)$.
\end{theorem}

\begin{proof}
By construction (Definition~\ref{def:moore_filler}) and Proposition~\ref{prop:3term},
$\mu_j$ is a section of $\Phi_j$, i.e.\ $\Phi_j\circ\mu_j=\operatorname{id}_{\operatorname{Horns}(p,j)}$.
Thus the short exact sequence in Proposition~\ref{prop:3term} splits, and we obtain
\[
X_p = R_{p,j}(X) \oplus \operatorname{im}(\mu_j).
\]
Definition~\ref{def:moore_filler} ensures $\operatorname{im}(\mu_j)\subseteq D_p(X)$.
Lemma~\ref{lem:horn-nondeg} ensures that the direct summands intersect trivially: $R_{p,j}(X) \cap D_p(X) = \{0\}$.
Now let $d \in D_p(X)$. Decompose it according to the splitting as $d = r + m$ where $r \in R_{p,j}(X)$ and $m \in \operatorname{im}(\mu_j)$.
Then $r = d - m$. Since $d \in D_p(X)$ and $m \in \operatorname{im}(\mu_j) \subseteq D_p(X)$, we have $r \in D_p(X)$.
Thus $r \in R_{p,j}(X) \cap D_p(X) = \{0\}$, so $r=0$.
Consequently $d = m$, proving that $D_p(X) \subseteq \operatorname{im}(\mu_j)$.
Therefore $\operatorname{im}(\mu_j) = D_p(X)$.
\end{proof}

\begin{remark}[Normalization Conventions]
Theorem~\ref{thm:decomp_p} generalizes the classical Normalization Theorem (Theorem~\ref{thm:EZ}). Standard conventions for the Moore complex (or normalized chain complex) vary:
\begin{enumerate}
    \item $N_p^{(0)}(X) = \bigcap_{i=0}^{p-1} \ker d_i = R_{p,p}(X)$.
    \item $N_p^{(1)}(X) = \bigcap_{i=1}^{p} \ker d_i = R_{p,0}(X)$.
\end{enumerate}
In this paper we adopt the second convention $N_p(X)=N_p^{(1)}(X)=R_{p,0}(X)$. Our horn decomposition encompasses both as the cases $j=p$ and $j=0$ respectively.
\end{remark}

\section{Homology Dichotomy and Classification}
\label{sec:dichotomy}

\subsection{The Horn Complex}

In this subsection we assume $n\ge 2$; for $n<2$ the horn complex in low degrees degenerates and the corresponding homology statements are easily checked directly.
Fix $j\in[n]$. Define the linear maps (identifying $\bigoplus_{i\ne j}X_{n-1}\cong \prod_{i\ne j}X_{n-1}$):
\[
\Phi_j:X_n\longrightarrow \bigoplus_{i\ne j}X_{n-1},\qquad
\Phi_j(T)=(d_iT)_{i\ne j},
\]
\[
\Psi:\bigoplus_{i\ne j}X_{n-1}\longrightarrow \bigoplus_{\substack{i<m\\ i,m\ne j}}X_{n-2},\qquad
\Psi((x_i))=(d_i x_m - d_{m-1} x_i)_{i<m}.
\]

\begin{proposition}\label{prop:horn-complex}
The sequence
\[
C^{\mathrm{horn}}_j:\qquad
0\longrightarrow X_n \xrightarrow{\ \partial_2=\Phi_j\ } \bigoplus_{i\ne j}X_{n-1}\xrightarrow{\ \partial_1=\Psi\ } \bigoplus_{i<m,\,i,m\ne j}X_{n-2}\longrightarrow 0
\]
is a chain complex (indexed homologically with $X_n$ in degree $2$). Moreover,
\[
H_2\bigl(C^{\mathrm{horn}}_j\bigr)\ \cong\ R_{n,j},
\qquad
H_1\bigl(C^{\mathrm{horn}}_j\bigr)=0.
\]
\end{proposition}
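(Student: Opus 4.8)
The plan is to verify first that $C^{\mathrm{horn}}_j$ is a chain complex, then compute its two nonzero homology groups.

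First I would check $\Psi\circ\Phi_j=0$. This is a direct computation: for $T\in X_n$ and indices $i<m$ with $i,m\neq j$, the $(i,m)$-component of $\Psi(\Phi_j(T))$ is $d_i(d_m T)-d_{m-1}(d_i T)$, which vanishes by the simplicial identity $d_i d_m = d_{m-1}d_i$ (valid since $i<m$). So $\partial_1\partial_2=0$ and the sequence is a complex.

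Next, $H_2(C^{\mathrm{horn}}_j)=\ker\Phi_j = \bigcap_{i\neq j}\ker d_i = R_{n,j}$ by definition; there is nothing more to prove here since $X_n$ sits in the top degree.

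The substantive step is $H_1(C^{\mathrm{horn}}_j)=0$, i.e. $\ker\Psi = \operatorname{im}\Phi_j$. The inclusion $\operatorname{im}\Phi_j\subseteq\ker\Psi$ is the complex condition just established. For the reverse inclusion, an element $(x_i)_{i\neq j}\in\ker\Psi$ is precisely a compatible $(n,j)$-horn in the sense of Definition \ref{def:moore_filler}: the condition $\Psi((x_i))=0$ unpacks to $d_i x_m = d_{m-1} x_i$ for all $i<m$ in $F_j$, which is exactly the horn compatibility condition defining $\operatorname{Horns}(n,j)$. By Proposition \ref{prop:3term} (equivalently, by applying the Moore filler map $\mu_j$ of Definition \ref{def:moore_filler}), every compatible horn lifts: there is $T\in X_n$ with $d_i T = x_i$ for all $i\neq j$, i.e. $\Phi_j(T)=(x_i)$. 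Hence $(x_i)\in\operatorname{im}\Phi_j$, giving $\ker\Psi\subseteq\operatorname{im}\Phi_j$ and therefore $H_1=0$.

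I expect no real obstacle: the only mildly delicate point is making sure the indexing conventions (which pairs $i<m$ appear, and the shift $d_{m-1}$ rather than $d_m$ in the second term) match the horn compatibility condition exactly, so that the surjectivity assertion of Proposition \ref{prop:3term} applies verbatim. Since $\operatorname{Horns}(n,j)$ in Definition \ref{def:moore_filler} is defined by exactly these relations, the identification is immediate and the result follows.
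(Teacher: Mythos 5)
Your proof is correct and follows essentially the same route as the paper: the simplicial identity $d_i d_m = d_{m-1} d_i$ gives $\partial_1\partial_2 = 0$, $H_2 = \ker\Phi_j = R_{n,j}$ by definition since $X_n$ sits in top degree, and $H_1 = 0$ follows because $\ker\Psi = \operatorname{Horns}(n,j)$ and Proposition~\ref{prop:3term} supplies fillers (surjectivity of $\Phi_j$ onto $\operatorname{Horns}(n,j)$). One small slip: the compatibility relations defining $\operatorname{Horns}(n,j)$ live in the (unlabeled) ``Horn space'' definition rather than in Definition~\ref{def:moore_filler}, which defines the Moore filler map $\mu_j$; this is only a citation issue and does not affect the argument.
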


\begin{proof}
$\partial_1\circ\partial_2=0$ by the simplicial identities $d_i d_m=d_{m-1}d_i$ for $i<m$.
Since the complex starts in degree $2$, $H_2=\ker\partial_2=R_{n,j}$.
The kernel of $\partial_1$ is the space of compatible horns
$\operatorname{Horns}(n,j)$; by the existence of fillers (Proposition~\ref{prop:3term}),
$\operatorname{im}\partial_2=\operatorname{Horns}(n,j)$.
Thus $H_1 = \ker\partial_1 / \operatorname{im}\partial_2 = 0$.
\end{proof}

\begin{theorem}[Short exact sequence in the horn kernel]\label{thm:short-exact}
The face map $d_j:R_{n,j}\to X_{n-1}$ induces a natural short exact sequence
\[
0\longrightarrow Z_n(N_\bullet)\longrightarrow R_{n,j} \xrightarrow{\,d_j\,} d_j(R_{n,j})\longrightarrow 0.
\]
\end{theorem}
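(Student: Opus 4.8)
The plan is to observe that this is a purely formal consequence of the definitions of $R_{n,j}$ and $Z_n$, requiring no filler-theoretic input. Recall $R_{n,j}=\bigcap_{i\ne j}\ker d_i\subseteq X_n$, while the normalized cycle module is $Z_n=Z_n(N_\bullet)=\bigcap_{i=0}^n\ker d_i$. The asserted sequence carries three pieces of content: that the left-hand map $Z_n\hookrightarrow R_{n,j}$ is a well-defined inclusion, that $Z_n$ is exactly the kernel of $d_j$ restricted to $R_{n,j}$, and that $d_j(R_{n,j})$ is the genuine image; the last is tautological once the right-hand term is taken to be $\operatorname{im}(d_j|_{R_{n,j}})$.

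First I would check $Z_n\subseteq R_{n,j}$: since $Z_n$ is the intersection of $\ker d_i$ over all $0\le i\le n$, it lies in the smaller intersection $\bigcap_{i\ne j}\ker d_i=R_{n,j}$, which gives the left exactness. For exactness in the middle I would compute directly
\[
\ker\bigl(d_j\colon R_{n,j}\to X_{n-1}\bigr)
= R_{n,j}\cap\ker d_j
= \Bigl(\bigcap_{i\ne j}\ker d_i\Bigr)\cap\ker d_j
= \bigcap_{i=0}^n\ker d_i
= Z_n .
\]
Surjectivity of $d_j\colon R_{n,j}\to d_j(R_{n,j})$ is immediate from the definition of the target. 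This establishes exactness of the three-term sequence.

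For naturality I would note that any morphism $\varphi\colon X_\bullet\to Y_\bullet$ of simplicial $A$-modules has components $\varphi_p$ commuting with every face map $d_i$; hence $\varphi_n$ carries $R_{n,j}(X)$ into $R_{n,j}(Y)$ and $Z_n(X)$ into $Z_n(Y)$, and $\varphi_{n-1}$ carries $d_j(R_{n,j}(X))$ into $d_j(R_{n,j}(Y))$. The resulting ladder of short exact sequences commutes because each vertical arrow is a restriction of $\varphi_n$ or $\varphi_{n-1}$ and $\varphi$ commutes with $d_j$.

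There is no substantive obstacle here; the only point worth isolating is the middle identity $R_{n,j}\cap\ker d_j=Z_n$, which holds independently of the choice of missing face $j$. This $j$-independence is what makes the statement useful: it lets one transport rank and structural information freely among $R_{n,j}$, $Z_n$, and $d_j(R_{n,j})$, and in particular (using the rank formulas of Section~\ref{sec:combinatorics} and the contractibility of $X_\bullet(\vec s;A)$) pins down $\rank d_j(R_{n,j})=\rank R_{n,j}-\rank Z_n$.
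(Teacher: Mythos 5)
Your proof is correct and matches the paper's approach: both hinge on the single computation $R_{n,j}\cap\ker d_j = \bigcap_{i=0}^n\ker d_i = Z_n$, with the paper's version simply being terser (it omits the routine verifications of the inclusion, surjectivity onto the image, and naturality that you spell out).
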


\begin{proof}
The kernel of $d_j|_{R_{n,j}}$ is
$R_{n,j}\cap\ker d_j = \bigcap_{i\ne j}\ker d_i \cap \ker d_j = Z_n(N_\bullet)$.
\end{proof}

\subsection{The Classification Dichotomy}

We use the ranks established in Section~\ref{sec:combinatorics}.

\begin{corollary}[Filler dichotomy]\label{cor:dichotomy}
The ranks of $R_{n,j}$ and $Z_n(N_\bullet)$ (Proposition~\ref{prop:rank-IE}, Corollary~\ref{cor:rank-Zp}) determine the following classification based on the tensor order $k$:
\begin{enumerate}
\item If $k<n$, then $R_{n,j}=0$ (fillers are unique in dimension $n$).
\item If $k=n$, then $R_{n,j}\neq 0$ and $Z_n(N_\bullet)=0$. Hence $R_{n,j}\cong d_j(R_{n,j})$.
\item If $k\ge n+1$, then $Z_n(N_\bullet)\neq 0$ and $Z_n(N_\bullet)$ injects canonically into $R_{n,j}$.
\end{enumerate}
\end{corollary}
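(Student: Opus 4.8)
The plan is to derive all three items directly from the rank formulas and the vanishing criterion $R_{p,j}\neq 0\iff k\ge p$ already proved in Corollary~\ref{cor:missing_indices_existence}, together with the analogous statement for $Z_n$. First I would record that $Z_n=Z_n(N_\bullet)=\bigcap_{i=0}^n\ker d_i$ is spanned by those $E_m$ with $\operatorname{im}(m)\supseteq[n]$ (Corollary~\ref{cor:rank-Zp}), so $Z_n\neq 0$ if and only if there exists $m\in I_n$ with $\operatorname{im}(m)=[n]$, which needs $k\ge n+1$ (the surjection $[k]\twoheadrightarrow[n]$ argument, exactly as in the proof of Proposition~\ref{prop:Bn}); for $k\le n$ no such $m$ exists, so $Z_n=0$.

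Item (1) is then immediate: if $k<n$ then $k<n\le p$ fails the bound $k\ge n$, so by Corollary~\ref{cor:missing_indices_existence} (with $p=n$) we get $R_{n,j}=0$, hence every $(n,j)$-horn has a unique filler. For item (2), when $k=n$ we have $k\ge n$, so $R_{n,j}\neq 0$ by Corollary~\ref{cor:missing_indices_existence}; and $k=n<n+1$ forces $Z_n=0$ by the paragraph above. Feeding $Z_n=0$ into the short exact sequence $0\to Z_n\to R_{n,j}\xrightarrow{d_j} d_j(R_{n,j})\to 0$ of Theorem~\ref{thm:short-exact} yields $R_{n,j}\cong d_j(R_{n,j})$. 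For item (3), when $k\ge n+1$ we have $k\ge n+1>n$, so again $Z_n\neq 0$ by the surjection argument, and the same short exact sequence of Theorem~\ref{thm:short-exact} exhibits $Z_n$ as the kernel of $d_j|_{R_{n,j}}$, i.e.\ a canonical submodule of $R_{n,j}$ (note $R_{n,j}\neq 0$ as well since $k\ge n$, so the inclusion is of a genuinely nonzero module).

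The only real content beyond bookkeeping is the equivalence $Z_n\neq 0\iff k\ge n+1$, and this is not an obstacle — it is the same combinatorial fact used for $B_p(N_\bullet)$ in Proposition~\ref{prop:Bn}, namely that a tuple $(m_1,\dots,m_k)\in[n]^k\subseteq I_n$ with $\operatorname{im}(m)=[n]$ exists precisely when $k\ge n+1$, since such a tuple is exactly a surjection from a $k$-element index set onto the $(n+1)$-element set $[n]$. I would also remark that the rank formulas of Proposition~\ref{prop:rank-IE} and Corollary~\ref{cor:rank-Zp} give the same dichotomy quantitatively: $\rank Z_n=\sum_{t=0}^{n+1}(-1)^t\binom{n+1}{t}\prod_a(M_a(n)+1-t)$ vanishes for $k\le n$ because it is an $(n+1)$-st finite difference of a product of $k\le n$ linear factors (a polynomial of degree $k\le n$ in the shift variable), and is positive for $k\ge n+1$. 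Thus the three cases are mutually exclusive and exhaustive, and each conclusion follows formally from the two vanishing criteria plus Theorem~\ref{thm:short-exact}.
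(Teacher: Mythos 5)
Your proof is correct and follows the route the paper itself intends: the paper presents Corollary~\ref{cor:dichotomy} without an explicit proof, expecting the reader to combine the vanishing criterion of Corollary~\ref{cor:missing_indices_existence} (giving $R_{n,j}\neq 0\iff k\ge n$), the support characterization of $Z_n$ as the span of $E_m$ with $\operatorname{im}(m)\supseteq[n]$ (giving $Z_n\neq 0\iff k\ge n+1$, as in the proof of Proposition~\ref{prop:Bn} and visible quantitatively in Corollary~\ref{cor:rank-constant-shape-revised}), and the short exact sequence of Theorem~\ref{thm:short-exact}. Your reconstruction supplies exactly these three ingredients and assembles them correctly; the only nitpick is that $Z_n\neq 0$ requires $\operatorname{im}(m)\supseteq[n]$ rather than $\operatorname{im}(m)=[n]$, but since any $m$ satisfying the containment forces $k\ge n+1$ and the equality case realizes the threshold, your conclusion is unaffected.
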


\begin{corollary}[Constant shape ranks]\label{cor:rank-constant-shape-revised}
For the constant shape $\vec{s} = (n+1, \ldots, n+1)$, the ranks are given by Theorem~\ref{thm:Stirling}:
\[
\rank Z_n(N_\bullet) = (n+1)! \StirlingII{k}{n+1},
\]
\[
\rank R_{n,j} = n! \StirlingII{k}{n} + (n+1)! \StirlingII{k}{n+1}.
\]
In particular:
\begin{itemize}
\item $Z_n(N_\bullet) \neq 0$ if and only if $k \geq n+1$.
\item $R_{n,j} \neq 0$ if and only if $k \geq n$.
\end{itemize}
\end{corollary}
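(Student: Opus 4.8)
The plan is to read off Corollary~\ref{cor:rank-constant-shape-revised} as the degree-$n$ specialization of Theorem~\ref{thm:Stirling}. First I would check the combinatorial normalization: for the constant shape $\vec s=(n+1,\dots,n+1)$ one has $n_a-1=n$ for every axis $a$, hence by the definition $M_a(p)=n_a-1-n+p$ we get $M_a(p)=p$ for all $a$ and all $p$; in particular $M_a(n)=n$, so $I_n=[n]^k$ and the hypothesis $I_p=[p]^k$ of Theorem~\ref{thm:Stirling} is satisfied at $p=n$. Substituting $p=n$ into that theorem gives
\[
\operatorname{rank} Z_n(N_\bullet)=(n+1)!\,\StirlingII{k}{n+1},
\qquad
\operatorname{rank} N_n(X)=n!\,\StirlingII{k}{n}+(n+1)!\,\StirlingII{k}{n+1}.
\]
Since $N_n(X)=R_{n,0}$ and, by Proposition~\ref{prop:rank-IE}, the rank of $R_{n,j}$ does not depend on $j$, the second identity computes $\operatorname{rank} R_{n,j}$ for every $j\in[n]$. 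This establishes both displayed rank formulas.

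For the ``in particular'' dichotomy I would use the elementary fact that $\StirlingII{k}{m}$ is a positive integer for $1\le m\le k$ and vanishes for $m>k$ (equivalently, $m!\,\StirlingII{k}{m}$ is the number of surjections $[k]\twoheadrightarrow[m]$). Then $\operatorname{rank} Z_n=(n+1)!\,\StirlingII{k}{n+1}\ne 0$ exactly when $\StirlingII{k}{n+1}\ne 0$, i.e.\ $k\ge n+1$. For $R_{n,j}$, both summands $n!\,\StirlingII{k}{n}$ and $(n+1)!\,\StirlingII{k}{n+1}$ are nonnegative, so the sum is nonzero iff at least one of them is, i.e.\ iff $k\ge n$ (the weaker of the two conditions $k\ge n$, $k\ge n+1$). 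As an internal consistency check, $R_{n,j}\ne 0\iff k\ge n$ is also the $p=n$ case of Corollary~\ref{cor:missing_indices_existence}, and $Z_n\ne 0\iff k\ge n+1$ follows from the surjection count underlying Corollary~\ref{cor:rank-Zp}.

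I do not expect a genuine obstacle here: the arithmetic is entirely carried by Theorem~\ref{thm:Stirling} together with the $j$-independence of $\operatorname{rank} R_{p,j}$ from Proposition~\ref{prop:rank-IE}, and the only extra ingredient is the standard positivity/vanishing pattern of the Stirling numbers. If a self-contained derivation were preferred, one could re-run the finite-difference computation of Theorem~\ref{thm:Stirling} at $p=n$ --- expanding $x^k=\sum_m\StirlingII{k}{m}x^{\underline m}$ and evaluating $\Delta^{n+1}(x^k)\big|_{x=0}$ and $\Delta^{n}(x^k)\big|_{x=1}$ --- but this only repeats the earlier argument and adds nothing new.
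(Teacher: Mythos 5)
Your proposal is correct and takes essentially the same route the paper intends: the corollary is a direct specialization of Theorem~\ref{thm:Stirling} at $p=n$, combined with the identification $N_n(X)=R_{n,0}$, the $j$-independence of $\operatorname{rank} R_{n,j}$ visible from Proposition~\ref{prop:rank-IE}, and the standard positivity/vanishing pattern of the Stirling numbers (with the cross-check against Corollary~\ref{cor:missing_indices_existence} being exactly what the paper uses elsewhere). The paper gives no separate proof for this corollary, treating it as immediate, and your write-up fills in precisely those details.
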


\begin{remark}[Moore filler vs.\ $T$]
Write $\Lambda^n_j(T):=\Phi_j(T)\in \operatorname{Horns}(n,j)$ for the $(n,j)$-horn of $T$.
Moore’s filler $T'$ satisfies $T'=T$ if and only if $T \in D_n$: the “only if” direction holds because $T'$ is always degenerate (Definition~\ref{def:moore_filler}), and the “if” direction follows from Lemma~\ref{lem:horn-nondeg}, since in that case $T-T'$ lies in $R_{n,j}\cap D_n=\{0\}$. If $k<n$, then $R_{n,j}=0$, so $T'=T$ always holds (and $D_n=X_n$). If $k\ge n$ and $T \notin D_n$, then $T-T'\in R_{n,j}\setminus\{0\}$ and is supported on the missing indices (Theorem~\ref{thm:support_characterization}).
\end{remark}

\subsection{Interpretation as Algebraic \texorpdfstring{$n$}{n}-Hypergroupoids}

We adapt the definition of Duskin (1979) and Glenn (1982).

\begin{definition}[Algebraic $n$-hypergroupoid]
A simplicial module $X_\bullet$ is an \textbf{algebraic $n$-hypergroupoid} if horn fillers are unique in dimensions $p>n$. It is \textbf{strict} if it is not an $(n-1)$-hypergroupoid (i.e., fillers in dimension $n$ are not unique).
\end{definition}

\begin{proposition}
A simplicial module $X_\bullet$ is an algebraic $n$-hypergroupoid if and only if $R_{p,j}(X)=0$ for all $p>n$ and all $j$.
\end{proposition}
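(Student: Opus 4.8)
The plan is to recognize that this proposition is essentially a reformulation of the definition, made precise through the torsor structure of filler sets recorded immediately after the definition of the horn kernel $R_{p,j}(X)$. First I would fix a dimension $p>n$ and an index $j\in\{0,\dots,p\}$ and recall two facts: every $(p,j)$-horn admits at least one filler by Proposition~\ref{prop:3term} (the Kan condition in the abelian setting), and the difference of any two fillers of a fixed horn lies in $R_{p,j}(X)$. Together these say that the set of fillers of any $(p,j)$-horn is a nonempty torsor modeled on $R_{p,j}(X)$, hence a singleton precisely when $R_{p,j}(X)=0$.

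For the forward implication I would assume $X_\bullet$ is an algebraic $n$-hypergroupoid, so fillers are unique in all dimensions $p>n$, and test this against the zero horn in a fixed such dimension: it is filled by $0$, while every $x\in R_{p,j}(X)$ is also a filler of it (since $d_i x=0$ for all $i\ne j$). Uniqueness forces $x=0$, so $R_{p,j}(X)=0$; since $p>n$ and $j$ were arbitrary, this holds for all $p>n$ and all $j$. For the converse I would assume $R_{p,j}(X)=0$ for all $p>n$ and all $j$, fix such $p$ and $j$ and an arbitrary $(p,j)$-horn $H$, and invoke the torsor description to conclude that the filler set of $H$ has exactly one element; thus fillers are unique in every dimension $p>n$, which is the defining condition.

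I do not expect a genuine obstacle here: the statement is a bookkeeping consequence of the definitions plus the already-established existence of fillers. The one point that requires care is that ``unique filler'' means ``exactly one filler,'' so the argument must explicitly combine existence (Proposition~\ref{prop:3term}) with the vanishing of $R_{p,j}(X)$ that encodes uniqueness; conflating these two ingredients is the only way the proof could go wrong.
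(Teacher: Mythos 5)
Your proposal is correct and matches the paper's own proof in substance: both argue that fillers always exist (Proposition~\ref{prop:3term}) and that the set of fillers of a fixed horn is a nonempty torsor over $R_{p,j}(X)$, hence a singleton exactly when $R_{p,j}(X)=0$. The paper states this in one line; your version merely spells out the zero-horn test for the forward direction, which is the same observation made explicit.
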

\begin{proof}
Simplicial modules are Kan complexes (fillers always exist, e.g., Proposition~\ref{prop:3term}). As observed in Section~\ref{sec:horns}, the set of fillers of a given horn is an affine torsor modeled on $R_{p,j}$, hence a singleton if and only if $R_{p,j}=0$.
\end{proof}

Applying the criterion of Corollary~\ref{cor:missing_indices_existence} to the DSTM
$X_\bullet(\vec s;A)$ with simplicial dimension $n$ yields the hypergroupoid
classification of Theorem~\ref{thm:classification}: $X_\bullet(\vec s;A)$ is a strict
algebraic $n$-hypergroupoid if and only if the tensor order $k$ equals $n$.

\subsection{The Subcomplex of Normalized Cycles}

We analyze the subcomplex formed by the normalized cycles $Z_r(N_\bullet)$.
In the DSTM, $Z_r(N_\bullet)$ is spanned by basis elements $E_m$ such that $\operatorname{im}(m) \supseteq [r]$.

\begin{lemma}\label{lem:normalized-cycle-subcomplex}
The family $(Z_r(N_\bullet))_{r\ge 0}$ forms a chain subcomplex of $(X_\bullet, \partial)$. Furthermore, the differential on this subcomplex is zero:
\[
\partial_r\bigl(Z_r(N_\bullet)\bigr)=0,\qquad
H_r\!\bigl(Z_\bullet(N_\bullet)\bigr)\cong Z_r(N_\bullet).
\]
There is a short exact sequence of chain complexes
\begin{equation}\label{eq:short-exact}
0\longrightarrow Z_\bullet(N_\bullet) \longrightarrow X_\bullet \longrightarrow X_\bullet/Z_\bullet(N_\bullet) \longrightarrow 0.
\end{equation}
\end{lemma}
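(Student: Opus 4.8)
The plan is to reduce the whole statement to one elementary observation: the normalized cycle module is
$Z_r(N_\bullet)=\ker\bigl(d_0:N_r\to N_{r-1}\bigr)=\bigcap_{i=0}^{r}\ker d_i^r$ (this identification is recorded in the text immediately preceding Corollary~\ref{cor:rank-Zp}). Once this is in hand, every face operator $d_i^r$ with $0\le i\le r$ annihilates $Z_r(N_\bullet)$, hence so does the alternating sum $\partial_r=\sum_{i=0}^{r}(-1)^i d_i^r$, and all four assertions of the lemma follow formally.

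Concretely, for $x\in Z_r(N_\bullet)$ we have $d_i^r(x)=0$ for every $i\in\{0,\dots,r\}$, so $\partial_r(x)=0$; thus $\partial_r|_{Z_r(N_\bullet)}=0$. In the DSTM this can also be seen on the basis: $Z_r(N_\bullet)$ is spanned by the $E_m$ with $\operatorname{im}(m)\supseteq[r]$ (as noted just before the lemma), and for such an $m$ and any $i\in[r]$ one has $i\in\operatorname{im}(m)$, hence $m\notin\operatorname{im}(\Delta_i^r)$ and $d_i^r(E_m)=0$ by Lemma~\ref{lem:face_action} together with the image computation in the proof of Proposition~\ref{prop:missing_indices}. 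Since $\partial_r(Z_r(N_\bullet))=0\subseteq Z_{r-1}(N_\bullet)$, the graded submodule $(Z_r(N_\bullet))_{r\ge 0}$ is closed under $\partial$ and is therefore a chain subcomplex of $(X_\bullet,\partial)$, with zero differential; its homology is $H_r\bigl(Z_\bullet(N_\bullet)\bigr)=Z_r(N_\bullet)/0\cong Z_r(N_\bullet)$.

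For the short exact sequence I would note that the inclusion $\iota:Z_\bullet(N_\bullet)\hookrightarrow X_\bullet$ is a chain map, because $\partial_r\circ\iota_r = 0 = \iota_{r-1}\circ 0$; hence $Z_\bullet(N_\bullet)$ is a subcomplex, the quotient $X_\bullet/Z_\bullet(N_\bullet)$ carries the induced differential making the projection a chain map, and levelwise $0\to Z_r(N_\bullet)\to X_r\to X_r/Z_r(N_\bullet)\to 0$ is exact, yielding~\eqref{eq:short-exact}. There is no genuine obstacle here: the sole piece of content is the identity $Z_r(N_\bullet)=\bigcap_{i=0}^{r}\ker d_i^r$, after which everything is formal, the one point to state with care being that the subcomplex condition $\partial_r(Z_r)\subseteq Z_{r-1}$ holds for the trivial reason that $\partial_r(Z_r)=0$.
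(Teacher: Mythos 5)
Your proof is correct and is essentially the same as the paper's: both observe that $Z_r(N_\bullet)=\bigcap_{i=0}^{r}\ker d_i^r$, so every face map vanishes on it, hence $\partial_r$ vanishes, making $Z_\bullet(N_\bullet)$ a subcomplex with zero differential whose homology is itself, with the short exact sequence then formal. Your extra remark verifying the vanishing on the basis $\{E_m:\operatorname{im}(m)\supseteq[r]\}$ is a correct supplementary check but not needed for the argument.
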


\begin{proof}
By definition, $Z_r(N_\bullet) = \bigcap_{i=0}^r \ker d_i$.
If $x \in Z_r(N_\bullet)$, then $d_i(x)=0$ for all $i$. Therefore the boundary
$\partial_r(x) = \sum (-1)^i d_i(x) = 0$. This confirms that
$Z_\bullet(N_\bullet)$ is a chain subcomplex with zero differential.
The homology of a complex with zero differential is the complex itself.
\end{proof}

\begin{corollary}[Quotient homology]\label{cor:normalized-cycle-quotient}
For the DSTM $X_\bullet(\vec{s};A)$, which is contractible by Theorem~\ref{thm:contractible},
the short exact sequence of chain complexes \eqref{eq:short-exact} induces isomorphisms
\[
H_r\!\bigl(X_\bullet/Z_\bullet(N_\bullet)\bigr)\ \cong\ H_{r-1}\!\bigl(Z_\bullet(N_\bullet)\bigr) \cong Z_{r-1}(N_\bullet)
\]
for all $r\ge 1$.
\end{corollary}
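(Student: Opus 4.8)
The plan is to derive everything from the long exact homology sequence attached to the short exact sequence of chain complexes \eqref{eq:short-exact} in Lemma~\ref{lem:normalized-cycle-subcomplex}, combined with the contractibility of $X_\bullet(\vec{s};A)$ (Theorem~\ref{thm:contractible}) and the explicit description of the homology of $Z_\bullet(N_\bullet)$ already provided in that lemma. First I would invoke the standard long exact sequence in homology
\[
\cdots \longrightarrow H_r(Z_\bullet(N_\bullet)) \longrightarrow H_r(X_\bullet) \longrightarrow H_r\bigl(X_\bullet/Z_\bullet(N_\bullet)\bigr) \xrightarrow{\ \partial\ } H_{r-1}(Z_\bullet(N_\bullet)) \longrightarrow H_{r-1}(X_\bullet) \longrightarrow \cdots
\]
associated to \eqref{eq:short-exact}. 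Since $X_\bullet(\vec{s};A)$ is contractible, $H_r(X_\bullet) = 0$ and $H_{r-1}(X_\bullet) = 0$ for all $r \ge 1$, so the connecting homomorphism $\partial$ is both injective (its left neighbor $H_r(X_\bullet)$ vanishes) and surjective (its right neighbor $H_{r-1}(X_\bullet)$ vanishes), hence an isomorphism
\[
H_r\bigl(X_\bullet/Z_\bullet(N_\bullet)\bigr)\ \xrightarrow{\ \cong\ }\ H_{r-1}\bigl(Z_\bullet(N_\bullet)\bigr)
\]
for every $r \ge 1$. This establishes the first claimed isomorphism.

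For the second isomorphism, I would simply cite Lemma~\ref{lem:normalized-cycle-subcomplex}, which shows that the differential on the subcomplex $Z_\bullet(N_\bullet)$ is identically zero, so its homology in each degree equals the module itself: $H_{r-1}(Z_\bullet(N_\bullet)) \cong Z_{r-1}(N_\bullet)$. Composing the two isomorphisms gives the stated chain $H_r(X_\bullet/Z_\bullet(N_\bullet)) \cong H_{r-1}(Z_\bullet(N_\bullet)) \cong Z_{r-1}(N_\bullet)$ for all $r \ge 1$. The only small point to be careful about is degree bookkeeping at the bottom of the complex: for $r = 1$ the relevant term $H_0(Z_\bullet(N_\bullet)) = Z_0(N_\bullet)$ is the kernel of $d_0$ in degree $0$, which in the DSTM is all of $X_0 \cong A$ (since there are no face maps to intersect beyond $d_0$, and actually $\operatorname{im}(m) \supseteq [0] = \{0\}$ is the only constraint), so the formula remains valid.

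The main (and essentially only) obstacle is confirming that nothing degenerates in low degrees in a way that breaks the long exact sequence — i.e., checking that the quotient complex $X_\bullet/Z_\bullet(N_\bullet)$ is well-defined as a chain complex (immediate from Lemma~\ref{lem:normalized-cycle-subcomplex}, which already asserts \eqref{eq:short-exact} is a short exact sequence of chain complexes) and that the long exact sequence terminates correctly. Since $X_\bullet$ is a non-negatively graded chain complex and the subcomplex $Z_\bullet(N_\bullet)$ is too, the long exact sequence extends down to degree $0$ with $H_{-1} = 0$ throughout, so there is no edge effect. Thus the proof is a short and routine diagram chase; the substantive content — contractibility and the vanishing of the differential on $Z_\bullet(N_\bullet)$ — has already been carried out in Theorem~\ref{thm:contractible} and Lemma~\ref{lem:normalized-cycle-subcomplex}.
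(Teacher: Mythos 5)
Your proposal is correct and follows exactly the paper's own argument: take the long exact sequence of the short exact sequence \eqref{eq:short-exact}, use contractibility to kill $H_r(X_\bullet)$ so the connecting map is an isomorphism, and use the zero-differential observation from Lemma~\ref{lem:normalized-cycle-subcomplex} to identify $H_{r-1}(Z_\bullet(N_\bullet))$ with $Z_{r-1}(N_\bullet)$. Nothing further to add.
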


\begin{proof}
By Theorem~\ref{thm:contractible}, the DSTM $X_\bullet(\vec{s};A)$ is contractible, hence
$H_r(X_\bullet)=0$ for all $r\ge 0$.
Taking homology of the short exact sequence \eqref{eq:short-exact} gives a long exact sequence
\[
\cdots\to H_r(Z_\bullet(N_\bullet))\to H_r(X_\bullet)\to
H_r(X_\bullet/Z_\bullet(N_\bullet))\to H_{r-1}(Z_\bullet(N_\bullet))\to\cdots.
\]
The differential on $Z_\bullet(N_\bullet)$ is zero, so
$H_r(Z_\bullet(N_\bullet))\cong Z_r(N_\bullet)$ for all $r$.
Using $H_r(X_\bullet)=0$ and exactness, we obtain
\[
H_r\!\bigl(X_\bullet/Z_\bullet(N_\bullet)\bigr)\ \cong\ H_{r-1}\!\bigl(Z_\bullet(N_\bullet)\bigr)
\cong Z_{r-1}(N_\bullet).
\]
\end{proof}

\begin{remark}[Homology spheres]
For a normalized cycle $C\in Z_n(N_\bullet)$, the generated simplicial subobject $\langle C \rangle \subseteq X_\bullet$ (the degeneracy-closure) provides an algebraic model of the $n$-sphere. If $C$ is primitive (e.g., a basis element $E_m$ over a PID $A$), $\langle C \rangle$ is isomorphic to the simplicial sphere $A[\sDelta^n]/\operatorname{Sk}_{n-1}(A[\sDelta^n])$. It satisfies $H_n(\langle C \rangle)\cong A$ and $H_m(\langle C \rangle)=0$ for $m\ne n$. (See Weibel~\cite[Exercise 8.3.4]{Weibel}.)
\end{remark}

\section{The Geometry of Generated Subobjects and Moduli Spaces}
\label{sec:generated}

We analyze the isomorphism classes of the simplicial submodules
$\langle T \rangle \subseteq X_\bullet(\vec s; K)$ generated by a tensor
$T\in X_n$. We assume the ground ring $K$ is an infinite field.

\subsection{Realization Maps and Kernel Sequences}

Let $C_\bullet := K[\sDelta^n]$ be the standard simplicial $K$-module representing the
$n$-simplex. In degree $p$, $C_p = K[\sDelta^n]_p$ is the free $K$-vector space
with basis the set of morphisms $\sDelta([p],[n])$. The dimension is
$S_p := \dim(C_p) = \binom{n+p+1}{p+1}$. Let
$\iota_n \in C_n$ be the generator corresponding to the identity map.

\begin{definition}[Realization Map and Kernel Sequence]
For $T\in X_n(\vec{s};K)$, the \textbf{realization map}
$f_T: C_\bullet \to X_\bullet(\vec s; K)$ is the unique morphism of
simplicial modules sending $\iota_n$ to $T$.
The \textbf{kernel sequence} of $T$ is the simplicial submodule
\[
K(T)_\bullet := \ker(f_T) \subset C_\bullet,
\]
defined degreewise by $K(T)_p := \ker(f_{T,p}) \subseteq C_p$.
\end{definition}

The image is $\langle T \rangle_\bullet = \operatorname{im}(f_T)$, and the
realization map induces an isomorphism of simplicial $K$-modules
\[
C_\bullet / K(T)_\bullet \;\xrightarrow{\ \cong\ }\; \langle T \rangle_\bullet.
\]
The isomorphism class of $\langle T \rangle_\bullet$ is therefore determined by
its kernel sequence $K(T)_\bullet$.

\begin{remark}[Finite Generation]\label{rem:finite_generation}
Since $K[\sDelta^n]$ is generated in degree $n$, any simplicial submodule is
determined by its components up to degree $n$.
\end{remark}

\begin{proposition}[Quotients and generated submodules]\label{prop:isomorphism-classes-and-orbits}
For each $T\in X_n(\vec{s};K)$, the realization map
$f_T:C_\bullet\to X_\bullet(\vec s;K)$ induces an isomorphism
\[
C_\bullet / K(T)_\bullet \xrightarrow{\ \cong\ } \langle T\rangle_\bullet.
\]
In particular, all invariants of the generated submodule $\langle T\rangle_\bullet$
can be computed from its kernel sequence $K(T)_\bullet \subseteq C_\bullet$.
\end{proposition}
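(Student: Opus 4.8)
The plan is to reduce the statement to the first isomorphism theorem applied internally in the abelian category of simplicial $K$-modules, with the only real work being the compatibility of the degreewise isomorphisms with the simplicial structure maps. First I would recall why $f_T$ exists and is unique: $C_\bullet = K[\Delta^n]$ represents the functor $M_\bullet \mapsto M_n$ on simplicial $K$-modules (free--forgetful adjunction composed with the Yoneda lemma), so specifying a morphism of simplicial modules $C_\bullet \to X_\bullet(\vec s;K)$ is the same as specifying an element of $X_n(\vec s;K)$; taking that element to be $T$ produces $f_T$, and concretely $f_{T,p}$ is the $K$-linear extension of $\alpha \mapsto X(\alpha)(T)$ for $\alpha \in \Delta([p],[n])$.

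Next I would record the two structural facts. Because $f_T$ is a morphism of simplicial modules, $d_i f_{T,p} = f_{T,p-1} d_i$ and $s_i f_{T,p} = f_{T,p+1} s_i$, so every face and degeneracy map carries $K(T)_p = \ker f_{T,p}$ into the kernel in the adjacent degree, and likewise carries $\langle T\rangle_p = \operatorname{im} f_{T,p}$ into the image in the adjacent degree; hence $K(T)_\bullet \subseteq C_\bullet$ and $\langle T\rangle_\bullet \subseteq X_\bullet(\vec s;K)$ are simplicial submodules, and $C_\bullet / K(T)_\bullet$ acquires a simplicial $K$-module structure with maps induced degreewise from those of $C_\bullet$. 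In each degree the ordinary first isomorphism theorem in $\mathbf{Mod}_K$ factors $f_{T,p}$ as $C_p \twoheadrightarrow C_p/K(T)_p \xrightarrow{\ \bar f_{T,p}\ } \langle T\rangle_p$ with $\bar f_{T,p}$ a $K$-linear isomorphism.

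It then remains only to check that the family $(\bar f_{T,p})_p$ is itself a morphism of simplicial modules. The cleanest route is to observe that $\bar f_T$ is by construction the unique factorization of the simplicial map $f_T$ through the simplicial epimorphism $C_\bullet \twoheadrightarrow C_\bullet/K(T)_\bullet$, and that a factorization of a simplicial map through a simplicial epimorphism is automatically simplicial; alternatively one verifies it by hand, e.g.\ $\bar f_{T,p-1}(d_i \bar x) = \bar f_{T,p-1}(\overline{d_i x}) = f_{T,p-1}(d_i x) = d_i f_{T,p}(x) = d_i \bar f_{T,p}(\bar x)$, and similarly for the $s_i$. This yields the isomorphism $C_\bullet/K(T)_\bullet \xrightarrow{\cong} \langle T\rangle_\bullet$ of simplicial $K$-modules, and the ``in particular'' clause follows at once, since every invariant of $\langle T\rangle_\bullet$ is an invariant of $C_\bullet/K(T)_\bullet$, which depends only on the simplicial submodule $K(T)_\bullet \subseteq C_\bullet$. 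I do not expect a genuine obstacle here: there is no homotopy-theoretic content beyond the representability of $\Delta^n$, and the single point needing care is the bookkeeping that makes the naturality of $\bar f_T$ in the simplicial variable explicit.
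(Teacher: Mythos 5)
Your proposal is correct and follows essentially the same route as the paper's own proof: both apply the first isomorphism theorem degreewise via the universal property of quotients and observe that the resulting map is automatically a morphism of simplicial $K$-modules. Your version is more explicit about the representability of $K[\Delta^n]$ and about checking compatibility of $\bar f_T$ with the structure maps, but the underlying argument is the same.
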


\begin{proof}
Since $K(T)_\bullet = \ker(f_T)$, the universal property of quotients gives a
simplicial map
\[
C_\bullet/K(T)_\bullet \longrightarrow \langle T\rangle_\bullet
\]
which is an isomorphism in each degree, because $K(T)_p = \ker(f_{T,p})$ and
$\langle T\rangle_p = \operatorname{im}(f_{T,p})$.
\end{proof}

\begin{remark}
For our purposes it suffices that each generated submodule $\langle T\rangle_\bullet$
arises as a quotient $C_\bullet/K(T)_\bullet$, and that all invariants we study
(e.g.\ homology groups and the incidence conditions defining the moduli space)
depend only on the kernel sequence $K(T)_\bullet \subseteq C_\bullet$.
We make no assertion that isomorphic quotients have isomorphic kernels, nor that
every isomorphism between quotients lifts to an automorphism of $C_\bullet$.
\end{remark}

\subsection{Index Collisions and the Realization Matrix}

Let $R_p := \dim(X_p)$.

\begin{definition}[Index collision map]\label{def:index_collision_map}
For each $p\ge 0$, the \textbf{index collision map} is
\[
\mathcal{I}_p: I_p \times \sDelta([p],[n]) \longrightarrow I_n,
\qquad
\mathcal{I}_p(m, \alpha) := I_\bullet(\alpha)(m).
\]
\end{definition}

\begin{definition}[Realization matrix]
The \textbf{realization matrix} $M_{T,p} \in \operatorname{Mat}_{R_p\times S_p}(K)$
represents $f_{T,p}$. Its entry in row $m \in I_p$ and column
$\alpha \in \sDelta([p],[n])$ is
\[
M_{T,p}[m, \alpha]
= (f_{T,p}(\alpha))(m)
= T_{\mathcal{I}_p(m, \alpha)}.
\]
\end{definition}

A \textbf{collision} in degree $p$ is a pair
$(m,\alpha)\neq (m',\alpha')$ with
$\mathcal{I}_p(m,\alpha)=\mathcal{I}_p(m',\alpha')$.
Collisions force the corresponding entries of $M_{T,p}$ to agree for every
$T$, and are the source of rank loss in the symbolic matrices
$M_{T_{\mathrm{sym}},p}$.

\subsection{The Moduli Map and Grassmannians}

Let $\mathcal{V} = \{v_u\}_{u \in I_n}$ be indeterminates and
$\mathbb{K} := K(\mathcal{V})$ the function field. The symbolic tensor
$T_{\mathrm{sym}}$ has entries $v_u$.

\begin{definition}[Generic Rank and Kernel Dimension]
The \textbf{generic rank} $R'_p(\vec s)$ is the rank of the symbolic
matrix $M_{T_{\mathrm{sym}},p}$ over $\mathbb{K}$.
The \textbf{generic kernel dimension} is
$K'_p(\vec s) := S_p - R'_p(\vec s)$.
\end{definition}

\begin{definition}[Moduli Map]
The \textbf{generic locus} $\mathcal{U} \subset X_n(\vec s;K)$ is the
(non-empty) Zariski open set where $\rank(M_{T,p}) = R'_p(\vec s)$
for all $0\le p\le n$.

The \textbf{Grassmannian associated to the shape} $\vec s$ is
\[
\operatorname{Gr}(\vec s)
:= \prod_{\substack{0\le p\le n\\ K'_p(\vec s)>0}}
   \operatorname{Gr}\bigl(K'_p(\vec{s}), K[\sDelta^n]_p\bigr).
\]
By Remark~\ref{rem:finite_generation}, a simplicial submodule of
$C_\bullet=K[\sDelta^n]$ is determined by its components in degrees
$0\le p\le n$, so recording the kernels $K(T)_p=\ker f_{T,p}$ in this
range determines the entire kernel sequence $K(T)_\bullet$.

The \textbf{Moduli Map} $\Psi$ is the algebraic map:
\[
\Psi: \mathcal{U} \longrightarrow \operatorname{Gr}(\vec s),
\quad
T \mapsto (\ker f_{T,p})_{\substack{0\le p\le n\\ K'_p(\vec s)>0}}.
\]

The \textbf{Moduli Space of Kernel Sequences} $\mathcal{M}(\vec s)$ is
the image $\Psi(\mathcal{U})$.
\end{definition}

\subsection{Injectivity Analysis and Examples}

\begin{lemma}\label{lem:I_bullet_preserves_mono}
The functor $I_\bullet: \sDelta \to \mathbf{Set}$ preserves monomorphisms.
\end{lemma}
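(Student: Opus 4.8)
The plan is to reduce the statement to two elementary facts: that $I_\bullet$ is assembled from the standard coface maps, and that products and composites of injective maps of sets are injective.

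First I would invoke the standard epi--mono factorization in $\Delta$: a monomorphism $\phi\colon[p]\to[q]$ is precisely an injective order-preserving map, and any such map is a finite composite of coface maps,
\[
\phi=\delta^{q}_{i_{q-p}}\circ\cdots\circ\delta^{p+2}_{i_2}\circ\delta^{p+1}_{i_1},
\]
where each $\delta^{p+r}_{i_r}\colon[p+r-1]\to[p+r]$ is a standard coface. Since $I_\bullet$ is a functor, applying it turns this into the composite of the corresponding index coface maps,
\[
I_\bullet(\phi)=\Delta^{q}_{i_{q-p}}\circ\cdots\circ\Delta^{p+1}_{i_1}\colon I_p\longrightarrow I_q.
\]

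Next I would check that each index coface $\Delta^{r}_{i}\colon I_{r-1}\to I_r$ is injective. By definition $\Delta^{r}_{i}=\prod_{a=1}^{k}\delta^{M_a(r)}_{i}$; this is well-defined because $0\le i\le r\le M_a(r)$ for every axis $a$, so each factor $\delta^{M_a(r)}_{i}$ is a genuine coface map and hence injective. A finite product of injective maps of sets is injective, so $\Delta^{r}_{i}$ is injective; and a composite of injective maps of sets is injective, so $I_\bullet(\phi)$ is a monomorphism in $\mathbf{Set}$.

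The argument is entirely formal; the only point deserving a moment's care is the well-definedness of each factor $\delta^{M_a(r)}_{i}$ arising along the factorization, i.e.\ that the face index $i$ never exceeds the relevant dimension $M_a(r)$ — which is immediate from the uniform bound $M_a(r)\ge r$ recorded in Section~\ref{sec:dstm}. No substantive obstacle arises.
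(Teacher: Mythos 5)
Your proof is correct and follows essentially the same route as the paper: factor a monomorphism in $\Delta$ into standard coface maps, observe that $I_\bullet$ sends each $\delta_i$ to the product map $\Delta_i = \prod_a \delta_i^{M_a(\cdot)}$, which is injective as a product of injections, and conclude since functors preserve composites and composites of injections are injective. The paper's proof is more terse but rests on exactly the same two observations; your extra care about the well-definedness of the face index (i.e.\ $i \le M_a(r)$ via $M_a(r)\ge r$) is a minor point the paper leaves implicit.
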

\begin{proof}
Monomorphisms in $\sDelta$ are generated by coface maps $\delta_i$.
$I_\bullet(\delta_i^p) = \Delta_i^p$ is a product of injections, hence
injective.
\end{proof}

\begin{theorem}[Injectivity and Dominance at $p=0$]\label{thm:index_collision_injectivity}
In the DSTM:
\begin{enumerate}
    \item The map $\mathcal{I}_0$ is injective. $R'_0 = \min(R_0, S_0)$.
    \item If $K'_0>0$, the projection
          $\Psi_0: \mathcal{U} \to \operatorname{Gr}(K'_0, K[\sDelta^n]_0)$ is dominant.
\end{enumerate}
\end{theorem}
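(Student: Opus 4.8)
The plan is to derive both parts from a single structural feature of the shape. Since $n=\min_a n_a-1$, there is at least one axis $a_0$ with $n_{a_0}=n+1$, so $M_{a_0}(p)=n_{a_0}-1-n+p=p$ for every $p$; along this axis the cosimplicial functor $I_\bullet$ has structure maps the standard $\delta_i^p,\sigma_i^p$, i.e.\ its $a_0$-component is the forgetful functor $\Delta\to\mathbf{Set}$, and the $a_0$-th factor of $I_0=\prod_a[M_a(0)]$ is $[0]=\{0\}$.

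\emph{Injectivity of $\mathcal{I}_0$ and the generic rank.} Write $\alpha_v\in\Delta([0],[n])$ for the map with $\alpha_v(0)=v$. For every $m\in I_0$ we have $m_{a_0}=0$, so the $a_0$-coordinate of $\mathcal{I}_0(m,\alpha_v)=I_\bullet(\alpha_v)(m)$ is $\alpha_v(m_{a_0})=v$; thus the vertex $v$ is recoverable from the image. For fixed $v$, the map $m\mapsto I_\bullet(\alpha_v)(m)$ is injective because $\alpha_v$ is a monomorphism and $I_\bullet$ preserves monomorphisms (Lemma~\ref{lem:I_bullet_preserves_mono}). Hence $\mathcal{I}_0(m,\alpha_v)=\mathcal{I}_0(m',\alpha_{v'})$ forces $v=v'$ and then $m=m'$, so $\mathcal{I}_0$ is injective. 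Consequently the $R_0S_0$ entries $M_{T_{\mathrm{sym}},0}[m,\alpha]=v_{\mathcal{I}_0(m,\alpha)}$ of the symbolic realization matrix are pairwise distinct indeterminates, so $M_{T_{\mathrm{sym}},0}$ is a generic $R_0\times S_0$ matrix over $\mathbb{K}$; any $r\times r$ minor with $r\le\min(R_0,S_0)$ is a sum of $r!$ distinct monomials, hence nonzero, so $R'_0=\operatorname{rank}_{\mathbb{K}}M_{T_{\mathrm{sym}},0}=\min(R_0,S_0)$ and $K'_0=\max(S_0-R_0,0)$.

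\emph{Dominance at $p=0$.} If $K'_0>0$ then $S_0>R_0$, so $R'_0=R_0$ and, for $T\in\mathcal{U}$, $M_{T,0}$ has full row rank; thus $\Psi_0(T)=\ker f_{T,0}=\ker\bigl(M_{T,0}\colon K^{S_0}\to K^{R_0}\bigr)$ lies in $\operatorname{Gr}(S_0-R_0,S_0)=\operatorname{Gr}(K'_0,S_0)$. Factor $\Psi_0|_{\mathcal{U}}=\kappa\circ\beta$, where $\beta\colon X_n\to\operatorname{Mat}_{R_0\times S_0}(K)$, $T\mapsto M_{T,0}$, and $\kappa\colon\mathcal{O}\to\operatorname{Gr}(S_0-R_0,S_0)$, $A\mapsto\ker A$, with $\mathcal{O}$ the open locus of rank-$R_0$ matrices. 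Since $\mathcal{I}_0$ is injective, each entry of $M_{T,0}$ is a distinct coordinate of $T$, so $\beta$ is (after relabelling) the coordinate projection $K^{I_n}\to K^{\operatorname{im}\mathcal{I}_0}$: a surjective linear map, hence open, whence $\beta(\mathcal{U})$ is a nonempty open — therefore dense — subset of the irreducible variety $\operatorname{Mat}_{R_0\times S_0}(K)$, and it lies in $\mathcal{O}$. The morphism $\kappa$ is surjective (every codimension-$R_0$ subspace is the kernel of a full-rank matrix), and a surjective morphism sends dense subsets to dense subsets; hence $\Psi_0(\mathcal{U})=\kappa(\beta(\mathcal{U}))$ is dense in $\operatorname{Gr}(K'_0,S_0)$, i.e.\ $\Psi_0$ is dominant.

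\emph{Main obstacle.} The only step carrying real content is the identification of the $a_0$-component of $I_\bullet$, which produces the vertex read-off and thereby the disjointness $\operatorname{im} I_\bullet(\alpha_v)\cap\operatorname{im} I_\bullet(\alpha_{v'})=\varnothing$ for $v\ne v'$; once $\mathcal{I}_0$ is known injective, the rank formula is the generic-matrix computation and the dominance is formal. A secondary point to watch is that $\mathcal{U}$ is cut out by rank conditions in all degrees, not just $p=0$; this is harmless precisely because $\beta$ is linear, so its fibres are parallel affine subspaces — which is why the argument is routed through $\beta$ rather than by producing a prescribed kernel from an explicit tensor of $\mathcal{U}$.
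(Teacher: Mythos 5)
Your proof is correct and follows essentially the same route as the paper's: you pick an axis $a_0$ with $M_{a_0}(0)=0$ so that the $a_0$-coordinate of $\mathcal{I}_0(m,\alpha_v)$ reads off the vertex $v$, then appeal to Lemma~\ref{lem:I_bullet_preserves_mono} to recover $m$, and you obtain dominance by factoring $\Psi_0$ through the coordinate-projection $T\mapsto M_{T,0}$ followed by the kernel map to the Grassmannian. You fill in a few details the paper leaves implicit (the minor-based computation of $R'_0=\min(R_0,S_0)$ and the explicit density chase through $\beta$ and $\kappa$), but the key structural ideas are identical.
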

\begin{proof}
(1) $S_0=n+1$. Let $a_0$ be an axis where $n_{a_0}=n+1$. Then $M_{a_0}(0)=0$.
For $m \in I_0$, $m_{a_0}=0$. Let $\alpha_i:[0]\to[n]$ be the vertex map
$\alpha_i(0)=i$. The $a_0$-th coordinate of $\mathcal{I}_0(m, \alpha_i)$ is
$\alpha_i(m_{a_0}) = i$. If
$\mathcal{I}_0(m, \alpha_i) = \mathcal{I}_0(m', \alpha_{i'})$, then
$i=i'$. By Lemma~\ref{lem:I_bullet_preserves_mono}, $I_\bullet(\alpha_i)$
is injective, so $m=m'$.

(2) Since $\mathcal{I}_0$ is injective, the entries of $M_{T_{\mathrm{sym}},0}$
are distinct variables. The map
$L_0: X_n \to \operatorname{Mat}_{R_0\times S_0}(K)$ sending $T$ to
$M_{T,0}$ is a projection onto these coordinates, hence surjective. The
map from the space of matrices of maximal rank $R'_0$ to the Grassmannian
of their kernels is dominant. $\Psi_0$ is the composition of $L_0$
(restricted to $\mathcal{U}$) and this dominant map.
\end{proof}

\begin{lemma}[Non-injectivity for $p\ge1$]\label{lem:noninj-all-p>0}
For $p\ge 1$, $\mathcal{I}_p$ is not injective.
\end{lemma}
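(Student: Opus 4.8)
The plan is to exhibit an explicit collision $\mathcal{I}_p(m,\alpha) = \mathcal{I}_p(m',\alpha')$ with $(m,\alpha) \neq (m',\alpha')$, which by definition shows $\mathcal{I}_p$ is not injective. Recall $\mathcal{I}_p(m,\alpha) = I_\bullet(\alpha)(m)$, where for $\alpha \in \Delta([p],[n])$ the map $I_\bullet(\alpha)\colon I_p \to I_n$ acts coordinatewise by the standard simplicial operator $\alpha$ on $[M_a(p)] \to [M_a(n)]$ via the factorization into cofaces and codegeneracies. The key observation is that when $p \ge 1$, the set $\Delta([p],[n])$ contains a non-injective map (a codegeneracy-type map, since $p \ge 1$ means $[p]$ has at least two elements that can be collapsed), and this forces distinct elements of $I_p$ to have the same image under $I_\bullet(\alpha)$, or alternatively two distinct $\alpha,\alpha'$ can send different $m,m'$ to the same target.

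First I would pick a concrete $\alpha$. Take $\alpha\colon[p]\to[n]$ to be any non-injective order-preserving map (this exists precisely because $p \ge 1$ and $n \ge \min(\vec s) - 1 \ge 0$; if $n = 0$ every map $[p]\to[0]$ is constant and hence non-injective for $p\ge 1$, and if $n\ge 1$ one can still take a constant map). Say $\alpha(0)=\alpha(1)=0$. Then $I_\bullet(\alpha)$ factors through a codegeneracy in each coordinate, so it is not injective on $I_p$: there exist $m \neq m'$ in $[p]^k \subseteq I_p$ with $I_\bullet(\alpha)(m) = I_\bullet(\alpha)(m')$. Concretely, if in some axis $a$ the map $\alpha$ (as an operator $[M_a(p)]\to[M_a(n)]$) identifies two values $v\neq v'$, then choosing $m$ and $m'$ to differ only in axis $a$, with $m_a = v$ and $m'_a = v'$, gives $\mathcal{I}_p(m,\alpha) = \mathcal{I}_p(m',\alpha)$ while $m \neq m'$. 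I would verify that such $v, v'$ lie in $[M_a(p)]$ (they do since $M_a(p) \ge p \ge 1$) and that the resulting $m, m'$ lie in $I_p$ (they do, since $[p]^k \subseteq I_p$).

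An even cleaner route, which I would likely present, uses the contrast with Theorem~\ref{thm:index_collision_injectivity}: at $p = 0$ injectivity of $\mathcal{I}_0$ came from the fact that every $\alpha\colon[0]\to[n]$ is a monomorphism, so $I_\bullet(\alpha)$ is injective by Lemma~\ref{lem:I_bullet_preserves_mono}, and distinct $\alpha$'s have disjoint images because the value $\alpha(0)$ is recorded in a coordinate $a_0$ with $M_{a_0}(0)=0$. For $p\ge 1$ both ingredients fail: there are non-monic $\alpha$'s, and even restricting to monic $\alpha$'s the images $I_\bullet(\alpha)(I_p)$ overlap. I would spell out the first failure as the proof proper (it suffices), and perhaps remark on the second. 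The main (minor) obstacle is purely bookkeeping: making sure the chosen witnesses $m, m'$ genuinely land in $I_p = \prod_a [M_a(p)]$ and that the codegeneracy factor of $I_\bullet(\alpha)$ in the relevant axis really does collapse the two chosen index values — both follow immediately from $M_a(p) \ge p \ge 1$ and the explicit description of how order-preserving maps act as products of standard generators, so there is no real difficulty, only care in writing the coordinates.

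\begin{proof}
Since $p\ge 1$, the object $[p]$ has at least two elements, so there exists a non-injective order-preserving map $\alpha\colon[p]\to[n]$; explicitly, take $\alpha$ with $\alpha(0)=\alpha(1)$ (for instance the constant map at $0$, which is order-preserving for every $n\ge 0$). Fix any axis $a$ with $1\le a\le k$. Viewing $\alpha$ as the induced simplicial operator on the $a$-th coordinate, $I_\bullet(\alpha)$ restricted to that factor is the map $[M_a(p)]\to[M_a(n)]$ determined by $\alpha$ (recall $M_a(p)\ge p$ and $M_a(n)\ge n$, so $\alpha$ makes sense in each degree), and because $\alpha$ is not injective this coordinate map is not injective: it identifies the values $0$ and $1$ in $[M_a(p)]$, both of which lie in $[M_a(p)]$ since $M_a(p)\ge p\ge 1$.

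Now define $m,m'\in [p]^k\subseteq I_p$ by setting all coordinates equal to $0$ except the $a$-th: let $m_a=0$ and $m'_a=1$, and $m_b=m'_b=0$ for $b\ne a$. Then $m\ne m'$, but since $\alpha$ sends both $0$ and $1$ in $[M_a(p)]$ to the same element of $[M_a(n)]$, and acts identically in every other coordinate, we get
\[
\mathcal{I}_p(m,\alpha)=I_\bullet(\alpha)(m)=I_\bullet(\alpha)(m')=\mathcal{I}_p(m',\alpha).
\]
Thus $(m,\alpha)\ne(m',\alpha)$ while $\mathcal{I}_p(m,\alpha)=\mathcal{I}_p(m',\alpha)$, so $\mathcal{I}_p$ is not injective.
\end{proof}
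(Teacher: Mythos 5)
Your proof is correct and takes essentially the same approach as the paper: both choose the constant map $\alpha_0\colon[p]\to[n]$ and show $I_\bullet(\alpha_0)$ is not injective. The only difference is that the paper deduces non-injectivity from the cardinality comparison $|I_p|>|I_0|$ (using the factorization $\alpha_0=\iota_0\circ\pi_0$), whereas you exhibit the explicit collision witnesses $m=\vec 0$ and $m'$ differing in one coordinate — a minor variation, arguably a bit more transparent, but not a genuinely different route.
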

\begin{proof}
Let $\alpha_0:[p]\to[n]$ be the constant map to $0$. It factors as
$\alpha_0 = \iota_0 \circ \pi_0$. Since $p\ge 1$, $|I_p| > |I_0|$.
The map $I_\bullet(\pi_0): I_p \to I_0$ cannot be injective. There exist
$m \ne m'$ such that $I_\bullet(\pi_0)(m) = I_\bullet(\pi_0)(m')$.
Applying $I_\bullet(\iota_0)$ yields
$\mathcal{I}_p(m, \alpha_0) = \mathcal{I}_p(m', \alpha_0)$.
\end{proof}

\begin{example}[Rank Drop: Shape (2,2)]\label{ex:rank_drop_22}
$n=1, k=2$. $S=(2, 3)$. $R=(1, 4)$.
$p=1$. $S_1=3, R_1=4$. The morphisms $\sDelta([1],[1])$ are
$\{\operatorname{id}, (0,0), (1,1)\}$. $I_1=[1]^2$. The symbolic realization
matrix $M_{T_{\mathrm{sym}},1}$ is:
\[
M = \begin{pmatrix}
v_{00} & v_{00} & v_{11} \\
v_{01} & v_{00} & v_{11} \\
v_{10} & v_{00} & v_{11} \\
v_{11} & v_{00} & v_{11}
\end{pmatrix}.
\]
All four $3\times 3$ minors vanish identically over $\mathbb{K}$. The
generic rank is $R'_1=2$. $K'_1=1$.
\end{example}

\subsection{The Incidence Variety and Segre--Plücker Embedding}

The image $\mathcal{M}(\vec s)$ lies within the closed incidence
subvariety $\mathbf{Gr}^{\mathrm{simp}}(\vec s) \subset \operatorname{Gr}(\vec s)$
defined by the simplicial compatibility conditions.

\begin{theorem}[Global Structure of the Moduli Space]\label{thm:incidence-global}
$\mathcal{M}(\vec s)$ is a constructible set contained in
$\mathbf{Gr}^{\mathrm{simp}}(\vec s)$.
The projection
$\mathcal M(\vec s)\to \operatorname{Gr}(K'_0(\vec s),S_0)$ is dominant
(if $K'_0(\vec s)>0$).
The moduli space $\mathcal{M}(\vec s)$ is irreducible and unirational.
\end{theorem}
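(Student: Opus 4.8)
The plan is to realize $\mathcal{M}(\vec s)$ as the image of an irreducible rational variety under an algebraic map, and then to deduce every clause of the theorem from standard properties of images of morphisms. First I would record the structure of the source. The generic locus $\mathcal U\subseteq X_n(\vec s;K)$ is a non-empty Zariski-open subset of the affine space $X_n(\vec s;K)\cong\mathbb A^{R_n}_K$, hence an irreducible variety whose function field is purely transcendental over $K$; in particular $\mathcal U$ is rational. On $\mathcal U$ each realization matrix $M_{T,p}$ has the constant rank $R'_p(\vec s)$, so for every $p$ with $K'_p(\vec s)>0$ the assignment $T\mapsto\ker f_{T,p}=K(T)_p$ is a morphism $\Psi_p:\mathcal U\to\operatorname{Gr}(K'_p(\vec s),S_p)$ (constant rank forces the kernel to vary algebraically), and $\Psi=(\Psi_p)_p$ is a morphism of varieties $\mathcal U\to\operatorname{Gr}(\vec s)$. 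By Chevalley's theorem (the image of a morphism of varieties over a field is constructible), $\mathcal M(\vec s)=\Psi(\mathcal U)$ is a constructible subset of $\operatorname{Gr}(\vec s)$.

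Second, I would verify the containment $\mathcal M(\vec s)\subseteq\mathbf{Gr}^{\mathrm{simp}}(\vec s)$. For $T\in\mathcal U$ the kernel sequence $K(T)_\bullet=\ker(f_T)$ is the kernel of a morphism of simplicial $K$-modules, hence a simplicial submodule of $C_\bullet=K[\Delta^n]$; in particular $d_i\bigl(K(T)_p\bigr)\subseteq K(T)_{p-1}$ and $s_i\bigl(K(T)_p\bigr)\subseteq K(T)_{p+1}$ for all admissible $i$ and $p$. Restricted to $0\le p\le n$, these inclusions are precisely the simplicial compatibility relations cutting out $\mathbf{Gr}^{\mathrm{simp}}(\vec s)$ inside $\operatorname{Gr}(\vec s)$; in degrees where $K'_p(\vec s)=0$ the corresponding Grassmann factor is dropped and the relevant relation $d_i\bigl(K(T)_{p+1}\bigr)\subseteq K(T)_p=0$ is automatic, since $T\in\mathcal U$ forces $K(T)_p=0$ there. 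Thus $\Psi(T)\in\mathbf{Gr}^{\mathrm{simp}}(\vec s)$ for all $T\in\mathcal U$, and since these relations are closed (indeed linear in the Segre--Pl\"ucker coordinates), $\mathbf{Gr}^{\mathrm{simp}}(\vec s)$ is a closed subvariety containing $\mathcal M(\vec s)$.

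Third, for the dominance clause assume $K'_0(\vec s)>0$. Composing the projection $\operatorname{Gr}(\vec s)\to\operatorname{Gr}(K'_0(\vec s),S_0)$ with $\Psi$ recovers exactly the map $\Psi_0$ of Theorem~\ref{thm:index_collision_injectivity}, so the image of $\mathcal M(\vec s)=\Psi(\mathcal U)$ under this projection equals $\Psi_0(\mathcal U)$, which is dense by Theorem~\ref{thm:index_collision_injectivity}(2). Hence the projection $\mathcal M(\vec s)\to\operatorname{Gr}(K'_0(\vec s),S_0)$ is dominant.

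Finally, irreducibility and unirationality are formal consequences. The continuous image of the irreducible space $\mathcal U$ under $\Psi$ is irreducible, so the constructible set $\mathcal M(\vec s)$ is dense in the irreducible closed variety $\overline{\mathcal M(\vec s)}$ and is therefore itself irreducible. Since $\mathcal U$ is rational and dense in $\mathbb A^{R_n}_K$, the morphism $\Psi$ extends to a rational map $\mathbb A^{R_n}_K\dashrightarrow\overline{\mathcal M(\vec s)}$ with dense image, i.e.\ a dominant rational map from affine space, which by definition makes $\overline{\mathcal M(\vec s)}$ — and, in the usual abuse of terminology for constructible sets, $\mathcal M(\vec s)$ itself — unirational. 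I expect the one genuinely delicate point to be the identification of the incidence relations: one must confirm that the simplicial compatibility conditions used to define $\mathbf{Gr}^{\mathrm{simp}}(\vec s)$ coincide with those automatically satisfied by kernels of simplicial maps in every degree $0\le p\le n$, including the degenerate degrees with $K'_p(\vec s)=0$. Once that bookkeeping is in place, the remaining assertions reduce to routine applications of Chevalley's theorem and of the stability of irreducibility and (uni)rationality under dominant morphisms.
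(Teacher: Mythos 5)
Your proof is correct and follows essentially the same route as the paper's: realize $\mathcal M(\vec s)$ as $\Psi(\mathcal U)$ with $\mathcal U$ an open dense subset of affine space, invoke Chevalley's theorem for constructibility, deduce irreducibility and unirationality from the fact that the image of an irreducible rational variety under a rational map has these properties, and cite Theorem~\ref{thm:index_collision_injectivity}(2) for dominance at $p=0$. You go somewhat further than the paper's terse proof by explicitly verifying the containment $\mathcal M(\vec s)\subseteq\mathbf{Gr}^{\mathrm{simp}}(\vec s)$ via the observation that $K(T)_\bullet=\ker(f_T)$ is a simplicial submodule, a step the paper takes for granted from the surrounding discussion.
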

\begin{proof}
$\mathcal{U}$ is an open subset of an affine space, hence irreducible.
$\mathcal{M}(\vec{s})$ is the image of an irreducible variety under a
rational map, hence it is constructible (Chevalley's theorem),
irreducible, and unirational. Dominance at $p=0$ follows from
Theorem~\ref{thm:index_collision_injectivity}.
\end{proof}

\begin{proposition}[Segre--Plücker linearity]\label{prop:segre-linear}
Embed $\operatorname{Gr}(\vec s)$ into a projective space via the
composition of the Plücker embeddings
$\iota_p : \operatorname{Gr}(K'_p,S_p) \hookrightarrow
\mathbb{P}(\Lambda^{K'_p} K[\sDelta^n]_p)$ and the Segre embedding
$\Sigma$. The image $\Sigma(\mathbf{Gr}^{\mathrm{simp}}(\vec s))$ is the
intersection of $\Sigma(\operatorname{Gr}(\vec s))$ with a linear
subspace of the ambient projective space.
\end{proposition}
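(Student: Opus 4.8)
The plan is to handle the simplicial compatibility conditions one generator at a time: each reduces to an incidence condition $\phi(U)\subseteq U'$ for a single face or degeneracy map $\phi$, this condition is bilinear in the two Plücker coordinate systems attached to $U$ and $U'$ (hence linear in their Segre product), and the finitely many resulting linear slices assemble to a single linear section of the total Segre--Plücker image.

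First I would record the defining conditions. Since $\Delta$ is generated by cofaces and codegeneracies and $C_\bullet=K[\Delta^n]$ is determined in degrees $\le n$, a tuple $(W_p)_{0\le p\le n}\in\operatorname{Gr}(\vec s)$ lies in $\mathbf{Gr}^{\mathrm{simp}}(\vec s)$ exactly when it is closed under the face and degeneracy maps of $C_\bullet$, i.e.\ $d_i(W_p)\subseteq W_{p-1}$ for $1\le p\le n$ and $s_i(W_p)\subseteq W_{p+1}$ for $0\le p\le n-1$ (all admissible $i$). This is a finite list, each entry of the form $\phi(U)\subseteq U'$ with $\phi\colon C_p\to C_q$ a fixed linear map ($q=p\mp1$), $U=W_p\in\operatorname{Gr}(K'_p,C_p)$ and $U'=W_q\in\operatorname{Gr}(K'_q,C_q)$. (When $K'_r\in\{0,S_r\}$ the corresponding Grassmannian factor is a point or is omitted; a condition whose target slot is thereby frozen to a fixed subspace, or whose source slot is $\{0\}$, is then either vacuous or a single linear Plücker constraint, and I would dispose of those cases at the end.)

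The heart of the argument is the following single-condition lemma. Let $P\in\Lambda^{K'_p}C_p$ and $Q\in\Lambda^{K'_q}C_q$ be the Plücker coordinates of $U$ and $U'$. By the standard facts that the contractions $\{\iota_\xi P:\xi\in\Lambda^{K'_p-1}C_p^{*}\}$ span $U$ and each lie in $U$, and that for $v\in C_q$ one has $v\wedge Q=0$ iff $v\in U'$, the condition $\phi(U)\subseteq U'$ is equivalent to $\phi(\iota_\xi P)\wedge Q=0$ in $\Lambda^{K'_q+1}C_q$ for all $\xi$. For fixed $\xi$ and fixed linear functional on $\Lambda^{K'_q+1}C_q$, the scalar $\phi(\iota_\xi P)\wedge Q$ is bilinear in $(P,Q)$; running over all $\xi$ and all such functionals yields a family of linear functionals on $\Lambda^{K'_p}C_p\otimes\Lambda^{K'_q}C_q$ whose common kernel is a linear subspace $\Lambda_\phi$. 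Hence a Segre point $[P\otimes Q]$ of $\operatorname{Gr}(K'_p,C_p)\times\operatorname{Gr}(K'_q,C_q)$ lies in $\mathbb P(\Lambda_\phi)$ iff $(U,U')$ satisfies $\phi(U)\subseteq U'$; that is, the $\phi$-incidence locus equals $\Sigma(\operatorname{Gr}(K'_p,C_p)\times\operatorname{Gr}(K'_q,C_q))\cap\mathbb P(\Lambda_\phi)$.

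Finally I would globalize and record the obstacle. The embedding $\Sigma$ sends $\operatorname{Gr}(\vec s)=\prod_p\operatorname{Gr}(K'_p,C_p)$ into $\mathbb P\bigl(\bigotimes_p\Lambda^{K'_p}C_p\bigr)$, and a linear subspace $\Lambda_\phi$ of the slot $\Lambda^{K'_p}C_p\otimes\Lambda^{K'_q}C_q$ lifts to the linear subspace $\widetilde\Lambda_\phi:=\Lambda_\phi\otimes\bigotimes_{r\neq p,q}\Lambda^{K'_r}C_r$; because tensoring with the nonzero decomposable vector $\bigotimes_{r\neq p,q}x_r$ detects membership in $\Lambda_\phi$, the $\phi$-condition on $\operatorname{Gr}(\vec s)$ reads as $\Sigma(\operatorname{Gr}(\vec s))\cap\mathbb P(\widetilde\Lambda_\phi)$. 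Intersecting over the finitely many generators,
\[
\Sigma\bigl(\mathbf{Gr}^{\mathrm{simp}}(\vec s)\bigr)=\Sigma\bigl(\operatorname{Gr}(\vec s)\bigr)\cap\mathbb P\bigl(\textstyle\bigcap_{\phi}\widetilde\Lambda_\phi\bigr),
\]
and $\bigcap_\phi\widetilde\Lambda_\phi$ is a linear subspace of the ambient space, which is the assertion. The only genuine obstacle is to check that the exterior-algebra reformulation of $\phi(U)\subseteq U'$ is an equivalence on Grassmannian points rather than merely a necessary condition — this is exactly where one uses that $\{\iota_\xi P\}$ exhausts $U$ and that $v\wedge Q=0\Leftrightarrow v\in U'$ for decomposable $P,Q$ — together with the bookkeeping of the degenerate factors $K'_r\in\{0,S_r\}$, so that the final statement is literally an intersection with a linear subspace and not merely a component of one.
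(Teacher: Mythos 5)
Your proof is correct, and it takes a genuinely different route to the key bilinear reformulation. The paper expresses the incidence $d_i(K_p)\subseteq K_{p-1}$ by wedging the exterior power $\Lambda^{K'_p}d_i(\omega_p)$ against a volume element $\eta_{p-1}$ of a \emph{complement} to $K_{p-1}$ — essentially an annihilator/Hodge-dual trick in which the complement's Plücker vector is a basis-dependent linear image of $\omega_{p-1}$. You instead use contractions: $\{\iota_\xi P:\xi\in\Lambda^{K'_p-1}C_p^{*}\}$ spans $U$ for decomposable $P$, so $\phi(U)\subseteq U'$ is equivalent to $\phi(\iota_\xi P)\wedge Q=0$ for all $\xi$. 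This buys two things. First, it needs no choice of complement or dual basis. Second, it remains an honest equivalence even on the locus where $d_i|_{K_p}$ fails to be injective, where $\Lambda^{K'_p}d_i(\omega_p)$ vanishes identically and the paper's test becomes vacuous — a subtlety the paper's one-line proof does not address. Both routes arrive at bilinear equations in $(P,Q)$, hence linear equations after Segre; your globalization step, lifting each slot-wise $\Lambda_\phi$ to $\widetilde\Lambda_\phi=\Lambda_\phi\otimes\bigotimes_{r\neq p,q}\Lambda^{K'_r}C_r$ and using the standard fact that tensoring with a fixed nonzero decomposable vector detects membership in $\Lambda_\phi$, is exactly what makes the product-Grassmannian statement precise, and is left implicit in the paper.
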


\begin{proof}
We analyze the condition $d_i(L_p) \subseteq L_{p-1}$. Let
$C_q = K[\sDelta^n]_q$. Let $\omega_p$ and $\omega_{p-1}$ be the Plücker
coordinates representing $L_p$ and $L_{p-1}$.
The condition is equivalent to
$\Lambda^{K'_p} d_i(\omega_p) \wedge \eta_{p-1} = 0$, where $\eta_{p-1}$
is a volume element of a complement to $L_{p-1}$. Under the Segre
embedding, these bilinear equations in the Plücker coordinates become
linear equations in the homogeneous coordinates of the tensor product
space.
\end{proof}

\begin{proposition}[Determinantal structure of the incidence variety]\label{prop:determinantal}
Let $C_p := K[\sDelta^n]_p$. On $\operatorname{Gr}(\vec s)$, let
$\mathcal{S}_p \subset C_p \otimes \mathcal{O}_{\operatorname{Gr}(\vec s)}$
denote the pullback of the tautological rank-$K'_p(\vec s)$ subbundle on
$\operatorname{Gr}(K'_p(\vec s),C_p)$ along the projection
$\operatorname{Gr}(\vec s) \to \operatorname{Gr}(K'_p(\vec s),C_p)$, and
let $\mathcal{S}_{p-1}$ be defined similarly in degree $p-1$.

For each pair $(p,i)$ with $K'_p(\vec s)>0$, the face map
$d_i : C_p \to C_{p-1}$ induces a morphism of vector bundles
\[
\varphi_{p,i} : \mathcal{S}_p \longrightarrow
 \bigl(C_{p-1} \otimes \mathcal{O}_{\operatorname{Gr}(\vec s)}\bigr)
/\,\mathcal{S}_{p-1}.
\]
Then the incidence variety $\mathbf{Gr}^{\mathrm{simp}}(\vec s)$ is the
scheme-theoretic intersection
\[
\mathbf{Gr}^{\mathrm{simp}}(\vec s)
 = \bigcap_{p,i} \{\,\varphi_{p,i} = 0\,\},
\]
i.e.\ it is the intersection of the rank-$0$ determinantal loci of the
bundle maps $\varphi_{p,i}$. In particular, $\mathbf{Gr}^{\mathrm{simp}}(\vec s)$
is a determinantal subvariety of $\operatorname{Gr}(\vec s)$.
\end{proposition}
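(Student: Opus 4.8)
The plan is to reinterpret each face-compatibility equation cutting out $\mathbf{Gr}^{\mathrm{simp}}(\vec s)$ as the vanishing of a globally defined morphism of vector bundles on $\operatorname{Gr}(\vec s)$, and then to package all of these into a single zeroth determinantal locus. Recall from Proposition~\ref{prop:segre-linear} and the discussion preceding it that a point of $\operatorname{Gr}(\vec s)$ is a tuple $(K_p)_{0\le p\le n}$ with $K_p\subseteq C_p$ and $\dim K_p=K'_p(\vec s)$, and that $\mathbf{Gr}^{\mathrm{simp}}(\vec s)$ is the closed subscheme cut out, in the affine charts of the Grassmannian factors, by the conditions $d_i(K_p)\subseteq K_{p-1}$ for all admissible $(p,i)$. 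On $\operatorname{Gr}(\vec s)$ we have the tautological inclusion $\iota_p:\mathcal{S}_p\hookrightarrow C_p\otimes\mathcal{O}_{\operatorname{Gr}(\vec s)}$ and the quotient $\pi_{p-1}:C_{p-1}\otimes\mathcal{O}_{\operatorname{Gr}(\vec s)}\twoheadrightarrow\bigl(C_{p-1}\otimes\mathcal{O}_{\operatorname{Gr}(\vec s)}\bigr)/\mathcal{S}_{p-1}$, and I would define
\[
\varphi_{p,i}:=\pi_{p-1}\circ(d_i\otimes\mathrm{id})\circ\iota_p:\ \mathcal{S}_p\longrightarrow\bigl(C_{p-1}\otimes\mathcal{O}_{\operatorname{Gr}(\vec s)}\bigr)/\mathcal{S}_{p-1},
\]
a composite of bundle maps, hence $\mathcal{O}$-linear. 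Its fiber at $x=(K_q)_q$ is the linear map $K_p\hookrightarrow C_p\xrightarrow{d_i}C_{p-1}\twoheadrightarrow C_{p-1}/K_{p-1}$, which is zero precisely when $d_i(K_p)\subseteq K_{p-1}$; so set-theoretically $\{\varphi_{p,i}=0\}$ is the locus satisfying the $(p,i)$-th compatibility condition. (If one also imposes closure under codegeneracies, the conditions $s_r(K_p)\subseteq K_{p+1}$ are handled by the identical construction with $\mathcal{S}_{p+1}\subseteq C_{p+1}\otimes\mathcal{O}_{\operatorname{Gr}(\vec s)}$ in place of $\mathcal{S}_{p-1}$, and the conclusion below is unchanged.)

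Next I would upgrade this to a scheme-theoretic identity. Working in a standard affine chart of each Grassmannian factor --- representing $K_p$ by a matrix in row-reduced form and taking the complementary coordinate subspace of $C_{p-1}$ as a local splitting $C_{p-1}\otimes\mathcal{O}_{\operatorname{Gr}(\vec s)}=\mathcal{S}_{p-1}\oplus\mathcal{Q}_{p-1}$ --- the map $\varphi_{p,i}$ is represented by a matrix of regular functions whose entries are, under the usual identifications, exactly the polynomial functions whose simultaneous vanishing expresses $d_i(K_p)\subseteq K_{p-1}$; these are the very equations that Proposition~\ref{prop:segre-linear} shows become linear in the Segre--Pl\"ucker coordinates. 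Hence the ideal sheaf generated by the entries of $\varphi_{p,i}$ agrees with the ideal sheaf defining the $(p,i)$-th incidence condition on $\operatorname{Gr}(\vec s)$, i.e.\ $\{\varphi_{p,i}=0\}$ is precisely the zeroth determinantal subscheme of $\varphi_{p,i}$ (the locus where all $1\times1$ minors, equivalently all entries, vanish). Taking the scheme-theoretic intersection over all admissible $(p,i)$ then gives $\mathbf{Gr}^{\mathrm{simp}}(\vec s)=\bigcap_{p,i}\{\varphi_{p,i}=0\}$.

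For the final clause I would assemble the $\varphi_{p,i}$ into a single bundle map $\Phi:=\bigoplus_{p,i}\varphi_{p,i}:\bigoplus_{p,i}\mathcal{S}_p\to\bigoplus_{p,i}\bigl(C_{p-1}\otimes\mathcal{O}_{\operatorname{Gr}(\vec s)}\bigr)/\mathcal{S}_{p-1}$. In any local trivialization its matrix is block diagonal with blocks the matrices of the $\varphi_{p,i}$, so the ideal of its entries is the sum of the ideals of entries of the $\varphi_{p,i}$, whence the zeroth determinantal locus of $\Phi$ is exactly $\bigcap_{p,i}\{\varphi_{p,i}=0\}=\mathbf{Gr}^{\mathrm{simp}}(\vec s)$. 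Therefore $\mathbf{Gr}^{\mathrm{simp}}(\vec s)$ is the rank-$0$ degeneracy locus of the single vector bundle morphism $\Phi$, i.e.\ a determinantal subvariety of $\operatorname{Gr}(\vec s)$.

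The main obstacle is the scheme-theoretic matching in the second step: one must verify, in the explicit affine charts of the product of Grassmannians, that the entries of the matrix representing $\varphi_{p,i}$ generate exactly the ideal defining the incidence condition $d_i(K_p)\subseteq K_{p-1}$ --- with no spurious extra vanishing and no loss of nilpotent structure --- which requires careful bookkeeping of the local frames of $\mathcal{S}_p$, the chosen complements to $\mathcal{S}_{p-1}$, and their compatibility with the Segre--Pl\"ucker linearization of Proposition~\ref{prop:segre-linear}. The remaining ingredients --- the construction of $\varphi_{p,i}$, its fiberwise interpretation, and the assembly into $\Phi$ --- are the formal calculus of tautological subbundles and determinantal/Fitting loci.
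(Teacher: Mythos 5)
Your proposal is correct and follows essentially the same route as the paper's proof: define $\varphi_{p,i}$ as the composite $\pi_{p-1}\circ(d_i\otimes\mathrm{id})\circ\iota_p$ of tautological bundle maps, compute its fiber at a point $(K_q)_q$ to identify $\{\varphi_{p,i}=0\}$ with the locus $d_i(K_p)\subseteq K_{p-1}$, and pass to local trivializations to see that the vanishing of all entries (i.e.\ $1\times 1$ minors) cuts out this locus as a determinantal subscheme. Your extra step of assembling the $\varphi_{p,i}$ into a single bundle morphism $\Phi=\bigoplus\varphi_{p,i}$ so that $\mathbf{Gr}^{\mathrm{simp}}(\vec s)$ is the rank-$0$ locus of one map is a mild strengthening of the paper's phrasing but not a different argument, and your worry about the scheme-theoretic matching is largely moot since the paper gives no independent scheme structure on $\mathbf{Gr}^{\mathrm{simp}}(\vec s)$ beyond these very equations.
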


\begin{proof}
A point of $\operatorname{Gr}(\vec s)$ is a tuple of subspaces
$(L_p)_p$ with $L_p \subset C_p$ and $\dim L_p = K'_p(\vec s)$ whenever
$K'_p(\vec s)>0$. By construction of the tautological subbundle,
the fiber $(\mathcal{S}_p)_x$ over a point $x$ corresponding to
$(L_p)_p$ is exactly $L_p$, and similarly $(\mathcal{S}_{p-1})_x = L_{p-1}$.

The fixed linear map $d_i : C_p \to C_{p-1}$ induces a morphism of
vector bundles
\[
d_i \otimes \operatorname{id} : C_p \otimes \mathcal{O}_{\operatorname{Gr}(\vec s)}
 \longrightarrow C_{p-1} \otimes \mathcal{O}_{\operatorname{Gr}(\vec s)}.
\]
Composing with the quotient map
$C_{p-1} \otimes \mathcal{O}_{\operatorname{Gr}(\vec s)}
 \twoheadrightarrow (C_{p-1} \otimes \mathcal{O}_{\operatorname{Gr}(\vec s)})/\mathcal{S}_{p-1}$
and restricting to $\mathcal{S}_p \subset C_p \otimes \mathcal{O}$ gives
a bundle map
\[
\varphi_{p,i} : \mathcal{S}_p \to
 (C_{p-1} \otimes \mathcal{O}_{\operatorname{Gr}(\vec s)})/\mathcal{S}_{p-1}.
\]

At a point $x=(L_p)_p$, the fiber map
$(\varphi_{p,i})_x : L_p \to C_{p-1}/L_{p-1}$ is the composition
\[
L_p \hookrightarrow C_p \xrightarrow{d_i} C_{p-1} \twoheadrightarrow C_{p-1}/L_{p-1}.
\]
Thus $(\varphi_{p,i})_x = 0$ if and only if $d_i(L_p) \subseteq L_{p-1}$.
By definition, $\mathbf{Gr}^{\mathrm{simp}}(\vec s)$ is the locus of tuples satisfying
these inclusions for all $(p,i)$.

In local trivializations, $\varphi_{p,i}$ is represented by a matrix of regular functions,
and the condition $(\varphi_{p,i})_x=0$ corresponds to the vanishing of all entries
(all $1\times 1$ minors).
Globally, this condition defines the zero locus of the section $\varphi_{p,i}$ of the bundle
$\mathcal{H}om(\mathcal{S}_p, \mathcal{Q}_{p-1})$, where we define the quotient bundle
$\mathcal{Q}_{p-1} := (C_{p-1} \otimes \mathcal{O}_{\operatorname{Gr}(\vec s)})/\mathcal{S}_{p-1}$.
In terms of homogeneous coordinates, this condition is bilinear in the Plücker coordinates
of the factors (and thus linear in the Segre coordinates), confirming that
$\mathbf{Gr}^{\mathrm{simp}}(\vec s)$ is a determinantal subvariety.
\end{proof}

\subsection{Homology of Generated Subobjects}

Fix $T\in X_n(\vec s;K)$ and write $C_\bullet := K[\sDelta^n]$ and
$K_\bullet := K(T)_\bullet = \ker(f_T) \subseteq C_\bullet$.
We analyze $H_\bullet(\langle T \rangle_\bullet)$ using the short exact sequence
\[
0 \longrightarrow K_\bullet \longrightarrow C_\bullet \longrightarrow
\langle T \rangle_\bullet \longrightarrow 0.
\]
Recall that $H_p(C_\bullet)=0$ for $p>0$ and $H_0(C_\bullet)\cong K$.

\begin{proposition}[Homology of the generated subobject]\label{prop:homology-strata}
Let $B_p(C_\bullet)$ and $B_p(K_\bullet)$ denote the boundary subspaces in
$C_\bullet$ and $K_\bullet$, respectively. Then:
\begin{enumerate}
    \item For $p\ge 2$, there is a natural isomorphism
          \[
          H_p(\langle T \rangle_\bullet) \cong H_{p-1}(K_\bullet).
          \]
    \item $H_1(\langle T \rangle_\bullet) \cong
          \bigl(K_0 \cap B_0(C_\bullet)\bigr) / B_0(K_\bullet)$.
    \item $H_0(\langle T \rangle_\bullet) \cong
          C_0 / \bigl(K_0 + B_0(C_\bullet)\bigr)$.
\end{enumerate}
\end{proposition}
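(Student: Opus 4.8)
The plan is to apply the long exact sequence in homology to the short exact sequence of chain complexes
\[
0 \longrightarrow K_\bullet \longrightarrow C_\bullet \longrightarrow \langle T \rangle_\bullet \longrightarrow 0,
\]
and to extract the three statements by feeding in the known homology of $C_\bullet=K[\Delta^n]$: namely $H_p(C_\bullet)=0$ for $p\ge 1$ and $H_0(C_\bullet)\cong K$ (which also follows from Dold--Kan, Theorem~\ref{thm:EZ}, together with the contractibility of $\Delta^n$). The long exact sequence
\[
\cdots \to H_p(K_\bullet)\to H_p(C_\bullet)\to H_p(\langle T\rangle_\bullet)\xrightarrow{\ \partial\ } H_{p-1}(K_\bullet)\to H_{p-1}(C_\bullet)\to\cdots
\]
is natural in the short exact sequence, so the isomorphisms produced below are canonical; this yields the naturality asserted in (1).

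For $p\ge 2$ both neighbouring terms $H_p(C_\bullet)$ and $H_{p-1}(C_\bullet)$ vanish, so the connecting map $\partial\colon H_p(\langle T\rangle_\bullet)\to H_{p-1}(K_\bullet)$ is forced to be an isomorphism, giving (1). For the remaining statements I would isolate the tail
\[
0=H_1(C_\bullet)\to H_1(\langle T\rangle_\bullet)\xrightarrow{\ \partial\ } H_0(K_\bullet)\xrightarrow{\ \jmath_*\ } H_0(C_\bullet)\to H_0(\langle T\rangle_\bullet)\to 0,
\]
using that in degree $0$ every chain is a cycle, so $H_0(K_\bullet)=K_0/B_0(K_\bullet)$ and $H_0(C_\bullet)=C_0/B_0(C_\bullet)$, and that $\jmath_*$ is the map induced by the inclusion $K_0\hookrightarrow C_0$.

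Exactness at $H_1(\langle T\rangle_\bullet)$ and at $H_0(K_\bullet)$ then gives $H_1(\langle T\rangle_\bullet)\cong\ker\jmath_*$; a class $[x]\in K_0/B_0(K_\bullet)$ lies in $\ker\jmath_*$ precisely when $x\in B_0(C_\bullet)$, and since $B_0(K_\bullet)\subseteq K_0\cap B_0(C_\bullet)$ this kernel is exactly $(K_0\cap B_0(C_\bullet))/B_0(K_\bullet)$, which is (2). Exactness at $H_0(C_\bullet)$ and at $H_0(\langle T\rangle_\bullet)$ identifies $H_0(\langle T\rangle_\bullet)$ with $\operatorname{coker}\jmath_*\cong\bigl(C_0/B_0(C_\bullet)\bigr)\big/\bigl((K_0+B_0(C_\bullet))/B_0(C_\bullet)\bigr)\cong C_0/(K_0+B_0(C_\bullet))$ by the third isomorphism theorem, which is (3). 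The argument is pure homological bookkeeping; the only place where care is needed — and the closest thing to an obstacle — is pinning down the degree-$0$ conventions ($Z_0$ is all of $C_0$, and $B_0=\operatorname{im}\partial_1$) and verifying that the connecting homomorphism and the inclusion-induced map $\jmath_*$ are precisely the ones written above, so that their kernel and cokernel come out in the stated form.
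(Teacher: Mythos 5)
Your proposal is correct and follows essentially the same route as the paper: apply the long exact sequence in homology to the short exact sequence $0\to K_\bullet\to C_\bullet\to\langle T\rangle_\bullet\to 0$, use $H_p(C_\bullet)=0$ for $p\ge 1$ to get (1), and then read off (2) and (3) from the five-term tail in degrees $1$ and $0$. The extra bookkeeping you supply (identifying $\ker\jmath_*$ and $\operatorname{coker}\jmath_*$ via the third isomorphism theorem) is exactly what the paper leaves as "the standard identification of $H_0$ as cycles modulo boundaries."
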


\begin{proof}
Applying homology to
\[
0 \to K_\bullet \to C_\bullet \to \langle T\rangle_\bullet \to 0
\]
gives a long exact sequence. For $p\ge 2$ we have
$H_p(C_\bullet)=H_{p-1}(C_\bullet)=0$, so the connecting homomorphism
induces an isomorphism $H_p(\langle T\rangle_\bullet)\cong H_{p-1}(K_\bullet)$,
which proves (1).

For $p=1$ and $p=0$, the long exact sequence reads
\[
0 \to H_1(\langle T\rangle_\bullet) \to H_0(K_\bullet) \to H_0(C_\bullet)
\to H_0(\langle T\rangle_\bullet) \to 0,
\]
and the standard identification of $H_0$ as cycles modulo boundaries in degree
$0$ yields the descriptions in (2) and (3).
\end{proof}

\begin{corollary}[Generic connectivity]\label{cor:generic_connectivity}
If $K'_0(\vec s)>0$, then for a generic tensor $T \in \mathcal{U}$,
$H_0(\langle T \rangle_\bullet)=0$.
\end{corollary}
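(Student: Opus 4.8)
The plan is to combine the description $H_0(\langle T\rangle_\bullet)\cong C_0/\bigl(K_0+B_0(C_\bullet)\bigr)$ from Proposition~\ref{prop:homology-strata}(3) with the dominance of the degree-$0$ moduli projection $\Psi_0$ from Theorem~\ref{thm:index_collision_injectivity}(2). The whole argument reduces to the observation that $B_0(C_\bullet)$ is a \emph{hyperplane} in $C_0$, so $H_0(\langle T\rangle_\bullet)=0$ is equivalent to the open condition that the kernel subspace $K_0(T)$ is not contained in that fixed hyperplane.

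First I would record the structure of $B_0(C_\bullet)$. Since $C_\bullet=K[\Delta^n]$ is the free simplicial $K$-module on the contractible simplicial set $\Delta^n$, we have $H_0(C_\bullet)\cong K$ and $H_p(C_\bullet)=0$ for $p>0$. Concretely, $C_0$ is free on the $n+1$ vertex maps $e_0,\dots,e_n\in\Delta([0],[n])$, and $B_0(C_\bullet)=\operatorname{im}(\partial_1)$ is spanned by the differences $e_{\beta(1)}-e_{\beta(0)}$ over $\beta\in\Delta([1],[n])$; hence $B_0(C_\bullet)$ equals the augmentation-kernel subspace $\{\sum_i c_i e_i : \sum_i c_i=0\}\subset C_0$, which has codimension $1$. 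Consequently $C_0/\bigl(K_0+B_0(C_\bullet)\bigr)=0$ if and only if $K_0+B_0(C_\bullet)=C_0$, i.e.\ if and only if $K_0\not\subseteq B_0(C_\bullet)$. By Proposition~\ref{prop:homology-strata}(3) this gives the equivalence $H_0(\langle T\rangle_\bullet)=0\iff K_0(T)\not\subseteq B_0(C_\bullet)$.

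Second I would translate ``$K_0\subseteq B_0(C_\bullet)$'' into a closed condition on the point $\Psi_0(T)=K_0(T)\in\operatorname{Gr}(K'_0(\vec s),C_0)$. The $K'_0(\vec s)$-dimensional subspaces $W$ of $C_0$ with $W\subseteq B_0(C_\bullet)$ form a sub-Grassmannian $\operatorname{Gr}\bigl(K'_0(\vec s),B_0(C_\bullet)\bigr)$, a proper Zariski-closed subvariety $Z\subsetneq\operatorname{Gr}\bigl(K'_0(\vec s),C_0\bigr)$ (it is nonempty and of strictly smaller dimension, since $0<K'_0(\vec s)\le n=\dim B_0(C_\bullet)<n+1=\dim C_0$; note $K'_0(\vec s)\le n$ because $R'_0\ge 1$). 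On the generic locus $\mathcal U$ the rank of $M_{T,0}$ is constant equal to $R'_0(\vec s)$, so $\Psi_0$ is a morphism there, and $\Psi_0^{-1}(Z)$ is Zariski-closed in $\mathcal U$.

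Finally, dominance closes the argument. By Theorem~\ref{thm:index_collision_injectivity}(2) the map $\Psi_0:\mathcal U\to\operatorname{Gr}\bigl(K'_0(\vec s),C_0\bigr)$ is dominant; since $\mathcal U$ is irreducible (an open subset of the affine space $X_n(\vec s;K)$) and $Z$ is a proper closed subvariety of the target, $\Psi_0^{-1}(Z)$ is a proper closed subvariety of $\mathcal U$ --- otherwise $\Psi_0(\mathcal U)\subseteq Z$, contradicting density of $\Psi_0(\mathcal U)$. Hence $\mathcal U':=\mathcal U\setminus\Psi_0^{-1}(Z)$ is a non-empty Zariski-open subset of $\mathcal U$, and for every $T\in\mathcal U'$ we have $K_0(T)\not\subseteq B_0(C_\bullet)$, so $H_0(\langle T\rangle_\bullet)=0$ by the equivalence of the second paragraph; as $K$ is infinite, $\mathcal U'$ is the desired generic locus. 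The only mild obstacle is bookkeeping: confirming that $B_0(C_\bullet)$ is genuinely a proper (codimension-one) subspace and that the resulting $Z$ is a proper subvariety of the Grassmannian; both follow from the elementary homology of $K[\Delta^n]$, after which the statement is a direct consequence of Proposition~\ref{prop:homology-strata} and Theorem~\ref{thm:index_collision_injectivity}.
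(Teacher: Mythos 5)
Your proof is correct and follows the same strategy as the paper: identify $B_0(C_\bullet)$ as the codimension-one augmentation hyperplane so that $H_0(\langle T\rangle_\bullet)=0$ iff $K_0(T)\not\subseteq B_0(C_\bullet)$, note that the containment locus is a proper closed sub-Grassmannian, and use dominance of $\Psi_0$ to conclude that its preimage in $\mathcal U$ is proper closed. You have simply spelled out the bookkeeping (codimension count, the bound $K'_0\le n$, irreducibility of $\mathcal U$) that the paper leaves implicit.
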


\begin{proof}
$B_0(C_\bullet)$ is a hyperplane in $C_0$. By
Proposition~\ref{prop:homology-strata}, we have
$H_0(\langle T \rangle_\bullet)\ne 0$ if and only if
$K_0(T) \subseteq B_0(C_\bullet)$. This condition cuts out a proper closed
subvariety of $\operatorname{Gr}(K'_0(\vec s),S_0)$. Since $\Psi_0$ is
dominant (Theorem~\ref{thm:index_collision_injectivity}), the generic
kernel $K_0(T)$ avoids this locus.
\end{proof}

\subsection{\texorpdfstring{Example: Shape $(3,3)$}{Example: Shape (3,3)}}
\label{ex:shape_33}

Let $\vec{s}=(3,3)$, so $k=2$ and $n=2$. Then $X_2$ consists of $3\times 3$
matrices. We write $T=(v_{ij})_{0\le i,j\le 2}$. The dimensions are
\[
S=(3,6,10),\qquad R=(1,4,9).
\]

We fix the following bases:
\begin{itemize}
\item For $X_2$, the basis $I_2 = [2]^2$ in lexicographic order:
\[
(0,0),(0,1),(0,2),(1,0),(1,1),(1,2),(2,0),(2,1),(2,2).
\]
\item For $K[\sDelta^2]_1$, the basis of monotone maps
      $[1]\to[2]$ ordered as
\[
(0,0),\ (0,1),\ (0,2),\ (1,1),\ (1,2),\ (2,2).
\]
\item For $K[\sDelta^2]_2$, the basis of monotone maps $[2]\to[2]$
      ordered as the ten non-decreasing triples
      $(a_0,a_1,a_2)$ with $0\le a_0\le a_1\le a_2\le 2$:
\[
(0,0,0), (0,0,1), (0,0,2), (0,1,1), (0,1,2),
(0,2,2), (1,1,1), (1,1,2), (1,2,2), (2,2,2).
\]
\end{itemize}

\paragraph{Degree $p=0$.}
Here $R_0=1$ and $S_0=3$. The realization matrix is
\[
M_{T,0} = \begin{pmatrix} v_{00} & v_{11} & v_{22} \end{pmatrix}.
\]
For generic $T$, its rank is $1$, so $K'_0 = 2$.
The kernel is the $2$-dimensional subspace of $K^3$ of solutions to
\[
v_{00}x_0 + v_{11}x_1 + v_{22}x_2 = 0.
\]
On the open chart $v_{00}\neq 0$, it is spanned by
\[
(-v_{11}/v_{00},\, 1,\,0),\qquad
(-v_{22}/v_{00},\, 0,\,1).
\]
Thus $K_0(T)$ depends only on the projective class
$[v_{00}:v_{11}:v_{22}] \in \mathbb{P}^2$.

\paragraph{Degree $p=1$.}
Here $R_1=4$ and $S_1=6$. With the bases above, the realization matrix is
\[
M_{T,1} =
\begin{pmatrix}
v_{00} & v_{00} & v_{00} & v_{11} & v_{11} & v_{22} \\
v_{00} & v_{01} & v_{02} & v_{11} & v_{12} & v_{22} \\
v_{00} & v_{10} & v_{20} & v_{11} & v_{21} & v_{22} \\
v_{00} & v_{11} & v_{22} & v_{11} & v_{22} & v_{22}
\end{pmatrix}.
\]

Two kernel vectors can be written in terms of the diagonal
entries. Set
\[
c^{(1)}_1 := \bigl(-v_{11}/v_{00},\, 0,\,0,\, 1,\, 0,\, 0\bigr)^T,
\qquad
c^{(1)}_2 := \bigl(-v_{22}/v_{00},\, 0,\,0,\, 0,\, 0,\, 1\bigr)^T.
\]
A direct calculation shows
\[
M_{T,1}\,c^{(1)}_1 = 0,
\qquad
M_{T,1}\,c^{(1)}_2 = 0
\]
as identities in the polynomial ring $K[v_{ij}]$ (on the open set
$v_{00}\neq 0$). Hence $K'_1 \ge 2$.

On the other hand, taking e.g.
\[
T =
\begin{pmatrix}
1 & 1 & 1 \\
2 & 2 & 1 \\
2 & 2 & 3
\end{pmatrix},
\]
one obtains
\[
M_{T,1} =
\begin{pmatrix}
1 & 1 & 1 & 2 & 2 & 3 \\
1 & 1 & 1 & 2 & 1 & 3 \\
1 & 2 & 2 & 2 & 2 & 3 \\
1 & 2 & 3 & 2 & 3 & 3
\end{pmatrix},
\]
which has rank $4$ (one checks that some $4\times 4$ minor is non-zero).
Thus the generic rank is $R'_1=4$, and therefore $K'_1 = S_1 - R'_1 = 2$.

In particular, for generic $T$, the kernel $K_1(T) = \ker f_{T,1}$ is
spanned by $c^{(1)}_1$ and $c^{(1)}_2$, so it depends only on
$(v_{00},v_{11},v_{22})$.

\paragraph{Degree $p=2$.}
Here $R_2=9$ and $S_2=10$. With the bases chosen above, the symbolic
realization matrix $M_{T,2}$ is
\[
M_{T,2} =
\begin{pmatrix}
v_{00} & v_{00} & v_{00} & v_{00} & v_{00} & v_{00} & v_{11} & v_{11} & v_{11} & v_{22} \\
v_{00} & v_{00} & v_{00} & v_{01} & v_{01} & v_{02} & v_{11} & v_{11} & v_{12} & v_{22} \\
v_{00} & v_{01} & v_{02} & v_{01} & v_{02} & v_{02} & v_{11} & v_{12} & v_{12} & v_{22} \\
v_{00} & v_{00} & v_{00} & v_{10} & v_{10} & v_{20} & v_{11} & v_{11} & v_{21} & v_{22} \\
v_{00} & v_{00} & v_{00} & v_{11} & v_{11} & v_{22} & v_{11} & v_{11} & v_{22} & v_{22} \\
v_{00} & v_{01} & v_{02} & v_{11} & v_{12} & v_{22} & v_{11} & v_{12} & v_{22} & v_{22} \\
v_{00} & v_{10} & v_{20} & v_{10} & v_{20} & v_{20} & v_{11} & v_{21} & v_{21} & v_{22} \\
v_{00} & v_{10} & v_{20} & v_{11} & v_{21} & v_{22} & v_{11} & v_{21} & v_{22} & v_{22} \\
v_{00} & v_{11} & v_{22} & v_{11} & v_{22} & v_{22} & v_{11} & v_{22} & v_{22} & v_{22}
\end{pmatrix}.
\]

Again there are two explicit kernel vectors depending only on the
diagonal. Set
\[
c^{(2)}_1 :=
\bigl(-v_{11}/v_{00},\, 0,\,0,\,0,\,0,\,0,\, 1,\,0,\,0,\,0\bigr)^T,
\qquad
c^{(2)}_2 :=
\bigl(-v_{22}/v_{00},\, 0,\,0,\,0,\,0,\,0,\, 0,\,0,\,0,\,1\bigr)^T.
\]
A direct substitution into the matrix above shows
\[
M_{T,2}\,c^{(2)}_1 = 0,
\qquad
M_{T,2}\,c^{(2)}_2 = 0
\]
as polynomial identities (again on the open chart $v_{00}\neq 0$).
Thus $K'_2 \ge 2$ and $R'_2 \le 10-2 = 8$.

To see that the generic rank is exactly $8$, we exhibit a single tensor
for which $M_{T,2}$ has rank $8$. Take for example
\[
T =
\begin{pmatrix}
1 & 1 & 1 \\
2 & 2 & 1 \\
2 & 2 & 3
\end{pmatrix}.
\]
Substituting these values into $M_{T,2}$ gives a $9\times 10$ numerical
matrix with rank $8$. Since rank is upper semicontinuous in families,
this shows that the generic rank is $R'_2=8$, and therefore
$K'_2 = S_2 - R'_2 = 2$.

Moreover, the two kernel generators $c^{(2)}_1,c^{(2)}_2$ depend only on
$v_{00},v_{11},v_{22}$, not on the off-diagonal entries.

\medskip

Collecting the three degrees, we have:

\begin{proposition}[Generic kernel dimensions for shape $(3,3)$]
For $\vec{s}=(3,3)$, the generic kernel dimensions are
\[
K'(\vec{s}) = (K'_0,K'_1,K'_2) = (2,2,2).
\]
On the open chart $v_{00}\neq 0$, the kernels $K_p(T)$ are generated by
vectors whose coordinates are rational functions of $v_{11}/v_{00}$ and
$v_{22}/v_{00}$ only.
\end{proposition}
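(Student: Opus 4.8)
The proof assembles the three degree-wise computations carried out above for $p=0,1,2$. In each degree the plan is to pin down the generic kernel dimension $K'_p(\vec s)=S_p-R'_p(\vec s)$ by a two-sided estimate: a lower bound $K'_p\ge 2$ from two explicit vectors in $\ker f_{T,p}$ that are valid on the chart $v_{00}\neq 0$, and a matching upper bound $K'_p\le 2$ (i.e.\ $R'_p\ge S_p-2$) from one numerical tensor $T$ at which $M_{T,p}$ attains rank $S_p-2$, using lower semicontinuity of matrix rank in an algebraic family (the locus where a fixed $r\times r$ minor is nonzero is Zariski open, so the generic rank is at least any value observed at a single point). For $p=0$ this is immediate: $M_{T,0}=(\,v_{00}\ v_{11}\ v_{22}\,)$ has rank $1$ whenever $v_{00}\neq 0$, so $R'_0=1$ and $K'_0=3-1=2$, with kernel spanned on $v_{00}\neq 0$ by $(-v_{11}/v_{00},1,0)$ and $(-v_{22}/v_{00},0,1)$.

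For $p=1$ and $p=2$ the upper bound $K'_p\ge 2$ is essentially free. A column of $M_{T,p}$ indexed by a constant morphism $\alpha\equiv c:[p]\to[n]$ is the constant vector $v_{cc}\cdot\mathbf 1$, since $\mathcal I_p(m,\alpha)=(c,\dots,c)$ independently of $m$; hence the vectors $c^{(p)}_1:=-\tfrac{v_{11}}{v_{00}}\,e_{\mathrm{const}_0}+e_{\mathrm{const}_1}$ and $c^{(p)}_2:=-\tfrac{v_{22}}{v_{00}}\,e_{\mathrm{const}_0}+e_{\mathrm{const}_2}$ lie in $\ker f_{T,p}$ on $\{v_{00}\neq 0\}$ (these are exactly the vectors $c^{(p)}_i$ written coordinatewise in the examples) and are linearly independent, giving $R'_p\le S_p-2$. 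For the reverse inequality I would specialize to $T=\begin{pmatrix}1&1&1\\2&2&1\\2&2&3\end{pmatrix}$: writing out $M_{T,1}$ (a $4\times 6$ matrix) and exhibiting a nonzero $4\times 4$ minor shows $R'_1\ge 4$, and writing out $M_{T,2}$ (a $9\times 10$ matrix) and exhibiting a nonzero $8\times 8$ minor shows $R'_2\ge 8$. Combining the two estimates gives $R'_1=4$ and $R'_2=8$, hence $K'_1=K'_2=2$; together with $K'_0=2$ this proves the first assertion, and in particular $K'_p>0$ for $p=0,1,2$, so all three Grassmann factors occur.

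The rational-dependence clause then follows formally. For generic $T$ on the chart $v_{00}\neq 0$, the kernel $K_p(T)=\ker f_{T,p}$ has dimension exactly $K'_p=2$ and contains $c^{(p)}_1,c^{(p)}_2$, which are linearly independent (they differ in the coordinates indexed by the constant-$1$ and constant-$2$ morphisms $[p]\to[n]$). Hence $K_p(T)=\operatorname{span}_K\{c^{(p)}_1,c^{(p)}_2\}$, and by construction every coordinate of each generator is a polynomial (in fact affine-linear) in $v_{11}/v_{00}$ and $v_{22}/v_{00}$, with no dependence on the off-diagonal entries $v_{ij}$ ($i\neq j$).

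The main obstacle is purely one of bookkeeping in degree $2$: one must correctly assemble the $9\times 10$ matrix $M_{T,2}$ for the chosen numerical $T$, which amounts to tabulating the collision map $\mathcal I_2(m,\alpha)=I_\bullet(\alpha)(m)$ on all $90$ pairs $(m,\alpha)\in I_2\times\Delta([2],[2])$, and then locating a nonvanishing $8\times 8$ minor. This is a finite, unenlightening computation; conceptually the whole argument reduces to linear algebra over $K$ and over $K(v_{ij})$ plus the one-line semicontinuity remark. One point worth flagging: the vectors $c^{(p)}_1,c^{(p)}_2$ have been exhibited as kernel vectors only on $\{v_{00}\neq 0\}$, so the spanning statement is to be read (as in the earlier paragraphs of this section) for generic $T$ on that chart, where $\dim\ker f_{T,p}=K'_p$ is in force.
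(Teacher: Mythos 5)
Your proposal is correct and follows the paper's own argument exactly: in each degree you establish $K'_p=2$ by a two-sided estimate, pairing the two explicit kernel vectors $c^{(p)}_1,c^{(p)}_2$ (a lower bound on $K'_p$) with a rank evaluation at a single numerical tensor and Zariski openness of the maximal-rank locus (an upper bound). The one conceptual addition you make---that a column of $M_{T,p}$ indexed by the constant morphism $\alpha\equiv c$ is the constant vector $v_{cc}\mathbf{1}$ because $\mathcal I_p(m,\alpha)=(c,\dots,c)$ for all $m$, so the vectors $-\tfrac{v_{11}}{v_{00}}e_{\mathrm{const}_0}+e_{\mathrm{const}_1}$ and $-\tfrac{v_{22}}{v_{00}}e_{\mathrm{const}_0}+e_{\mathrm{const}_2}$ are automatically in $\ker f_{T,p}$---is a nice gloss on why these kernel vectors work, which the paper instead verifies by direct substitution.
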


\begin{corollary}[Moduli space for shape $(3,3)$]
For $\vec{s}=(3,3)$, the moduli space $\mathcal{M}(\vec{s})$ is a surface
of dimension $2$, birational to $\mathbb{P}^2$.
\end{corollary}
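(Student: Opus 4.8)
The idea is that, for shape $(3,3)$, the moduli map $\Psi$ sees only the \emph{diagonal class} $[v_{00}:v_{11}:v_{22}]\in\mathbb{P}^2$ of $T$, and that this dependence is, generically, an isomorphism onto $\mathcal M(\vec s)$.

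\emph{Step 1: $\Psi$ factors through the diagonal class.} By the computations of this section, $K'(\vec s)=(2,2,2)$, and on the open chart $v_{00}\neq 0$ the kernel $K_p(T)=\ker f_{T,p}$ is spanned, for each $p\in\{0,1,2\}$, by the two explicit vectors $c^{(p)}_1,c^{(p)}_2$ whose coordinates are polynomial in $\lambda_1:=v_{11}/v_{00}$ and $\lambda_2:=v_{22}/v_{00}$ only. Since $\dim K_p(T)=S_p-R'_p(\vec s)=K'_p(\vec s)=2$ for $T\in\mathcal U$, these two vectors already span $K_p(T)$. Hence on the dense open set $\mathcal U':=\mathcal U\cap\{v_{00}\neq0\}$ the moduli map factors as $\Psi|_{\mathcal U'}=\bar\Psi\circ q$, where $q:\mathcal U'\to\mathbb A^2$, $T\mapsto(\lambda_1,\lambda_2)$, and $\bar\Psi:\mathbb A^2\to\operatorname{Gr}(\vec s)$ is the morphism $(\lambda_1,\lambda_2)\mapsto\bigl(\langle c^{(p)}_1,c^{(p)}_2\rangle\bigr)_{p=0,1,2}$ (the two generators have their nonzero coordinates in fixed, distinct positions, so the spans have constant dimension $2$ and $\bar\Psi$ is indeed regular on all of $\mathbb A^2$). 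Because $q$ is surjective from $\{v_{00}\neq0\}$ onto $\mathbb A^2$ and $\mathcal U'$ is dense there, its image $W:=q(\mathcal U')$ contains a dense open $W_0\subseteq\mathbb A^2$; and $\Psi(\mathcal U')=\bar\Psi(W)$ is dense in $\mathcal M(\vec s)=\Psi(\mathcal U)$. In particular $\dim\mathcal M(\vec s)=\dim\bar\Psi(W)\le\dim W\le2$.

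\emph{Step 2: the degree-$0$ projection inverts $\bar\Psi$.} Compose with the projection $\operatorname{pr}_0:\operatorname{Gr}(\vec s)\to\operatorname{Gr}(K'_0(\vec s),S_0)=\operatorname{Gr}(2,3)$. From the degree-$0$ realization matrix $M_{T,0}=(v_{00},v_{11},v_{22})$, the plane $K_0(T)\subset K^3$ is exactly the annihilator of the functional $(v_{00},v_{11},v_{22})$, which under $\operatorname{Gr}(2,3)\cong\check{\mathbb P}^2$ corresponds to $[v_{00}:v_{11}:v_{22}]$, i.e.\ to $[1:\lambda_1:\lambda_2]$ in the coordinates of Step 1. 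Thus $\operatorname{pr}_0\circ\bar\Psi:\mathbb A^2\to\check{\mathbb P}^2$ is the standard chart embedding $(\lambda_1,\lambda_2)\mapsto[1:\lambda_1:\lambda_2]$, an open immersion with regular inverse $\rho:[\,\xi_0:\xi_1:\xi_2\,]\mapsto(\xi_1/\xi_0,\xi_2/\xi_0)$ on $\{\xi_0\neq0\}$. Consequently $\bar\Psi$ is injective on $W$, and the rational maps $\operatorname{pr}_0:\mathcal M(\vec s)\dashrightarrow\operatorname{Gr}(2,3)\cong\mathbb P^2$ and $\bar\Psi\circ\rho:\mathbb P^2\dashrightarrow\mathcal M(\vec s)$ are mutually inverse on dense opens ($\bar\Psi\circ\rho$ is a rational section of $\operatorname{pr}_0$, and $\rho\circ\operatorname{pr}_0\circ\bar\Psi=\mathrm{id}$ on $W_0$). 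Hence $\mathcal M(\vec s)$ is birational to $\mathbb P^2$; together with the bound of Step 1 this forces $\dim\mathcal M(\vec s)=2$, so $\mathcal M(\vec s)$ is a surface. (Its irreducibility and unirationality are already supplied by Theorem~\ref{thm:incidence-global}, and dominance of $\mathcal M(\vec s)\to\operatorname{Gr}(2,3)$ by Theorem~\ref{thm:index_collision_injectivity}; the present argument upgrades these to a birational equivalence with $\mathbb P^2$.)

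\emph{Where the work lies.} All the computational content---the realization matrices $M_{T,0},M_{T,1},M_{T,2}$, the generic ranks $R'_p(\vec s)$, and the kernel generators $c^{(p)}_i$---has already been produced in this section, so the remaining effort is organizational: one must fix a \emph{single} dense open $\mathcal U'\subseteq X_2(\vec s;K)$ on which simultaneously $v_{00}\neq0$, all three matrices attain their generic rank, and the $c^{(p)}_i$ span the corresponding kernels, and then check that the factorization $\Psi|_{\mathcal U'}=\bar\Psi\circ q$ and the identification $\operatorname{pr}_0\circ\bar\Psi=[1:\lambda_1:\lambda_2]$ are valid there. The one point deserving care is to use the \emph{regular} inverse $\rho$ rather than mere set-theoretic bijectivity of $\operatorname{pr}_0\circ\bar\Psi$: this is what makes the conclusion genuinely birational (not just a bijection of sets) and keeps the argument correct over an arbitrary infinite field $K$, including in positive characteristic.
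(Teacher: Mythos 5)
Your proposal is correct and follows essentially the same route as the paper's proof: factor $\Psi$ through the diagonal class $[v_{00}:v_{11}:v_{22}]$ using the explicit kernel generators $c^{(p)}_i$, then invert via the degree-$0$ projection to $\operatorname{Gr}(2,3)\cong\check{\mathbb{P}}^2$. Your version is somewhat more careful than the paper's terse argument --- in particular, by exhibiting the regular inverse $\rho$ and the mutual-inverse pair of rational maps rather than resting on "$K_1(T)$ and $K_2(T)$ are then uniquely determined" --- which genuinely tightens the birationality claim and keeps it valid over any infinite field.
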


\begin{proof}
The generic kernel vector in degree $0$ is the hyperplane
\[
K_0(T) = \{(x_0,x_1,x_2)\mid v_{00}x_0+v_{11}x_1+v_{22}x_2=0\}
\subset K^3.
\]
Thus the point $K_0(T) \in \operatorname{Gr}(2,3)$ determines the
projective class $[v_{00}:v_{11}:v_{22}] \in \mathbb{P}^2$, and hence
determines the ratios $v_{11}/v_{00}$ and $v_{22}/v_{00}$ on the chart
$v_{00}\neq 0$. By the explicit formulas above, $K_1(T)$ and $K_2(T)$
are then uniquely determined. Therefore, on this chart, the moduli map
$\Psi$ factors through the projection
\[
X_2 \dashrightarrow \mathbb{P}^2,\qquad
T \longmapsto [v_{00}:v_{11}:v_{22}],
\]
and the induced map $\mathbb{P}^2 \dashrightarrow \mathcal{M}(\vec{s})$
is birational onto its image. Hence $\mathcal{M}(\vec{s})$ has dimension
$2$ and is birational to $\mathbb{P}^2$.
\end{proof}

\appendix
\section{Contractibility of the Diagonal Simplicial Module}
\label{app:filtration}

We prove that the diagonal simplicial $A$-module $X_\bullet(\vec s; A)$ is acyclic by constructing an explicit chain contraction.

\subsection{The Chain Complex and Homotopy Operator}
The DSTM $X_p(\vec s; A)$ is the $A$-module of functions $T: I_p \to A$. The index bounds are $M_a(p) = n_a-1-n+p$, where $n = \min_a(n_a)-1$.

\begin{remark}[Index Bounds at $p=0$]\label{rem:index_bounds}
By the definition of $n$, there exists at least one index $a_0$ such that $n_{a_0}-1 = n$. 
For this index, $M_{a_0}(0) = 0$. Consequently, for any index $m \in I_0$, the coordinate $m_{a_0}$ must be 0.
\end{remark}

Let $(X_\bullet, \partial_\bullet)$ be the associated unaugmented chain complex. We set $X_p=0$ for $p<0$, and $\partial_0=0$.

\begin{definition}[Shift-and-Truncate Homotopy $H$]
We define $H_p: X_p \to X_{p+1}$ for $p\ge -1$. Set $H_{-1}:=0$. For $p\ge 0$, $T \in X_p$, and $m \in I_{p+1}$:
\[
H_p(T)(m) := \begin{cases}
0 & \text{if } \exists a: m_a = 0, \\
T(m-\vec{1}) & \text{if } \forall a: m_a > 0.
\end{cases}
\]
\end{definition}

\begin{lemma}\label{lem:homotopy_identities}
The operator $H$ satisfies the following identities:
\begin{enumerate}
    \item For all $p\ge 0$, $d_0^{p+1} H_p = \operatorname{id}_{X_p}$.
    \item For all $p\ge 1$ and all $i>0$, $d_i^{p+1} H_p = H_{p-1} d_{i-1}^p$.
\end{enumerate}
\end{lemma}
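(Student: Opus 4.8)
The plan is to unwind both identities pointwise, using that the face maps are precomposition with the product cosimplicial coface maps $\Delta_i^p=\prod_a\delta_i^{M_a(p)}$ and that $H_p$ is a coordinatewise shift gated by the vanishing locus $\{m:\exists a,\ m_a=0\}$. The single combinatorial input is the shifted coface identity
\[
\delta_i^{q}(x)-1=\delta_{i-1}^{q-1}(x-1)\qquad(i\ge 1,\ x\ge 1),
\]
together with the two boundary facts $\delta_0^{q}(x)=x+1$ (so $\delta_0$ always lands in the positive range) and $\delta_i^{q}(0)=0$ for $i\ge 1$. All multi-index manipulations stay inside the correct index sets because $M_a(q\pm1)=M_a(q)\pm1$.

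For identity (1), I would fix $T\in X_p$ and $m'\in I_p$ and compute $\bigl(d_0^{p+1}H_p(T)\bigr)(m')=H_p(T)\bigl(\Delta_0^{p+1}(m')\bigr)$. Since $\Delta_0^{p+1}(m')_a=\delta_0^{M_a(p+1)}(m'_a)=m'_a+1>0$ for every $a$, the second branch of $H_p$ applies, giving $T\bigl(\Delta_0^{p+1}(m')-\vec{1}\bigr)=T(m')$. Hence $d_0^{p+1}H_p=\mathrm{id}_{X_p}$.

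For identity (2), fix $i>0$, $T\in X_p$, $m'\in I_p$, and evaluate both sides at $m'$. I would split into two cases. If some coordinate $m'_a=0$: on the left, $\delta_i^{M_a(p+1)}(0)=0$ (as $i>0$), so $\Delta_i^{p+1}(m')$ still has a zero coordinate and $H_p(T)\bigl(\Delta_i^{p+1}(m')\bigr)=0$; on the right, $H_{p-1}$ evaluated at $m'$ vanishes by its defining first branch. If every $m'_a>0$: then each $\delta_i^{M_a(p+1)}(m'_a)$ is positive (it equals $m'_a$ or $m'_a+1$), so the left side is $T\bigl(\Delta_i^{p+1}(m')-\vec{1}\bigr)$, and the shifted coface identity gives $\bigl(\Delta_i^{p+1}(m')-\vec{1}\bigr)_a=\delta_{i-1}^{M_a(p)}(m'_a-1)=\Delta_{i-1}^{p}(m'-\vec{1})_a$; thus the left side equals $T\bigl(\Delta_{i-1}^{p}(m'-\vec{1})\bigr)=\bigl(d_{i-1}^{p}T\bigr)(m'-\vec{1})$, which is exactly $H_{p-1}\bigl(d_{i-1}^{p}T\bigr)(m')$ since all $m'_a>0$. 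In both cases the two sides agree, establishing (2).

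The routine but essential bookkeeping — and the only place care is needed — is checking the shifted coface identity branch-by-branch ($m'_a<i$ versus $m'_a\ge i$) and confirming that $m'-\vec 1\in I_{p-1}$, $\Delta_{i-1}^p(m'-\vec 1)\in I_p$, and $\Delta_i^{p+1}(m')-\vec 1\in I_p$, all of which follow from $[p]\subseteq[M_a(p)]$ and $M_a(p\pm1)=M_a(p)\pm1$. No genuine obstacle arises; the content is entirely in organizing the case split around the vanishing locus of $H$.
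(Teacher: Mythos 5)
Your proof is correct and follows essentially the same approach as the paper: pointwise evaluation, a case split on whether $m'$ has a zero coordinate, and the shifted coface identity $\delta_i^q(x)-1=\delta_{i-1}^{q-1}(x-1)$ for $i\ge 1$, $x\ge 1$. The only difference is that the paper's proof of part (2) also separately checks the boundary case $p=0$, namely $d_1^1 H_0=0$ via Remark~\ref{rem:index_bounds}, because that identity (read as $d_1^1 H_0 = H_{-1}d_0^0 = 0$) is invoked in the proof of Theorem~\ref{thm:contractible}; your argument is complete for the lemma exactly as stated with $p\ge 1$.
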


\begin{proof}
Let $T \in X_p$ and $m \in I_p$.

(1) $d_0 H(T)(m) = H(T)(\Delta_0(m)) = T(m+\vec{1}-\vec{1}) = T(m)$.

(2) Let $i>0$.

\textbf{Case $p>0$:}
If some coordinate $m_a=0$, then $\Delta_i(m)$ also has a zero coordinate (since $i>0$ and $\delta_i(0)=0$), 
so $H_p(T)(\Delta_i(m))=0$ and $H_{p-1}(d_{i-1}T)(m)=0$ by definition of $H$. 
Thus $d_i H_p(T)(m)=H_{p-1} d_{i-1}(T)(m)=0$.

If all coordinates $m_a>0$, then
\[
d_i^{p+1} H_p(T)(m) = H_p(T)(\Delta_i^p(m)) = T(\Delta_i^p(m)-\vec{1}),
\]
while
\[
H_{p-1} d_{i-1}^p(T)(m) = d_{i-1}^p(T)(m-\vec{1})
                         = T(\Delta_{i-1}^p(m-\vec{1})).
\]
The coface maps satisfy the coordinate identity
\[
\delta_i(x)-1 = \delta_{i-1}(x-1)\qquad(x>0),
\]
hence $\Delta_i^p(m)-\vec{1} = \Delta_{i-1}^p(m-\vec{1})$ when all $m_a>0$,
and the two expressions agree.

\textbf{Case $p=0$:} We verify $d_1^1 H_0 = 0$.

Let $m \in I_0$. By Remark \ref{rem:index_bounds}, there exists $a_0$ such that $m_{a_0}=0$.
$d_1 H_0(T)(m) = H_0(T)(\Delta_1(m))$.
The $a_0$-th coordinate of $\Delta_1(m)$ is $\delta_1(m_{a_0}) = \delta_1(0)$. Since $i=1>0$, $\delta_1(0)=0$.
By the definition of $H$, since $\Delta_1(m)$ has a zero coordinate, $H_0(T)(\Delta_1(m))=0$.
Thus $d_1^1 H_0 = 0$.
\end{proof}

\begin{theorem}\label{thm:contractible}
The diagonal simplicial module $X_\bullet(\vec s; A)$ is contractible.
\end{theorem}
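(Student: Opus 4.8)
The plan is to use the standard fact that a contracting chain homotopy forces acyclicity, and so the entire content of the theorem reduces to assembling the identities already established in Lemma~\ref{lem:homotopy_identities} into the single chain-homotopy relation $\partial_{p+1}H_p + H_{p-1}\partial_p = \mathrm{id}_{X_p}$ for all $p \ge 0$. First I would fix $p \ge 0$ and expand the left-hand side using the definition $\partial_p = \sum_{i=0}^p (-1)^i d_i^p$, so that
\[
\partial_{p+1}H_p = \sum_{i=0}^{p+1}(-1)^i d_i^{p+1}H_p
\qquad\text{and}\qquad
H_{p-1}\partial_p = \sum_{i=0}^{p}(-1)^i H_{p-1}d_i^p.
\]
In the first sum I would peel off the $i=0$ term, which by Lemma~\ref{lem:homotopy_identities}(1) equals $\mathrm{id}_{X_p}$, and then reindex the remaining terms $i = 1,\dots,p+1$ via $i \mapsto i+1$; Lemma~\ref{lem:homotopy_identities}(2) rewrites each such term $d_{i+1}^{p+1}H_p$ as $H_{p-1}d_i^p$. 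The sign bookkeeping is the only delicate point: the reindexed term $(-1)^{i+1}d_{i+1}^{p+1}H_p = -(-1)^i H_{p-1}d_i^p$ cancels exactly against the $i$-th term $(+1)^i H_{p-1}d_i^p$ of $H_{p-1}\partial_p$, leaving only $\mathrm{id}_{X_p}$.

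Second, I would address the boundary cases so that the argument is uniform. At $p = 0$ the relation to check is $\partial_1 H_0 + H_{-1}\partial_0 = \mathrm{id}_{X_0}$; since $H_{-1} = 0$ and $\partial_0 = 0$, this is $\partial_1 H_0 = d_0^1 H_0 - d_1^1 H_0 = \mathrm{id}_{X_0} - 0 = \mathrm{id}_{X_0}$, using both parts of Lemma~\ref{lem:homotopy_identities} (part (2) in the $p=0$ form, which is precisely where Remark~\ref{rem:index_bounds} was invoked). For $p \ge 1$ the general computation above applies verbatim, with the observation that the extreme reindexed term at $i = p+1$ becomes $H_{p-1}d_{p+1}^{p+1}H_p$'s partner $H_{p-1}d_p^p$, which is accounted for because $H_{p-1}\partial_p$ genuinely runs up to $i = p$.

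Finally, with the homotopy identity $\partial_{p+1}H_p + H_{p-1}\partial_p = \mathrm{id}_{X_p}$ in hand for all $p \ge 0$, I would conclude that $\mathrm{id}_{X_\bullet}$ is chain-homotopic to the zero map, hence $H_p(X_\bullet) = 0$ for all $p$, i.e.\ $X_\bullet(\vec s;A)$ is contractible. I expect no serious obstacle here: both nontrivial simplicial identities have already been proved in Lemma~\ref{lem:homotopy_identities}, so the remaining work is the purely formal sign-cancellation argument plus a clean treatment of the $p=0$ edge case. The one place to be careful is ensuring the reindexing ranges match exactly — that the $i=0$ term of $\partial_{p+1}H_p$ is the only "extra" term and that every other term finds its cancelling partner — which is why I would write out the two sums explicitly before cancelling rather than appealing to a slogan.
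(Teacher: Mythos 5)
Your proof is correct and is essentially identical to the paper's: both verify $\partial_{p+1}H_p + H_{p-1}\partial_p = \mathrm{id}_{X_p}$ by isolating the $i=0$ term via Lemma~\ref{lem:homotopy_identities}(1), reindexing the remaining terms via Lemma~\ref{lem:homotopy_identities}(2) to produce $-H_{p-1}\partial_p$, and noting the $p=0$ edge case is covered by the conventions $H_{-1}=0$, $\partial_0=0$ together with $d_1^1 H_0=0$. (Your ``$(+1)^i$'' is a typo for ``$(-1)^i$'', but the cancellation you describe is the right one.)
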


\begin{proof}
We verify the chain contraction identity
\[
\partial_{p+1} H_p + H_{p-1} \partial_p = \operatorname{id}_{X_p}
\qquad(p\ge 0).
\]

\textbf{Base case $p=0$.}
Here $\partial_0=0$ and $H_{-1}=0$. Also $\partial_1 = d_0^1 - d_1^1$. Thus
\[
\partial_1 H_0
= (d_0^1 - d_1^1)H_0
= d_0^1 H_0 - d_1^1 H_0
= \operatorname{id}_{X_0} - 0
= \operatorname{id}_{X_0},
\]
using Lemma~\ref{lem:homotopy_identities}(1) for $p=0$ and Lemma~\ref{lem:homotopy_identities}(2) in the special $p=0$ verification (i.e.\ $d_1^1 H_0=0$).
Therefore
\[
\partial_1 H_0 + H_{-1}\partial_0 = \operatorname{id}_{X_0}.
\]

\textbf{Inductive case $p\ge 1$.}
Using Lemma~\ref{lem:homotopy_identities}:
\begin{align*}
\partial_{p+1} H_p
&= d_0^{p+1} H_p + \sum_{i=1}^{p+1} (-1)^i d_i^{p+1} H_p \\
&= \operatorname{id}_{X_p} + \sum_{i=1}^{p+1} (-1)^i H_{p-1} d_{i-1}^p.
\end{align*}
Re-indexing with $j=i-1$ gives
\begin{align*}
\sum_{i=1}^{p+1} (-1)^i H_{p-1} d_{i-1}^p
&= \sum_{j=0}^{p} (-1)^{j+1} H_{p-1} d_j^p \\
&= -\,H_{p-1}\partial_p.
\end{align*}
Substituting back yields
\[
\partial_{p+1} H_p + H_{p-1}\partial_p = \operatorname{id}_{X_p}.
\]
Thus $H$ is a chain contraction of $X_\bullet(\vec s;A)$.
\end{proof}


\subsection{Equivariance Properties of the Homotopy}

Recall the action of the stabilizer group $\operatorname{Stab}(\vec{s})$ on the diagonal simplicial module $X_\bullet(\vec{s};A)$ defined in Section~\ref{sec:dstm}. 
The face and degeneracy maps commute with this action (Lemma~\ref{lem:equivariance-stab}).

\begin{lemma}[Equivariance under $\operatorname{Stab}(\vec{s})$]\label{lem:homotopy_equivariance}
For every $p\ge -1$ and $\sigma\in \operatorname{Stab}(\vec{s})$, the shift-and-truncate homotopy $H_p$ is equivariant with respect to the action of $\operatorname{Stab}(\vec{s})$:
\[
H_p(\sigma\cdot T)=\sigma\cdot H_p(T).
\]
\end{lemma}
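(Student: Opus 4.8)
The plan is to verify the identity pointwise on the index set $I_{p+1}$, using the fact already exploited in the proof of Lemma~\ref{lem:equivariance-stab} that each $\sigma\in\operatorname{Stab}(\vec s)$ satisfies $M_a(p)=M_{\sigma^{-1}(a)}(p)$ for all $a$ and $p$, so that $\sigma$ genuinely permutes $I_{p+1}$. The case $p=-1$ is immediate, since $H_{-1}=0$. For $p\ge 0$ I would fix $T\in X_p$ and $m\in I_{p+1}$ and compare the two functions $H_p(\sigma\cdot T)$ and $\sigma\cdot H_p(T)$ at $m$, unwinding the definitions
\[
(\sigma\cdot H_p(T))(m)=H_p(T)(\sigma^{-1}\cdot m),\qquad
(\sigma\cdot T)(m')=T(\sigma^{-1}\cdot m').
\]

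The key observation is that both ingredients of $H$ — the vanishing condition ``$\exists a:\ m_a=0$'' and the shift $T\mapsto T(m-\vec 1)$ — are manifestly equivariant under coordinate permutations. First I would note that $m$ has a zero coordinate if and only if $\sigma^{-1}\cdot m$ does, since permuting coordinates preserves the multiset of entries; hence in the degenerate case both sides evaluate to $0$ (the left by the definition of $H$ at $m$, the right because $H_p(T)$ vanishes at $\sigma^{-1}\cdot m$). In the non-degenerate case all coordinates of $m$, and therefore of $\sigma^{-1}\cdot m$, are positive, and one computes
\[
(\sigma\cdot H_p(T))(m)=T\bigl((\sigma^{-1}\cdot m)-\vec 1\bigr),\qquad
H_p(\sigma\cdot T)(m)=(\sigma\cdot T)(m-\vec 1)=T\bigl(\sigma^{-1}\cdot(m-\vec 1)\bigr).
\]
These agree because $\vec 1$ is fixed by coordinate permutations and subtraction is componentwise, so $(\sigma^{-1}\cdot m)-\vec 1=\sigma^{-1}\cdot(m-\vec 1)$; this last identity is a one-line check, $\bigl((\sigma^{-1}\cdot m)-\vec 1\bigr)_a=m_{\sigma(a)}-1=\bigl(\sigma^{-1}\cdot(m-\vec 1)\bigr)_a$.

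There is no serious obstacle: the only point requiring care is that $\sigma$ restricts to a well-defined self-map of $I_{p+1}$ — exactly the stabilizer condition used earlier — and that the case distinction in the definition of $H$ is matched on both sides, both of which follow because the defining data of $H$ involve only the coordinatewise partial order on $\mathbb{N}^k$, which is symmetric under $S_k$ and a fortiori under $\operatorname{Stab}(\vec s)$. Combined with Lemma~\ref{lem:equivariance-stab}, this shows that $H$ is a $\operatorname{Stab}(\vec s)$-equivariant chain contraction, and hence restricts to chain contractions on every $\operatorname{Stab}(\vec s)$-stable subcomplex, in particular on $X_\bullet^{\mathrm{Sym}}$ and $X_\bullet^{\mathrm{Alt}}$ in the constant-shape case.
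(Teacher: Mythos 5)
Your proof is correct and follows essentially the same approach as the paper's: unwind the definitions pointwise on $I_{p+1}$, observe that the zero-coordinate condition is permutation-invariant so the case distinction in $H$ is matched on both sides, and use $(\sigma^{-1}\cdot m)-\vec 1=\sigma^{-1}\cdot(m-\vec 1)$ in the non-degenerate case. The extra remarks you add (the trivial $p=-1$ case, and the reminder that the stabilizer condition makes $\sigma$ a self-map of $I_{p+1}$) are harmless and correct.
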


\begin{proof}
Let $T\in X_p$, $\sigma\in \operatorname{Stab}(\vec{s})$, and $m\in I_{p+1}$. We compare $H_p(\sigma\cdot T)(m)$ and $(\sigma\cdot H_p(T))(m)$. By definition of the action, $(\sigma\cdot H_p(T))(m) = H_p(T)(\sigma^{-1}m)$.

The definition of $H_p$ depends on the presence of zero coordinates. 

Case 1: $\exists a: m_a = 0$.
In this case, $H_p(\sigma\cdot T)(m) = 0$. Since $\sigma^{-1}m$ also contains a zero coordinate, $H_p(T)(\sigma^{-1}m) = 0$.

Case 2: $\forall a: m_a > 0$.
In this case, $\sigma^{-1}m$ is also strictly positive.
\begin{align*} H_p(\sigma\cdot T)(m) &= (\sigma\cdot T)(m-\vec{1}) = T(\sigma^{-1}(m-\vec{1})). \end{align*}
On the other hand,
\begin{align*} (\sigma\cdot H_p(T))(m) &= H_p(T)(\sigma^{-1}m) = T((\sigma^{-1}m)-\vec{1}). \end{align*}
The equality holds because the shift operation commutes with permutation: $\sigma^{-1}(m-\vec{1}) = (\sigma^{-1}m)-\vec{1}$.
Therefore, $H_p$ is equivariant under the action of $\operatorname{Stab}(\vec{s})$.
\end{proof}


\subsection{Shifted Depth Filtration and Spectral Sequence}

We introduce a filtration on the complex $(X_\bullet, \partial)$ that is compatible with the differential 
and the homotopy operator $H$, leading to the collapse of the associated spectral sequence.

\begin{definition}[Shifted Depth Filtration]\label{def:shifted_depthF}
For $p\ge 0$ and $t\in\mathbb{Z}$, define a decreasing filtration $\mathcal{F}^\bullet X_p$.
Write $\min(m):=\min_a m_a$ for $m\in I_p$. Set
\[
\mathcal{F}^t X_p\;:=\;\{\,T\in X_p\mid T(m)=0\ \text{unless }\min(m)\ge t+p\,\}.
\]
This filtration is bounded: $\mathcal{F}^{-p}X_p=X_p$ (since $\min(m) \ge 0$). 
Furthermore, since $\min(m) \le \min_a M_a(p)$ for any $m\in I_p$, we have $\mathcal{F}^t X_p=0$ if $t+p > \min_a M_a(p)$.
\end{definition}

\begin{lemma}[Behavior of $\partial$ and $H$ w.r.t.\ $\mathcal{F}^\bullet$]\label{lem:shifted_filtration}
The differential $\partial$ and the homotopy $H$ preserve the shifted depth filtration (filtration degree $0$). For all $t\in\mathbb{Z}$:
\begin{enumerate}
    \item For all $p\ge 1$ and all $i\in\{0,\dots,p\}$,
    \[
    d_i^p(\mathcal{F}^t X_p) \subseteq \mathcal{F}^t X_{p-1}.
    \]
    \item For all $p\ge 0$,
    \[
    H_p(\mathcal{F}^t X_p) \subseteq \mathcal{F}^t X_{p+1}.
    \]
\end{enumerate}
\end{lemma}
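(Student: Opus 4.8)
The plan is to reduce both inclusions to two elementary coordinate facts and then match the degree-dependent threshold $t+p$ against how $\min(m):=\min_a m_a$ transforms under the two operations. First, for any $q$ and $0\le i\le q$ the standard coface $\delta_i^q$ sends $x$ to $x$ or $x+1$, so $\delta_i^q(x)\le x+1$; applying this in each of the $k$ coordinates of $\Delta_i^p=\prod_{a}\delta_i^{M_a(p)}$ gives $\min\bigl(\Delta_i^p(m)\bigr)\le\min(m)+1$ for every $m\in I_{p-1}$. Second, the shift $m\mapsto m-\vec 1$ lowers every coordinate by exactly $1$, so $\min(m-\vec 1)=\min(m)-1$ whenever $m-\vec 1\in I_p$, which holds precisely when every $m_a>0$ — the only case in which $H_p$ is nonzero.

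For part (1), I would take $T\in\mathcal F^tX_p$ and $m\in I_{p-1}$ with $\min(m)<t+(p-1)$. Then $d_i^p(T)(m)=T\bigl(\Delta_i^p(m)\bigr)$ and $\min\bigl(\Delta_i^p(m)\bigr)\le\min(m)+1<t+p$, so $T\bigl(\Delta_i^p(m)\bigr)=0$ because $T$ vanishes on every index whose $\min$ is less than $t+p$. Hence $d_i^p(T)\in\mathcal F^tX_{p-1}$ for all $i$, and since $\partial_p=\sum_i(-1)^id_i^p$ is a linear combination of the $d_i^p$, the differential preserves $\mathcal F^\bullet$ as well.

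For part (2), I would take $T\in\mathcal F^tX_p$ and $m\in I_{p+1}$ with $\min(m)<t+(p+1)$. If some $m_a=0$ then $H_p(T)(m)=0$ by definition of $H$. Otherwise all $m_a>0$, so $m-\vec 1\in I_p$ and $H_p(T)(m)=T(m-\vec 1)$ with $\min(m-\vec 1)=\min(m)-1<t+p$, whence $T(m-\vec 1)=0$ again because $T\in\mathcal F^tX_p$. Thus $H_p(T)\in\mathcal F^tX_{p+1}$.

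Both arguments are pure bookkeeping, and I do not expect a genuine obstacle: the one point to watch is that $\min$ can rise by at most $1$ under a face map (while the threshold $t+p$ drops by $1$ in passing from degree $p$ to $p-1$) and falls by exactly $1$ under $H$ (while the threshold rises by $1$ in passing from $p$ to $p+1$), so in both cases the relevant strict inequalities line up automatically. The boundary conventions $\partial_0=0$, $H_{-1}=0$, and the forced zero coordinate in $I_0$ coming from Remark~\ref{rem:index_bounds} need no separate treatment, since the vanishing statements they would require hold trivially.
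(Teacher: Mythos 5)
Your proposal is correct and matches the paper's proof essentially step for step: both rest on the coordinate bound $\delta_i(x)\le x+1$ (hence $\min(\Delta_i^p(m))\le\min(m)+1$) for part~(1), and on the case split $\min(m)=0$ versus $\min(m)>0$ with $\min(m-\vec 1)=\min(m)-1$ for part~(2). The only addition is your explicit remark that $\partial_p$ preserves the filtration by linearity, which the paper leaves implicit.
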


\begin{proof}
(1) Let $p\ge 1$ and let $T \in \mathcal{F}^t X_p$. We show $d_i^p(T) \in \mathcal{F}^t X_{p-1}$.
Let $m \in I_{p-1}$. If $\min(m) < t+p-1$, then
\[
d_i^p(T)(m) = T(\Delta_i^p(m)).
\]
For each coordinate, the coface map satisfies $\delta_i(x)\le x+1$, hence
\[
\min(\Delta_i^p(m)) \le \min(m)+1.
\]
If $\min(m) < t+p-1$, then $\min(\Delta_i^p(m)) < t+p$, and since $T\in\mathcal{F}^t X_p$ we have
$T(\Delta_i^p(m))=0$. Therefore $d_i^p(T)\in \mathcal{F}^t X_{p-1}$.

(2) Let $p\ge 0$ and let $T \in \mathcal{F}^t X_p$. We show $H_p(T) \in \mathcal{F}^t X_{p+1}$.
Let $m \in I_{p+1}$. If $\min(m)=0$, then $H_p(T)(m)=0$ by definition of $H$.
If $\min(m)>0$, then $H_p(T)(m)=T(m-\vec{1})$. If $\min(m) < t+p+1$, then
\[
\min(m-\vec{1}) = \min(m)-1 < t+p,
\]
so $T(m-\vec{1})=0$ because $T\in\mathcal{F}^t X_p$. Hence $H_p(T)\in \mathcal{F}^t X_{p+1}$.
\end{proof}

\begin{proposition}[Spectral sequence collapses at $E_1$]\label{prop:E1collapse_shifted}
Let $\mathcal{F}^\bullet$ be the shifted depth filtration. The spectral sequence of the filtered complex $(X_\bullet,\partial,\mathcal{F}^\bullet)$ satisfies
\[
E_1^{t,q}=0\qquad\text{for all }t,q.
\]
Hence it collapses at the $E_1$-page.
\end{proposition}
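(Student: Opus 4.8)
The plan is to show that the contracting homotopy $H$ of Theorem~\ref{thm:contractible} descends to the associated graded of the shifted depth filtration, so that the associated graded complex is itself contractible and therefore has vanishing homology; since the $E_1$-page is precisely this homology, it vanishes.

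By Lemma~\ref{lem:shifted_filtration}(1), $\partial$ maps $\mathcal{F}^t X_p$ into $\mathcal{F}^t X_{p-1}$, i.e.\ $\partial$ has filtration degree exactly $0$. Consequently the $E_0$-page of the spectral sequence of $(X_\bullet,\partial,\mathcal{F}^\bullet)$ is, column by column, the associated graded chain complex
\[
\operatorname{gr}^t_{\mathcal{F}} X_\bullet \;=\; \bigl(\mathcal{F}^t X_\bullet/\mathcal{F}^{t+1} X_\bullet,\ \bar\partial\bigr),
\]
where $\bar\partial$ is the differential induced by $\partial$ on these quotients; there is no higher-order correction term to $\bar\partial$ precisely because $\partial$ preserves $\mathcal{F}^\bullet$. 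The entry $E_1^{t,q}$ is, by definition, the homology of this complex in the relevant degree.

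First I would note that $H$ also has filtration degree $0$: by Lemma~\ref{lem:shifted_filtration}(2), $H_p(\mathcal{F}^t X_p)\subseteq\mathcal{F}^t X_{p+1}$. Hence $H$ induces an $A$-linear operator $\bar H$ on $\operatorname{gr}^t_{\mathcal{F}} X_\bullet$ for each $t$. Next I would pass the chain-contraction identity $\partial_{p+1}H_p+H_{p-1}\partial_p=\mathrm{id}_{X_p}$ of Theorem~\ref{thm:contractible} to the quotients: since all three terms preserve $\mathcal{F}^\bullet$, the identity descends to
\[
\bar\partial\,\bar H+\bar H\,\bar\partial=\mathrm{id}
\]
on every $\operatorname{gr}^t_{\mathcal{F}} X_\bullet$. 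Thus $\bar H$ is a contracting homotopy of $\operatorname{gr}^t_{\mathcal{F}} X_\bullet$, so $H_\bullet\bigl(\operatorname{gr}^t_{\mathcal{F}} X_\bullet\bigr)=0$, i.e.\ $E_1^{t,q}=0$ for all $t,q$. Since $E_1=0$, every later page vanishes as well, so the spectral sequence collapses at $E_1$. (As $\mathcal{F}^\bullet$ is bounded by Definition~\ref{def:shifted_depthF}, the spectral sequence converges to $H_\bullet(X_\bullet)$, consistently recovering $H_\bullet(X_\bullet)=0$.)

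I do not expect a genuine obstacle: this is the standard principle that a filtration-preserving contracting homotopy forces the associated graded to be acyclic. The only point requiring care is the identification in Step~2 of $E_0$ with the \emph{full} associated graded complex equipped with the \emph{full} induced differential $\bar\partial$; this hinges on $\partial$ having filtration degree exactly $0$ — exactly Lemma~\ref{lem:shifted_filtration}(1) — so that no part of $\partial$ contributes to the $d_1$ (or higher) differential and the collapse argument applies already at $E_1$.
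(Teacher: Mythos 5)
Your proposal is correct and follows essentially the same argument as the paper: both use Lemma~\ref{lem:shifted_filtration} to see that $\partial$ and $H$ preserve the filtration, descend the chain-contraction identity $\partial H + H\partial = \mathrm{id}$ to the associated graded $E_0$-page, and conclude that $E_1 = H(E_0) = 0$. The only difference is expository (your added remark on boundedness and convergence), not mathematical.
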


\begin{proof}

By Lemma~\ref{lem:shifted_filtration}, for $p\ge 1$ the differential $\partial_p$ has filtration degree $0$, and since $\partial_0=0$ the filtered complex $(X_\bullet,\partial,\mathcal{F}^\bullet)$ is well-defined.

Let $E_0 = \operatorname{gr}_{\mathcal{F}}(X_\bullet)$ be the associated graded complex, with differential $d^0$ induced by $\partial$.

By Lemma \ref{lem:shifted_filtration}, the homotopy operator $H$ also has filtration degree 0. It induces a map $[H]$ on $E_0$.
The chain contraction identity $\partial H+H\partial=\operatorname{id}$ holds in the filtered complex. Since all operators involved preserve the filtration, the identity descends to the associated graded complex:
\[
d^0 [H] + [H] d^0 = [\operatorname{id}] = \operatorname{id}_{E_0}.
\]
This shows that the complex $(E_0, d^0)$ is contractible via the homotopy $[H]$. Therefore, its homology is trivial.
The $E_1$ term is defined as the homology of $(E_0, d^0)$. We conclude $E_1 = H(E_0) = 0$.
\end{proof}

\bibliographystyle{elsarticle-num}
\bibliography{dstm}

\end{document}